\newtheorem{theorem}{Theorem}
\theoremstyle{plain}
\newtheorem{proposition}[theorem]{Proposition}
\theoremstyle{plain}
\newtheorem{corollary}[theorem]{Corollary}
\theoremstyle{remark}
\newtheorem*{remark*}{Remark}
\theoremstyle{remark}
\newtheorem{remark}{Remark}
\theoremstyle{plain}
\newtheorem{lemma}[theorem]{Lemma}
\theoremstyle{definition}
\newtheorem{example}[theorem]{Example}
\theoremstyle{definition}
\newtheorem{definition}[theorem]{Definition}
\theoremstyle{plain}
\newtheorem*{conjecture*}{Conjecture}
\numberwithin{theorem}{section}
\numberwithin{equation}{section}
\numberwithin{figure}{section}
\newcommand{\vfromk}[2]{\tensor*[^{#2}]{\vec{\mathbf{#1}}}{}}
\newcommand{\vintok}[2]{\tensor*{\vec{\mathbf{#1}}}{^{{#2}}}}
\DeclareMathOperator{\adj}{adj}
\DeclareMathOperator{\coll}{coll}
\DeclareMathOperator{\Tr}{Tr}
\DeclareMathOperator{\am}{AM}
\DeclareMathOperator{\gm}{GM}
\DeclareRobustCommand{\shortto}{%
  \mathrel{\mathpalette\short@to\relax}%
}
\newcommand{\short@to}[2]{%
  \mkern2mu
  \clipbox{{.5\width} 0 0 0}{$\m@th#1\vphantom{+}{\shortrightarrow}$}%
  }
\newcommand{\rr}[1]{\texorpdfstring{{\color{blue}#1}}{}}
\bfseries\mathversion{bold}$,
\bfseries\mathversion{bold}$\rotate
\begin{document}

\title{Geometric multiplicity of unitary non-backtracking eigenvalues}

\author{
Leo Torres \\
Nora Institute \\
\url{leo@leotrs.com}
}
\date{}

\maketitle

\begin{abstract}{
We characterize the conditions under which a complex unitary number is an eigenvalue of the non-backtracking matrix of an undirected graph.
Further, we provide a closed formula to compute its geometric multiplicity and an algorithm to compute this multiplicity without making a single matrix computation.
The algorithm has time complexity that is linear in the size of the graph.

\textbf{Keywords:} spectral graph theory, non-backtracking, cycle structure, graph algorithms

\textbf{MSC:} 
05C50, 05C82, 05C85, 15A18, 15B99
}
\end{abstract}

\begingroup
\hypersetup{hidelinks}
\setcounter{tocdepth}{2}
\tableofcontents
\endgroup

%%%%%%%%%%%%%%%%%%%%%%%%%%%%%%%%%%%%%%%%%%%%%%%%%%%%%%%%%%%%%%%%

%\part{Introduction and the real case}
\newpage
\section{Introduction}\label{sec:intro}

\rr{Given an undirected graph $G$, its \emph{non-backtracking matrix} is a binary matrix that encodes the incidence of oriented edges of $G$;
we give a formal definition in the next Section.}
It was first introduced by Hashimoto in the study of Zeta functions of graphs \cite{hashimoto1989zeta}.
He and others proved that the poles of the Zeta function of a graph coincide with the reciprocals of the eigenvalues of the non-backtracking (NB) matrix \cite{bass1992ihara,terras2010zeta}.
Subsequently, the NB matrix became a point of focus of modern spectral graph theory.
It is now known that the eigenvalues of the NB matrix (or NB eigenvalues for short), independently of their relationship to the Zeta function, contain rich information about the structure of the underlying graph, and have been used in multiple applications and theoretical settings \cite{bordenave2018nonbacktracking,cooper2009properties,torres2021nonbacktracking,Torres2019}.

\rr{Since the NB matrix is binary}, Perron-Frobenius theory can describe the eigenvalues $\lambda$ that have maximal magnitude.
At the other end of the spectrum, the eigenvalue $\lambda = 0$ can be shown to be related to the existence of nodes of degree $1$, or more generally, tree-like subgraphs.
Up to now, these were the only two families of NB eigenvalues whose properties have been fully described for any arbitrary graph.
The difficulty in making general claims about the NB eigenvalues is that, in contrast to the classical matrices studied in spectral graph theory, the NB matrix is not symmetric, in fact it is not even normal.
Thus most of the existing tools from spectral graph theory do not apply to it.

Here, we study a third family of eigenvalues, namely those complex numbers on the unit circle, and fully describe how they are related to the structure of the graph.
First, we show that if a complex unitary number is a NB eigenvalue then it must be a root of unity.
Then, we characterize the subgraphs that act as the support of eigenfunctions corresponding to unitary eigenvalues, and describe all possible ways in which these subgraphs may be connected to the rest of the graph.
Finally, we derive closed formulas that count the geometric multiplicity of unitary NB eigenvalues, and we design an algorithm to compute these formulas efficiently.

\rr{
Beyond providing a more complete picture of how the NB eigenvalues are related to the structure of a graph, the particular topic of multiplicity is of independent interest.
Indeed, it is known that certain ensembles of random matrices have simple spectrum \cite{tao2017random}, and these include the adjacency matrices of certain random graphs.
However, our results on the multiplicity of unitary NB eigenvalues explicitly prevent any such results to be extended to the NB matrices of random graphs in full generality.
At the very least, they point to the fact that the unitary NB eigenvalues could be treated as a special case in such situations. 
Furthermore, our algorithm to compute the geometric multiplicity of all unitary eigenvalues is more efficient than standard eigensolvers and can thus be used alongside other standard tools to compute the full NB spectrum of a graph more efficiently.}

The rest of this manuscript is structured as follows.
In \Cref{sec:background} we cover all necessary preliminaries.
In \Cref{sec:real} we discuss the real case, that is, we describe the NB eigenvalues $\lambda = \pm 1$ and compute their geometric and algebraic multiplicities.
Subsequent sections are dedicated solely to the nonreal case.
\Cref{sec:eigenfunctions} establishes the first properties of nonreal unitary eigenvalues and their eigenfunctions, and we show that a subgraph that is the support of an eigenfunction must always be a \emph{graph subdivision}.
The $r$-subdivision of a graph $G$ is built by replacing each edge in $G$ by a chain of $r$ edges, joined by nodes of degree $2$.
\Cref{sec:subdivisions} studies the special case of graph subdivisions.
In \Cref{sec:gluing} we determine all possible ways in which a graph subdivision may be \emph{glued} to some other graph while leaving the unitary NB eigenfunctions intact, and we argue this procedure is the only mechanism by which unitary numbers appear in the NB spectrum of any graph.
Finally, \Cref{sec:geometric} states and proves our main results.

%%%%%%%%%%%%%%%%%%%%%%%%%%%%%%%%%%%%%%%%%%%%%%%%%%%%%%%%%%%%%%%%
\section{Background}\label{sec:background}
\paragraph*{Conventions and notation}
A graph $G$ is comprised of a set of nodes $V(G)$ and a set of edges $E(G)$.
All graphs considered are finite, undirected, unweighted, connected, and possibly contain self-loops or multi-edges.
The degree of node $v \in V(G)$ is the number of edges that contain it and is denoted $d_v$.
A self-loop $(v, v) \in E(G)$ adds two units to the degree.

If a complex number $z$ satisfies $|z| = 1$, we say it is \emph{unitary} (a.k.a. \emph{unimodular}).
Given a unitary $\lambda$, if there exists an integer $q$ such that $\lambda^q = 1$ then $\lambda$ is said to be a $q$th root of unity.
The smallest positive such $q$ is the \emph{order of primitivity} of $\lambda$, and in this case $\lambda$ is a \emph{primitive $q$th root} of unity.

\paragraph*{Multiplicity of eigenvalues}
Let $\mathbf{A}$ be a square matrix and let $(\lambda, \mathbf{v})$ be a pair such that $\mathbf{A v} = \lambda \mathbf{v}$.
The \emph{algebraic multiplicity} of $\lambda$, denoted $\am_{\mathbf{A}}(\lambda)$, is the multiplicity of $\lambda$ as a root of the polynomial $\det \left( \mathbf{A} - t \mathbf{I} \right)$.
The \emph{geometric multiplicity} of $\lambda$, denoted $\gm_{\mathbf{A}}(\lambda)$, is the dimension of the nullspace of $\left( \mathbf{A} - \lambda \mathbf{I} \right)$.
It is always the case that $\am_{\mathbf{A}}(\lambda) \geq \gm_{\mathbf{A}}(\lambda)$.
If $G$ is a graph, we write $\am_{G}(\lambda), \gm_{G}(\lambda)$ to refer to the multiplicities of $\lambda$ as an eigenvalue of the non-backtracking matrix of $G$.
We drop the subscript when clear from context.

\paragraph*{NB walks and NB matrix}
Consider a graph $G$ with $n$ nodes and $m$ edges.
Let $\bar{E} $ be the set of \emph{oriented} edges, i.e., if nodes $u$ and $v$ are neighbors in $G$, both oriented edges $u \to v$ and $v \to u$ are members of $\bar{E}$.
A \emph{walk} in $G$ is a sequence of oriented edges that are consecutively adjacent.
The \emph{length} of a walk is the number of oriented edges in it.
A walk of the type $u \to v \to u$ is called a \emph{backtrack}.
A \emph{non-backtracking} walk, or NB walk, is a walk that does not contain any backtracks.
The \emph{NB matrix} of $G$ is denoted by $\mathbf{B}$ and can be understood as the adjacency matrix of its oriented edges, ignoring backtracks.
Concretely, $\mathbf{B}$ is indexed in the rows and columns by $\bar{E}$, \rr{where we fix an arbitrary ordering of the edges}, and is defined as
\begin{equation}\label{eqn:def-b}
\mathbf{B}_{k \to l, i \to j} = \mathbf{1}\{j=k\} \left( 1 - \mathbf{1}\{i=l\} \right).
\end{equation}
In words, $\mathbf{B}_{k \to l, i \to j}$ is one whenever $k$ equals $j$ and the walk $i \to j = k \to l$ is a valid walk of length 2 that is not a backtrack, and it is zero otherwise.
Importantly, the quantity $\mathbf{B}^r_{k \to l, i \to j}$ is equal to the number of NB walks starting with $i \to j$ and ending with $k \to l$ of length $r + 1$.\footnote{
The notation $\mathbf{B}_{k \to l, i \to j}$ is, strictly speaking, unambiguous only when the graph simple, i.e., when it contains no self-loops or multi-edges.
When the graph is not simple, we should instead use the following notation: for an oriented edge $e \in \bar{E}$, write $s(e)$ and $t(e)$ for its source and target endpoints.
Then, the NB matrix is indexed as $\mathbf{B}_{e,e'}$ or as $\mathbf{B}_{s(e) \to t(e), s(e') \to t(e')}$.
We use the former notation, so as to not complicate our formulas with multiple repetitions of $s(\cdot)$ and $t(\cdot)$.
Notational choices notwithstanding, we reiterate that every result in this manuscript applies regardless of whether or not the graph is simple.
}

Let $\mathbf{v}$ be a function defined on $\bar{E}$ and let $\mathbf{A} = \left( a_{ij} \right)$ be the adjacency matrix of $G$.
In this case, \cref{eqn:def-b} yields (see \cref{fig:nbm-doodle})
\begin{equation}\label{eqn:operator}
\left( \mathbf{B v} \right)_{k \to l} = \sum_{i} a_{ij} \mathbf{v}_{i \to k} - \mathbf{v}_{l \to k}.
\end{equation}

\begin{figure}[t]
\centering
\includegraphics[width=0.82\textwidth]{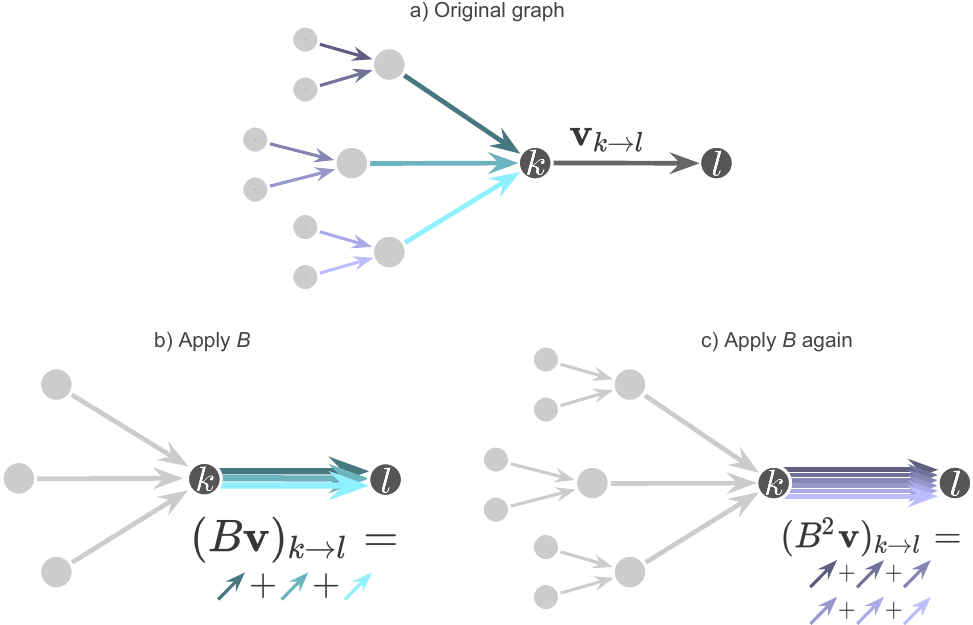}
\caption{\label{fig:nbm-doodle}
\textbf{Action of non-backtracking matrix.}
\textbf{a)} Edge color represents the value assigned to that directed edge by the function $\mathbf{v}$.
\textbf{b)} $\mathbf{B v}$ aggregates the values along all incoming edges, except for the backtrack $\mathbf{v}_{l \to k}$.
Overlapping arrows represent the sum of the corresponding color-coded quantities.
\textbf{c)} $\mathbf{B}^{2} \mathbf{v}$ aggregates the values along all NB walks of length $3$.}
\end{figure}
The first term in the right-hand side will appear many times in our discussion so we define, for any function $\mathbf{v}$ and node $k$,
\begin{equation}\label{eqn:def-vintok}
\vintok{v}{k} \coloneqq \sum_{i} a_{ik} \mathbf{v}_{i \to k}
\quad
\text{and}
\quad
\vfromk{v}{k} \coloneqq \sum_{i} a_{ik} \mathbf{v}_{k \to i},
\end{equation}
which we pronounce ``\emph{$\mathbf{v}$ into k}'' and ``\emph{$\mathbf{v}$ from k}'', respectively.

\paragraph*{NB eigenvalues and eigenfunctions} 
Let $\mathbf{D}$ be the diagonal degree matrix.
Then, the \emph{Ihara determinant formula} (\cite{angel2007non,kotani}) states
\begin{equation}\label{eqn:ihara}
\det \left( I - t \mathbf{B} \right) = \left( 1 - t^2 \right)^{m - n} \det \left( I - t \mathbf{A} + t^2 \left( \mathbf{D} - \mathbf{I} \right) \right).
\end{equation}
Note the left-hand side of \cref{eqn:ihara} is the characteristic polynomial of $\mathbf{B}$ evaluated at $1/t$.
Importantly, the Ihara determinant formula works on any (simple or non-simple) graph even if it is disconnected, though we continue to assume connectedness for the sake of simplicity.

Now suppose $\mathbf{v}$ is an eigenfunction of $\mathbf{B}$ with eigenvalue $\lambda$.
Then \cref{eqn:operator} yields
\begin{equation}\label{eqn:evector}
\lambda \mathbf{v}_{k \to l} + \mathbf{v}_{l \to k} = \vintok{v}{k}.
\end{equation}
Since $\mathbf{B}$ is a non-negative matrix, Perron-Frobenius theory implies that when it is also irreducible, it admits a simple, real and positive eigenvalue whose corresponding eigenfunction has all positive entries.
This is called the Perron eigenfunction.
If $\mathbf{v}$ is the Perron eigenfunction of $\mathbf{B}$, the quantity $\vintok{v}{k}$ is called \textit{non-backtracking centrality} of node $k$ by some authors \cite{martin2014localization}.

It is known (see \cite{Grindrod2018,torres2020non}) that the nodes of degree $1$ do not affect the nonzero part of the spectrum, thus in the rest of the manuscript we assume that each node in $G$ has degree greater than $1$.

\begin{remark}[\textbf{Circle graphs}]\label{rmk:circle}
The special case when $G$ is a circle graph is important for our later discussion.
Suppose $G$ is a circle graph of length $r$.
In this case $\mathbf{B}$ is the direct sum of two cyclic permutations going around the cycle in opposite directions.
Hence, the NB eigenvalues are precisely the $r$th roots of unity, each with multiplicity two, and there exists a full basis of eigenfunctions.
\end{remark}

%%%%%%%%%%%%%%%%%%%%%%%%%%%%%%%%%%%%%%%%%%%%%%%%%%%%%%%%%%%%%%%%
\section{The real case}\label{sec:real}
We begin our discussion with the real roots of unity, namely $\pm1$.
The algebraic multiplicity of $\pm1$ as an NB eigenvalue is well-known \cite{Grindrod2018,hashimoto1989zeta,northshield1998}, though we include the full derivations here for the sake of completeness. % and because the author has arrived at them independently.
The geometric multiplicities are a novel result, as far as the author can tell, though \cite{horton2006ihara} studies the eigenfunctions of $+1$.

Consider a graph $G$ with $n$ nodes and $m$ edges.
Recall we assume that $G$ is connected and contains no nodes of degree $1$.
In particular, $G$ is not a forest, and we have $m \geq n$, with equality if and only if $G$ is a circle graph.
Let $\mathbf{B}$ be the NB matrix of $G$.
\Cref{eqn:ihara} implies that $\pm 1$ are always eigenvalues of $\mathbf{B}$, each with algebraic multiplicity at least $m - n$.
Here we establish that $\am(1) = \gm(1) = m - n + 1$, and $\am(-1) = \gm(-1) = m - n + 1$ when $G$ is bipartite and $\am(-1) = \gm(-1) = m - n$ otherwise.

%%%%%%%%%%%%%%%%%%%%%%%%%%%%%%%%%%%%%%%%%%%%%%%%%%%%%%%%%%%%%%%%
\subsection{Algebraic multiplicity}\label{sec:real-alg}

Recall that $\mathbf{D-A}$ is called the \emph{Laplacian }matrix of $G$, which is always singular, and whose rank is $n-1$ if and only if the graph is connected.
An argument closely related to the following \rr{two proofs} can be found in \cite{Grindrod2018,northshield1998}. 

\begin{proposition}
\label{pro:am-plus1}Let $G$ have at least two cycles. Then $\am(1)=m-n+1$. 
\end{proposition}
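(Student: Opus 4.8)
The plan is to use the Ihara determinant formula \cref{eqn:ihara} to reduce the computation of $\am(1)$ to a statement about the polynomial $p(t) = \det\!\left(I - t\mathbf{A} + t^2(\mathbf{D}-\mathbf{I})\right)$ near $t = 1$. By \cref{eqn:ihara}, the left-hand side $\det(I - t\mathbf{B})$ is the characteristic polynomial of $\mathbf{B}$ evaluated at $1/t$, so the multiplicity of $t = 1$ as a root of $\det(I - t\mathbf{B})$ equals $\am_{\mathbf{B}}(1)$. On the right-hand side, $(1-t^2)^{m-n}$ contributes a zero of order exactly $m - n$ at $t = 1$ (recall $m \geq n$, with equality only for the circle graph, which is excluded since $G$ has at least two cycles, so $m > n$ and the exponent is positive — but I should also handle $m = n$ carefully; here $m - n \geq 1$). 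So it remains to show that $p(1) = 0$ and $p'(1) \neq 0$, i.e.\ that $t = 1$ is a simple root of $p$.

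First I would evaluate $p(1)$: substituting $t = 1$ gives $p(1) = \det\!\left(I - \mathbf{A} + \mathbf{D} - \mathbf{I}\right) = \det(\mathbf{D} - \mathbf{A})$, which is the determinant of the Laplacian, hence $0$ since $G$ is connected. So $t = 1$ is a root of $p$ of multiplicity at least one, giving $\am(1) \geq m - n + 1$. The substance of the argument is the reverse inequality: showing the root is simple. For this I would write $p(t) = \det M(t)$ with $M(t) = I - t\mathbf{A} + t^2(\mathbf{D}-\mathbf{I})$ and use Jacobi's formula for the derivative of a determinant: $p'(t) = \Tr\!\left(\adj(M(t))\, M'(t)\right)$, where $M'(t) = -\mathbf{A} + 2t(\mathbf{D}-\mathbf{I})$. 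At $t = 1$ we have $M(1) = \mathbf{D} - \mathbf{A}$ (the Laplacian, rank $n - 1$), so $\adj(M(1))$ is a rank-one matrix; since the Laplacian is symmetric with one-dimensional kernel spanned by the all-ones vector $\mathbf{1}$, we get $\adj(M(1)) = c\,\mathbf{1}\mathbf{1}^{\top}$ for some constant $c$, and $c \neq 0$ precisely because the rank is exactly $n - 1$ (here connectedness is essential, and this is where the hypothesis gets used — for a disconnected graph the adjugate would vanish and the root would not be simple). Then $p'(1) = c\,\mathbf{1}^{\top} M'(1)\, \mathbf{1} = c\,\mathbf{1}^{\top}\!\left(-\mathbf{A} + 2(\mathbf{D}-\mathbf{I})\right)\mathbf{1}$.

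The last step is to compute $\mathbf{1}^{\top}(-\mathbf{A} + 2\mathbf{D} - 2\mathbf{I})\mathbf{1}$. We have $\mathbf{1}^{\top}\mathbf{A}\mathbf{1} = \sum_v d_v = 2m$, $\mathbf{1}^{\top}\mathbf{D}\mathbf{1} = \sum_v d_v = 2m$, and $\mathbf{1}^{\top}\mathbf{I}\mathbf{1} = n$, so the expression equals $-2m + 4m - 2n = 2m - 2n = 2(m-n) > 0$ under the hypothesis $m > n$. Hence $p'(1) = 2c(m-n) \neq 0$, so $t = 1$ is a simple root of $p$, and therefore $\am(1) = (m - n) + 1 = m - n + 1$, as claimed.

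The main obstacle I anticipate is the middle step: justifying that $\adj(\mathbf{D} - \mathbf{A})$ is a nonzero scalar multiple of $\mathbf{1}\mathbf{1}^{\top}$ and correctly tracking that the scalar is nonzero. This is exactly the classical Matrix-Tree-type fact that a connected graph's Laplacian has all cofactors equal and nonzero (equal to the number of spanning trees up to sign); the cleanest way to present it is to note that for a symmetric matrix of corank one with kernel spanned by a unit-norm vector $u$, the adjugate equals $(\prod_{i} \mu_i)\, u u^{\top}$ where the $\mu_i$ are the nonzero eigenvalues, all of which are positive for the Laplacian, so $c > 0$ outright — which incidentally also pins down the sign in $p'(1) > 0$. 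A minor secondary point is confirming the exponent $m - n$ in $(1-t^2)^{m-n}$ is genuinely the exact order of vanishing at $t=1$ contributed by that factor and does not interact with any further vanishing of $p$; since we prove $p(1)=0$ with $p'(1)\neq 0$, the total order is exactly $(m-n)+1$ and no cancellation occurs.
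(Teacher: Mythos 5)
Your proposal is correct and follows essentially the same route as the paper: evaluate the Ihara factor $\det\bigl(\mathbf{I} - t\mathbf{A} + t^{2}(\mathbf{D}-\mathbf{I})\bigr)$ at $t=1$ to get the singular Laplacian, then use Jacobi's formula together with $\adj(\mathbf{D}-\mathbf{A}) = \eta\,\mathbf{1}\mathbf{1}^{\top}$ (nonzero by connectedness) to compute the derivative as $2\eta(m-n) \neq 0$, so the root at $t=1$ is simple beyond the $(1-t^{2})^{m-n}$ factor. Your additional remark that $\eta > 0$ via the positive nonzero Laplacian eigenvalues is a small refinement the paper does not need, but the argument is the same.
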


\begin{proof}
Define $f(t) \coloneqq \det \left(\mathbf{I} - t \mathbf{A} + t^{2}(\mathbf{D-I})\right)$ and observe that $f(1) = \det \left( \mathbf{D - A} \right) = 0$.
Therefore, $f(t) = (t-1) g(t)$ for some $g(t)$.
This fact, together with \cref{eqn:ihara} imply $\am(1) \geq m - n + 1$.
Thus, showing $g(1) \neq 0$ finishes the proof.
First, note that $g(1)$ equals $f'(1)$.
The so-called Jacobi formula shows $f'(1) = \Tr \left[ \adj \left( \mathbf{D - A} \right) \left( \mathbf{D - A + D} - 2\mathbf{I} \right) \right]$; see e.g. eq. (41) of \cite{petersen2012matrix}.
Further, well-known properties of the adjugate show that $\adj \left( \mathbf{D - A} \right) = \eta \mathbf{1} \mathbf{1}^{\top}$ for some nonzero $\eta$; see e.g. \cite{merris1994laplacian} and section 0.8.2 of \cite{horn2012matrix}.
Finally, observe that $m > n$ since $G$ is not a circle graph.
All together, we have 
\begin{align}
g(1) = f'(1)
 & = \Tr \left[ \adj \left( \mathbf{D - A} \right) \left( \mathbf{D - A + D} - 2 \mathbf{I}\right) \right]  \\
 & = \eta \Tr \left[ \mathbf{1} \mathbf{1}^{\top} \left( \mathbf{D - A} \right) + \mathbf{11}^{\top} \left( \mathbf{D} - 2 \mathbf{I} \right) \right] \nonumber \\
 & = \eta \mathbf{1}^{\top} \left( \mathbf{D - A} \right) \mathbf{1} + \eta \mathbf{1}^{\top} \left( \mathbf{D} - 2 \mathbf{I} \right) \mathbf{1} \nonumber \\
 & = \eta \left(2m-2n\right) \neq 0 \nonumber \qedhere
\end{align}
\end{proof}

For our treatment of the eigenvalue $-1$, recall that $\mathbf{D+A}$ is called the \emph{signless Laplacian} of $G$.
%The proof of the following proposition is similar that of the previous one, though it depends on the relationship between the signless Laplacian and the bipartitivity of the graph.
\begin{proposition}\label{pro:am-minus1}
Let $G$ have at least two cycles.
Then $\am(-1) = m - n + \mathbf{1}\{G \text{ is bipartite}\}$. 
\end{proposition}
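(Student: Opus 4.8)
The plan is to mirror the structure of the proof of \Cref{pro:am-plus1}, replacing the Laplacian $\mathbf{D-A}$ by the signless Laplacian $\mathbf{D+A}$. As before, set $f(t) \coloneqq \det\left(\mathbf{I} - t\mathbf{A} + t^2(\mathbf{D-I})\right)$ and note that $f(-1) = \det(\mathbf{I} + \mathbf{A} + \mathbf{D} - \mathbf{I}) = \det(\mathbf{D+A})$. Unlike the ordinary Laplacian, the signless Laplacian is \emph{not} automatically singular: it is a standard fact that for a connected graph, $\operatorname{rank}(\mathbf{D+A}) = n-1$ if $G$ is bipartite and $\operatorname{rank}(\mathbf{D+A}) = n$ otherwise (the kernel, when $G$ is bipartite, is spanned by the $\pm1$ vector that is $+1$ on one side of the bipartition and $-1$ on the other). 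So the argument splits into two cases.

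In the non-bipartite case, $\det(\mathbf{D+A}) \neq 0$, so $f(-1)\neq 0$, and \Cref{eqn:ihara} immediately gives $\am(-1) = m-n$: the factor $(1-t^2)^{m-n}$ contributes exactly $m-n$ to the order of vanishing of $\det(\mathbf{I}-t\mathbf{B})$ at $t=-1$, and the determinant factor contributes nothing. In the bipartite case, $f(-1)=0$, so we can write $f(t) = (t+1)h(t)$ for some polynomial $h$, and \Cref{eqn:ihara} gives $\am(-1) \geq m-n+1$; it remains to show $h(-1)\neq 0$, i.e. $f'(-1)\neq 0$. By the Jacobi formula, $f'(-1) = \Tr\left[\adj(\mathbf{D+A})\,\tfrac{d}{dt}\big(\mathbf{I}-t\mathbf{A}+t^2(\mathbf{D-I})\big)\big|_{t=-1}\right] = \Tr\left[\adj(\mathbf{D+A})\,\big(-\mathbf{A} - 2(\mathbf{D-I})\big)\right] = -\Tr\left[\adj(\mathbf{D+A})(\mathbf{D+A} + \mathbf{D} - 2\mathbf{I})\right]$. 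Since $\mathbf{D+A}$ has rank $n-1$ in the bipartite case, its adjugate has rank $1$ and is a scalar multiple of $\mathbf{x}\mathbf{x}^\top$ where $\mathbf{x}$ is the signed indicator vector of the bipartition (this is the analogue of $\adj(\mathbf{D-A}) = \eta\mathbf{1}\mathbf{1}^\top$, using the facts that $\adj(M)$ annihilates and is annihilated by $M$ when $\operatorname{rank} M = n-1$, and that $\mathbf{D+A}$ is symmetric). Writing $\adj(\mathbf{D+A}) = \eta\, \mathbf{x}\mathbf{x}^\top$ with $\eta\neq 0$, the first trace term vanishes because $(\mathbf{D+A})\mathbf{x}=\mathbf{0}$, leaving $f'(-1) = -\eta\,\mathbf{x}^\top(\mathbf{D}-2\mathbf{I})\mathbf{x} = -\eta\sum_v (d_v - 2)x_v^2 = -\eta\sum_v(d_v-2)$, since $x_v^2 = 1$ for all $v$. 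Because $G$ is connected, is not a circle graph, and has no degree-$1$ nodes, we have $\sum_v d_v = 2m > 2n$, so $\sum_v(d_v-2) = 2m-2n > 0$, giving $f'(-1) = -\eta(2m-2n)\neq 0$.

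The main obstacle is the determination of $\adj(\mathbf{D+A})$ in the bipartite case: whereas $\adj(\mathbf{D-A}) = \eta\mathbf{1}\mathbf{1}^\top$ is textbook, the claim that $\adj(\mathbf{D+A})$ is a scalar multiple of $\mathbf{x}\mathbf{x}^\top$ needs the observation that for any symmetric matrix $M$ of corank $1$, the adjugate equals $(\text{scalar})\cdot\mathbf{z}\mathbf{z}^\top$ where $\mathbf{z}$ spans $\ker M$, with the scalar nonzero iff $\operatorname{rank} M = n-1$ exactly — and then identifying $\ker(\mathbf{D+A})$ with the span of the signed bipartition vector. Once that is in hand, everything else is a short computation parallel to \Cref{pro:am-plus1}. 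One should also double-check the sign bookkeeping in the Jacobi formula (the derivative of $\mathbf{I}-t\mathbf{A}+t^2(\mathbf{D-I})$ at $t=-1$ is $-\mathbf{A}-2(\mathbf{D-I})$), but the nonvanishing conclusion is insensitive to the overall sign of $\eta$.
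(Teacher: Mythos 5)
Your proposal is correct and follows essentially the same route as the paper: factor $f(t)=\det(\mathbf{I}-t\mathbf{A}+t^2(\mathbf{D-I}))$ at $t=-1$, use singularity of the signless Laplacian to split on bipartiteness, and in the bipartite case apply the Jacobi formula with $\adj(\mathbf{D+A})=\eta\,\mathbf{x}\mathbf{x}^{\top}$ for the signed bipartition vector, yielding $f'(-1)=\pm\eta(2m-2n)\neq 0$. Your extra care in justifying that $\eta\neq0$ (via $\operatorname{rank}(\mathbf{D+A})=n-1$ for connected bipartite graphs) is a slight sharpening of the paper's wording but not a different argument.
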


\begin{proof}
Define $f(t)$ as in Proposition \ref{pro:am-plus1} and observe that $f(-1) = \det \left( \mathbf{D + A} \right)$.
It is known that $\mathbf{D + A}$ is singular if and only if $G$ is bipartite \cite{cvetkovic2007signless}.
Therefore, if $G$ is not bipartite, then $f(-1) \neq 0$ and $\am(-1)=m-n$ due to \cref{eqn:ihara}.
If $G$ is bipartite, we have $f(-1) = 0$ and thus we can write $f(t) = (1 + t) g(t)$.
To finish, we show $g(-1) \neq 0$.
Let the partition of the node set be $U_{1}$ and $U_{2}$ and define the node function $\mathbf{u}$ by putting $\mathbf{u}_{i}=1$ if $i \in U_{1}$ and $\mathbf{u}_{j} = -1$ if $j\in U_{2}$.
One can check that $\left( \mathbf{D + A}\right) \mathbf{u} = 0$, which implies $\adj \left( \mathbf{D + A} \right) = \eta \mathbf{u} \mathbf{u}^{\top}$ for some scalar $\eta$.
Finally, a similar procedure as in Proposition \ref{pro:am-plus1} shows that 
\begin{equation}
g(-1) = f'(-1) = \Tr \left[ \adj \left( \mathbf{D + A} \right) \left( 2\mathbf{I - D} \right) \right] = \eta \mathbf{u}^{\top} \left( 2 \mathbf{I - D} \right) \mathbf{u} = \eta \left( 2n - 2m \right) \neq 0. \qedhere
\end{equation}
\end{proof}

%%%%%%%%%%%%%%%%%%%%%%%%%%%%%%%%%%%%%%%%%%%%%%%%%%%%%%%%%%%%%%%%
\subsection{Geometric multiplicity}\label{sec:real-geo}

\begin{proposition}
\label{pro:gm-plus1}Let $G$ have at least two cycles.
Then $\gm(1) = m - n + 1$. 
\end{proposition}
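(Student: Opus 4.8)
The plan is to pair the free upper bound from \Cref{pro:am-plus1} with a matching lower bound coming from an explicit eigenspace. Because geometric multiplicity never exceeds algebraic multiplicity, \Cref{pro:am-plus1} already yields $\gm(1) \le \am(1) = m-n+1$; hence the whole content of the proposition is the reverse inequality $\gm(1) \ge m-n+1$, that is, producing $m-n+1$ linearly independent eigenfunctions of $\mathbf{B}$ for the eigenvalue $1$.

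To identify candidates, set $\lambda = 1$ in \cref{eqn:evector}: an eigenfunction $\mathbf{v}$ must satisfy $\mathbf{v}_{k\to l} + \mathbf{v}_{l\to k} = \vintok{v}{k}$ for every oriented edge $k \to l$. Since the right-hand side is independent of $l$, the left-hand side is the same for all neighbours $l$ of $k$; writing the identity along $l \to k$ as well and using that $G$ is connected forces $\vintok{v}{k}$ to equal one constant $c$ for every node $k$. Thus eigenfunctions of $1$ are exactly the functions $\mathbf{v}$ with $\mathbf{v}_{k\to l}+\mathbf{v}_{l\to k}=c$ on every edge and $\vintok{v}{k}=c$ at every node, for a common $c$. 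The obvious source of such functions is the case $c=0$: antisymmetric edge functions, $\mathbf{v}_{k\to l} = -\mathbf{v}_{l\to k}$, with vanishing divergence $\vintok{v}{k}=0$ at each node. These are precisely the circulations of $G$, i.e. the cycle space, whose dimension is $m-n+1$ for a connected graph; and the assignment of an orientation-indexed value to each edge, extended by antisymmetry, maps injectively into functions on $\bar{E}$, so we obtain $m-n+1$ independent eigenfunctions.

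What remains is routine. First, the verification that a circulation is an eigenfunction: by \cref{eqn:operator}, $(\mathbf{B}\mathbf{v})_{k\to l} = \vintok{v}{k} - \mathbf{v}_{l\to k} = 0 - (-\mathbf{v}_{k\to l}) = \mathbf{v}_{k\to l}$, so $\mathbf{B}\mathbf{v}=\mathbf{v}$. Second, the cycle-rank count $m-n+1$ should be invoked for possibly non-simple connected graphs (self-loops and parallel edges enter the cycle space in the usual way), which is the one place requiring a little care about how $\bar{E}$ and the divergence behave at a self-loop. Combining, $m-n+1 \le \gm(1) \le \am(1) = m-n+1$. I do not expect a genuine obstacle here: the only thing one might worry about is that eigenfunctions with $c\neq 0$ enlarge the nullspace, but splitting $\mathbf{v}$ into its symmetric and antisymmetric parts forces the symmetric part to be the constant $c/2$, and then summing $\vintok{v}{k}=c$ over all nodes (using that an antisymmetric function sums to zero over $\bar{E}$, and $\sum_k d_k = 2m$) gives $c(m-n)=0$, hence $c=0$ since $G$ has at least two cycles --- confirming that the cycle space is the entire eigenspace, consistent with the algebraic multiplicity.
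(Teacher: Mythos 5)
Your proof is correct and follows essentially the same route as the paper: the upper bound $\gm(1)\le\am(1)=m-n+1$ from Proposition \ref{pro:am-plus1}, and the lower bound by identifying eigenfunctions of $1$ with Kirchhoff circulations (antisymmetric, divergence-free edge functions), whose span has dimension $m-n+1$. The only cosmetic difference is that the paper makes the cycle-space count explicit by exhibiting a fundamental-cycle basis from a spanning tree (which also settles the non-simple case your remark flags), whereas you invoke the cycle-rank formula directly; your constant-$c$ argument is the paper's $h=0$ step, merely placed at the end rather than the start.
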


\begin{proof}
Since $\gm$ is bounded above by $\am$, we have $\gm(1) \leq m - n + 1$.
Thus we only need to show that there exists a set of $m-n+1$ linearly independent functions that satisfy $\mathbf{B v}=\mathbf{v}$.
Inspection of \cref{eqn:evector} when $\lambda=1$ shows that there exists a global constant $h$ such that 
\begin{equation}
\mathbf{v}_{k \to l} + \mathbf{v}_{l \to k} = h =\vintok{v}{k},
\end{equation}
for any neighboring nodes $k,l$.
Summing the left equation over each of the $m$ edges yields 
\begin{equation}
\frac{1}{2}\sum_{k,l}a_{kl}\left(\mathbf{v}_{k\to l}+\mathbf{v}_{l\to k}\right)=mh,
\end{equation}
while summing the right equation over each of the $n$ nodes yields
\begin{equation}
nh = \sum_{k} \vintok{v}{k}.
\end{equation}
Note the sums in the last two equations sum each of the components of $\mathbf{v}$ exactly once and thus $nh=mh$.
Since $G$ is not a circle graph, we have $m>n$ and $h=0$.
In other words, $\mathbf{v}$ satisfies the system 
\begin{equation}\label{eqn:system-1}
\begin{cases}
\vintok{v}{k} = 0 & \text{for each node } k, \\
\mathbf{v}_{k \to l} + \mathbf{v}_{l \to k} = 0 & \text{for each edge } k-l.
\end{cases}
\end{equation}

We finish by arguing there are exactly $m-n+1$ linearly independent solutions of \cref{eqn:system-1}.
Suppose the nodes $\mathcal{C} = \{ v_1, \ldots, v_r \}$ form a non-self-intersecting cycle in $G$, and consider the vector $\mathbf{v}^\mathcal{C}$ with components
\begin{equation}
\mathbf{v}^\mathcal{C}_{v_i \to v_{i+1}} \coloneqq 1 \text{ and } \mathbf{v}^\mathcal{C}_{v_{i+1} \to v_{i}} \coloneqq -1, 
\end{equation}
where the index $i$ is taken modulo $r$, and zero elsewhere.
Then, $\mathbf{v}^\mathcal{C}$ satisfies \cref{eqn:system-1}.
Now take a spanning tree $T$ of $G$.
Each edge not in $T$ defines a non-self-intersecting cycle in $G$, and there are $m-n+1$ such edges, say $\{e_1, \ldots, e_{m-n+1}\}$.
Let $\{\mathcal{C}_1, \ldots, \mathcal{C}_{m-n+1}\}$ be the corresponding set of cycles, each $\mathcal{C}_j$ being defined by $e_j$.
For each cycle $\mathcal{C}_j$, construct the vector $\mathbf{v}^{\mathcal{C}_j}$ as before, that is, with components equal to $+1$ on edges going around $\mathcal{C}_j$ in one direction, components equal to $-1$ going around $\mathcal{C}_j$ in the other direction, and zero elsewhere.
Each vector $\mathbf{v}^{\mathcal{C}_j}$ is a solution of \cref{eqn:system-1}.
Finally, $\{ \mathbf{v}^{\mathcal{C}_j} \}$ is a linearly independent set because each $\mathbf{v}^{\mathcal{C}_j}$ is nonzero in the component corresponding to $e_j$, while for each $i \neq j$, $\mathbf{v}_i$ is $0$ on that component.
\end{proof}

\begin{corollary}
$\gm(1)$ is the number of linearly independent ways there are to assign electrical current flows to the edges of $G$ in such a way that they satisfy Kirchoff's law of circuits. 
\end{corollary}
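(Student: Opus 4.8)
The plan is to read the corollary straight off the proof of \cref{pro:gm-plus1}. That proof shows that a function $\mathbf{v}$ on $\bar{E}$ satisfies $\mathbf{B}\mathbf{v} = \mathbf{v}$ if and only if it solves the linear system \cref{eqn:system-1}; equivalently, the $\lambda = 1$ eigenspace of $\mathbf{B}$ \emph{is} the solution space of \cref{eqn:system-1}, so the two spaces have the same dimension, namely $\gm(1)$. It therefore suffices to recognize \cref{eqn:system-1} as exactly the system defining an electrical current flow that obeys Kirchhoff's (node) law, and to note that "the number of linearly independent ways" to prescribe such a flow means precisely the dimension of that solution space.

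To make the identification, I would fix a reference orientation of each edge of $G$ and set up the dictionary between functions on $\bar{E}$ and edge currents. The second line of \cref{eqn:system-1}, $\mathbf{v}_{k \to l} = -\mathbf{v}_{l \to k}$, says exactly that $\mathbf{v}$ is antisymmetric under reversal of orientation; hence $\mathbf{v}$ descends to a well-defined real number $c(e)$ for each (unoriented) edge $e$, interpreted as the signed current through $e$ with respect to its reference orientation, and conversely every current assignment lifts to such an antisymmetric $\mathbf{v}$. Under this correspondence the first line of \cref{eqn:system-1}, $\vintok{v}{k} = \sum_i a_{ik}\mathbf{v}_{i \to k} = 0$, states that the sum of the currents on the edges incident to $k$, each taken with the sign that orients it into $k$, vanishes — i.e., the net current flowing into node $k$ is zero, which is Kirchhoff's current law at $k$. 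Thus solutions of \cref{eqn:system-1} correspond bijectively and linearly to Kirchhoff-compatible current assignments, so the dimension of the latter space equals $\gm(1)$, proving the claim.

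I do not expect a genuine obstacle; the argument is a reinterpretation of the already-established \cref{pro:gm-plus1}, and the only points needing a little care are bookkeeping. First, one should phrase "number of linearly independent ways" as the dimension of the (linear) solution space so that the count is well defined. Second, it is worth remarking that this is consistent with the explicit basis in the proof: the fundamental-cycle vectors $\mathbf{v}^{\mathcal{C}_j}$ are precisely unit currents circulating around the fundamental cycles of a spanning tree — the classical basis of the cycle space — whose cardinality $m - n + 1$ is both $\gm(1)$ and the dimension of the space of Kirchhoff flows. Third, in the non-simple case a self-loop at $v$ contributes equal amounts to the in- and out-flow at $v$ by antisymmetry, hence is unconstrained by the node law, again matching the cycle-rank count; this does not affect the one-line argument.
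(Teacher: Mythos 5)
Your proposal is correct and follows the paper's own (very brief) argument: identify a solution of \cref{eqn:system-1} with a current assignment, where the antisymmetry condition makes the current per edge well defined and the condition $\vintok{v}{k}=0$ is exactly Kirchhoff's current law at each node. Your extra bookkeeping (reference orientations, the fundamental-cycle basis, self-loops) only elaborates the same identification, so the approaches are essentially identical.
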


\begin{proof}
If $G$ represents an electrical circuit and the value of $\mathbf{v}_{k\to l}$ is the current flow in the direction of $k$ to $l$, then \cref{eqn:system-1} is exactly equivalent to Kirchoff's law. 
\end{proof}

\begin{proposition}\label{pro:gm-minus1}
Let $G$ have at least two cycles.
Then $\gm(-1) = m - n + 1$ if $G$ is bipartite and $\gm(-1) = m - n$ if $G$ is not bipartite. 
\end{proposition}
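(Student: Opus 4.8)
The plan is to mirror the proof of \Cref{pro:gm-plus1}: bound $\gm(-1)$ above by the algebraic multiplicity from \Cref{pro:am-minus1}, and then produce exactly that many linearly independent eigenfunctions. Since $\gm \le \am$ always, \Cref{pro:am-minus1} immediately gives $\gm(-1) \le m - n + \mathbf{1}\{G \text{ is bipartite}\}$, so the entire task is the matching lower bound.

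For the lower bound I would exploit that \eqref{eqn:operator} makes $(-1)$-eigenfunctions easy to recognize: if a function $\mathbf{v}$ on $\bar E$ is \emph{symmetric}, i.e. $\mathbf{v}_{k \to l} = \mathbf{v}_{l \to k}$ on every edge, and also satisfies $\vintok{v}{k} = 0$ for every node $k$, then $(\mathbf{Bv})_{k\to l} = \vintok{v}{k} - \mathbf{v}_{l\to k} = -\mathbf{v}_{l\to k} = -\mathbf{v}_{k\to l}$, so $\mathbf{Bv} = -\mathbf{v}$; conversely, reading \eqref{eqn:evector} at $\lambda = -1$ shows every symmetric $(-1)$-eigenfunction has $\vintok{v}{k} = 0$. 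This is the exact analogue of the $+1$ case, where one uses \emph{antisymmetric} $\mathbf{v}$ with $\vintok{v}{k}=0$. A symmetric $\mathbf{v}$ is nothing but a function $w$ on the unoriented edge set, and $\vintok{v}{k}=0$ for all $k$ translates to $\sum_{l \sim k} w_{\{k,l\}} = 0$ for all $k$, i.e. $w$ lies in the kernel of the unsigned incidence map $M \colon \mathbb{C}^{E(G)} \to \mathbb{C}^{V(G)}$. This identification is a linear isomorphism onto the space of symmetric $(-1)$-eigenfunctions, whence $\gm(-1) \ge \dim \ker M$.

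It then suffices to compute $\dim\ker M$. The identity $M M^\top = \mathbf{D}+\mathbf{A}$ (the signless Laplacian) reduces this to a fact already invoked in \Cref{pro:am-minus1}: for a connected graph $\mathbf{D}+\mathbf{A}$ has rank $n$ when $G$ is not bipartite and rank $n-1$ when $G$ is bipartite. As $M$ is real, $\operatorname{rank} M = \operatorname{rank} M M^\top$, so $\dim\ker M = m - n + \mathbf{1}\{G\text{ is bipartite}\}$, and combining with the upper bound closes the argument.

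I expect the only delicate point to be bookkeeping with non-simple graphs: the identification of symmetric functions with edge functions, and the identity $M M^\top = \mathbf{D}+\mathbf{A}$, must be set up with the same conventions (self-loops adding $2$ to the degree, parallel edges counted with multiplicity) under which \eqref{eqn:def-b} and \eqref{eqn:operator} are written; this is routine but is the one place something could go subtly wrong. There is also a conceptual subtlety hidden by the $\am$ bound: adding \eqref{eqn:evector} over the two orientations of an edge gives $\vintok{v}{k} + \vintok{v}{l} = 0$ for adjacent $k,l$, so on a connected \emph{non-bipartite} graph an odd cycle forces $\vintok{v}{k} = 0$ everywhere and hence $\mathbf{v}$ symmetric — giving $\gm(-1) = \dim\ker M$ with no appeal to the algebraic multiplicity — whereas in the \emph{bipartite} case this only yields $\vintok{v}{k} = \sigma(k)\,c$ for a $2$-colouring sign $\sigma$ and a scalar $c$, leaving a potential extra dimension ($c \ne 0$) that is most cleanly killed by the upper bound $\am(-1) = m-n+1$.
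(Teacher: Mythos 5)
Your proposal is correct, and it follows the paper's overall strategy (produce symmetric eigenfunctions with $\vintok{v}{k}=0$ for every $k$, then match the count against $\am(-1)$), but the counting mechanism is genuinely different. The paper exhibits explicit eigenfunctions supported on even cycles, alternating $\pm1$ and symmetric in the two orientations, and then cites the known fact that the even-cycle space has dimension $m-n+\mathbf{1}\{G \text{ bipartite}\}$; this explicit cycle-supported basis is reused later in \Cref{cor:bases}. You instead identify the whole space of symmetric solutions of \cref{eqn:system-minus-1} with $\ker M$ for the unsigned incidence map and compute its dimension from $\operatorname{rank} M=\operatorname{rank}(MM^{\top})=\operatorname{rank}(\mathbf{D}+\mathbf{A})$, i.e.\ from the signless Laplacian rank already used in \Cref{pro:am-minus1}; of course $\ker M$ is exactly the (real) even-cycle space, so both routes rest on the same classical fact, but yours trades the citation and the explicit basis for a short linear-algebra computation (and consequently does not by itself yield the cycle-supported basis claimed in \Cref{cor:bases}). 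A further point in your favor: the paper's opening claim that every $(-1)$-eigenfunction satisfies \cref{eqn:system-minus-1} is justified only by ``an argument similar to'' the $+1$ case, and, as you observe, the naive adaptation of that sum argument leaves open the bipartite possibility $\vintok{v}{k}=\sigma(k)c$ with $c\neq 0$; your proof needs only the easy direction (symmetric functions with zero in-sums are eigenfunctions) and disposes of the excess via $\gm\leq\am$, while in the non-bipartite case your odd-cycle argument even makes the appeal to $\am$ unnecessary. The bookkeeping you flag for self-loops and multi-edges (so that $MM^{\top}=\mathbf{D}+\mathbf{A}$ matches the degree conventions of \cref{eqn:def-b}) is indeed routine and is the only place requiring care.
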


\begin{proof}
Inspection of \cref{eqn:evector} when $\lambda = -1$ and an argument similar to that in Proposition \ref{pro:gm-plus1} shows that if $\mathbf{B v} = -\mathbf{v}$ then 
\begin{equation}
\begin{cases}
\vintok{v}{k} = 0 & \text{for each node } k,\\
\mathbf{v}_{k \to l} = \mathbf{v}_{l \to k} & \text{for each edge } k-l.
\end{cases}\label{eqn:system-minus-1}
\end{equation}
To fix ideas, suppose the nodes $\mathcal{C} = \{w, x, y, z\}$ form a cycle of length $4$ in $G$, and consider the function $\mathbf{v}$ with $\mathbf{v}_{w \to x} = \mathbf{v}_{x \to w} = \mathbf{v}_{y \to z} = \mathbf{v}_{z \to y} = 1$, and $\mathbf{v}_{x \to y} = \mathbf{v}_{y \to x} = \mathbf{v}_{z \to w} = \mathbf{v}_{w \to z} = -1$, and zero elsewhere.
Then, $\mathbf{v}$ satisfies \cref{eqn:system-minus-1}.
In general, a vector thusly constructed from a cycle $\mathcal{C}$ of even length will satisfy \cref{eqn:system-minus-1}.
The dimension of the space spanned by the even-length cycles has been studied in \cite{rowlinson2004,simic2015,godsil2013algebraic}, and it is known to be $m-n+1$ when $G$ is bipartite and $m-n$ otherwise. 
\end{proof}

\begin{corollary}\label{cor:bases}
The eigenspace of $\lambda = 1$ admits a basis where each element is supported on a different cycle, while the eigenspace of $\lambda = -1$ admits a basis where each element is supported on a different cycle of even length.
\end{corollary}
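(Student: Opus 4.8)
The plan is to read both bases off the constructions already present in the proofs of Propositions~\ref{pro:gm-plus1} and~\ref{pro:gm-minus1}; beyond those, all that is needed is an elementary dimension count.

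For $\lambda = 1$, I would simply point out that the proof of Proposition~\ref{pro:gm-plus1} \emph{already} produces such a basis. Fixing a spanning tree $T$, that proof attaches to each of the $m-n+1$ non-tree edges $e_j$ the function $\mathbf{v}^{\mathcal{C}_j}$, equal to $\pm 1$ around the fundamental cycle $\mathcal{C}_j$ of $e_j$ and $0$ elsewhere, so that $\operatorname{supp}\mathbf{v}^{\mathcal{C}_j}=E(\mathcal{C}_j)$; it checks that each $\mathbf{v}^{\mathcal{C}_j}$ is an eigenfunction for $1$ and that the family is linearly independent. Since $\gm(1)=m-n+1$ by the same proposition, this family is a basis, and the cycles $\mathcal{C}_j$ are pairwise distinct because $\mathcal{C}_j$ contains the non-tree edge $e_j$ and no other. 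So for $\lambda=1$ there is literally nothing left to prove.

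For $\lambda=-1$, let $W$ be the solution space of~\cref{eqn:system-minus-1}, so that $\gm(-1)=\dim W$. From the proof of Proposition~\ref{pro:gm-minus1}, every even-length cycle $\mathcal{C}$ gives, via the alternating-sign pattern, an element $\mathbf{v}^{\mathcal{C}}\in W$ with $\operatorname{supp}\mathbf{v}^{\mathcal{C}}=E(\mathcal{C})$, so $\mathcal{C}$ is recovered from $\mathbf{v}^{\mathcal{C}}$. Put $S=\{\mathbf{v}^{\mathcal{C}}:\mathcal{C}\text{ an even-length cycle of }G\}\subseteq W$. The classical fact invoked in that proof is that $\dim\operatorname{span}S=\dim W$ (namely $m-n+1$ or $m-n$ according to bipartiteness); since $\operatorname{span}S\subseteq W$ this forces $\operatorname{span}S=W$. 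A finite spanning set of a vector space contains a basis, so I would extract a basis of $W$ from $S$; its elements are then supported on pairwise distinct even-length cycles, as claimed.

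The only step carrying genuine content — and the expected obstacle — is the identity $\operatorname{span}S=W$ for $\lambda=-1$: it is exactly the even-cycle-space dimension result borrowed from~\cite{rowlinson2004,simic2015,godsil2013algebraic} and used in Proposition~\ref{pro:gm-minus1}, and the care needed is in confirming that the even-length cycles generate the \emph{entire} kernel of the system~\cref{eqn:system-minus-1}, rather than a proper subspace of it. Once that is granted, the corollary is the elementary remark that a linearly independent family of maximal size, or a spanning family of the right dimension, is a basis.
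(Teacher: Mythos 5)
Your proposal is correct and follows essentially the same route as the paper, whose proof simply points back to the bases exhibited in Propositions~\ref{pro:gm-plus1} and~\ref{pro:gm-minus1}. Your extra care for $\lambda=-1$ (noting that the even-cycle vectors form a spanning family of the kernel of~\cref{eqn:system-minus-1} of the right dimension, from which a basis supported on distinct even cycles is extracted) is a faithful, slightly more explicit rendering of what the paper leaves implicit.
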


\begin{proof}
These bases were exhibited in the proofs of Propositions \ref{pro:gm-plus1} and \ref{pro:gm-minus1}.
\end{proof}

%%%%%%%%%%%%%%%%%%%%%%%%%%%%%%%%%%%%%%%%%%%%%%%%%%%%%%%%%%%%%%%%

%\part{The nonreal case: geometric multiplicity}

%\section{The nonreal case}\label{sec:nonreal}
The rest of this manuscript is dedicated to study the nonreal unitary NB eigenvalues.
This section showed that the real eigenvalues are related to the existence of arbitrary cycles in $G$.
The next sections will establish that the existence of nonreal eigenvalues is related to the existence of a particular class of subgraphs, namely graphs that are \emph{subdivisions} of other graphs.

%%%%%%%%%%%%%%%%%%%%%%%%%%%%%%%%%%%%%%%%%%%%%%%%%%%%%%%%%%%%%%%%
\section{Eigenfunctions of unitary eigenvalue}\label{sec:eigenfunctions}

\rr{
Suppose $\mathbf{B} \mathbf{v} = \lambda \mathbf{v}$.  The following equivalence will be key to our discussion.

\begin{equation}\label{eqn:bbt}
|\lambda| = 1 \quad \iff \quad \mathbf{B^{*} B} \mathbf{v} = \mathbf{B B^{*}} \mathbf{v} = \mathbf{v}.
\end{equation}

The forward direction is proved in Appendix \ref{app:indefinite} using techniques from indefinite linear algebra.  The reverse direction is immediate: if $\mathbf{B^{*}} \mathbf{B} \mathbf{v} = \mathbf{v}$ and $\mathbf{B} \mathbf{v} = \lambda \mathbf{v}$, then $\langle \mathbf{v}, \mathbf{v} \rangle = \langle \mathbf{v}, \mathbf{B^{*}} \mathbf{B} \mathbf{v} \rangle = \langle \mathbf{B} \mathbf{v}, \mathbf{B} \mathbf{v} \rangle = |\lambda|^2 \langle \mathbf{v}, \mathbf{v} \rangle$, thus $|\lambda| = 1$.
} In view of this, we start by computing $\mathbf{B^{*} B}$ and $\mathbf{B B^{*}}$; see \cref{fig:bbt}.

\subsection{Non-leaky functions}\label{sec:detachment}

\begin{figure}[t]
\begin{centering}
\includegraphics[width=0.9\textwidth]{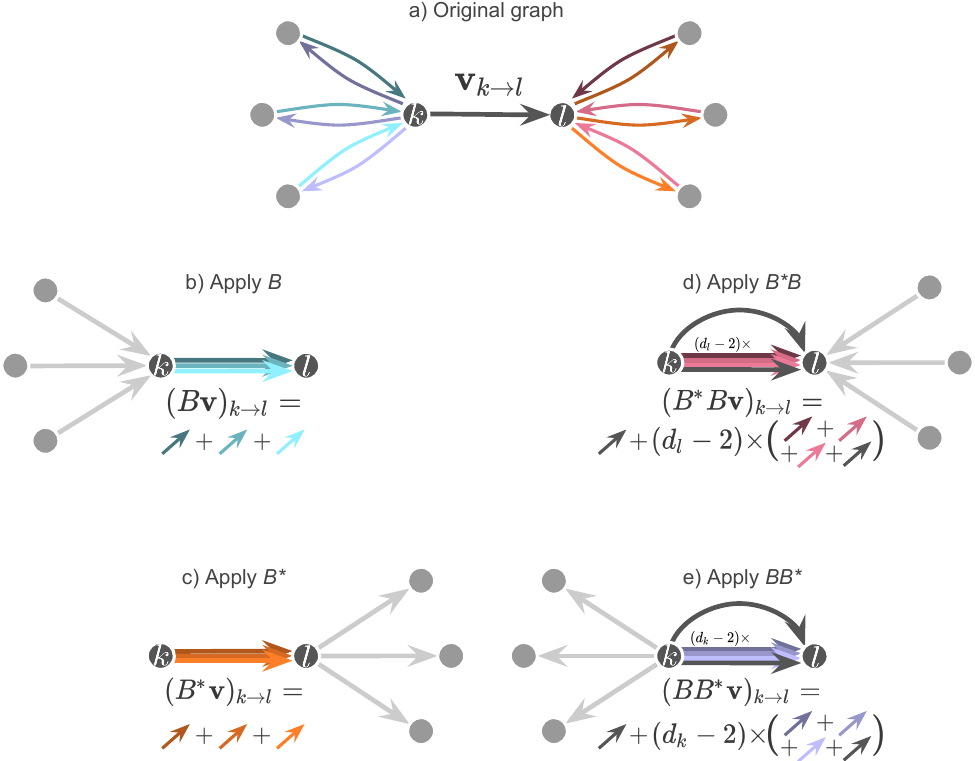}
\par\end{centering}
\caption{\label{fig:bbt}
\textbf{The action of $\mathbf{B,B^{*},B^{*}B}$ and $\mathbf{BB^{*}}$ on a function $\mathbf{v}$}.
\textbf{a)} The value of $\mathbf{v}$ at each directed edge is color-coded.
\textbf{b)} The value of $\left( \mathbf{B v}\right)_{k \to l}$ equals the sum of the values along edges incoming to $k$, ignoring the backtrack; cf. \cref{fig:nbm-doodle}.
Overlapping arrows represent the sum of the corresponding color-coded quantities.
\textbf{c-e)} The result of applying $\mathbf{B^*, B^* B, B B^*}$ is similarly depicted. 
}
\end{figure}

\begin{lemma}\label{lem:bbt}
Let $\mathbf{B}$ be the NB matrix of any graph $G$.
Then for any function $\mathbf{v}$, the following hold
\begin{equation}
\left( \mathbf{B^{*} B} \mathbf{v} \right)_{k \to l} = \left( d_{l} - 2 \right) \vintok{v}{l} + \mathbf{v}_{k \to l} \quad\quad \text{and} \quad\quad \left( \mathbf{B B^{*}} \mathbf{v} \right)_{k \to l} = \left(d_{k} - 2 \right) \vfromk{v}{k} + \mathbf{v}_{k \to l}.
\end{equation}
\end{lemma}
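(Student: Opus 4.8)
The plan is to compute the two operators directly from the action formula \cref{eqn:operator} by composing it with the corresponding formula for the adjoint $\mathbf{B}^{*}$. First I would record what $\mathbf{B}^{*}$ does: since $\mathbf{B}$ has $0/1$ entries given by \cref{eqn:def-b}, its conjugate transpose is just its transpose, and $\mathbf{B}^{\top}_{i \to j, k \to l} = \mathbf{1}\{j = k\}(1 - \mathbf{1}\{i = l\})$ means that $\left( \mathbf{B}^{*} \mathbf{v} \right)_{k \to l}$ collects $\mathbf{v}_{l \to j}$ over all neighbors $j$ of $l$ with $j \neq k$; that is, $\left( \mathbf{B}^{*} \mathbf{v} \right)_{k \to l} = \vfromk{v}{l} - \mathbf{v}_{l \to k}$, the ``outgoing'' analogue of \cref{eqn:operator}. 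With these two one-step formulas in hand, the computation is a matter of substitution.

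Next I would compute $\mathbf{B}^{*}\mathbf{B}$. Set $\mathbf{w} = \mathbf{B v}$, so by \cref{eqn:operator} $\mathbf{w}_{p \to q} = \vintok{v}{p} - \mathbf{v}_{q \to p}$. Then $\left( \mathbf{B}^{*} \mathbf{w} \right)_{k \to l} = \vfromk{w}{l} - \mathbf{w}_{l \to k} = \sum_{i} a_{il}\mathbf{w}_{l \to i} - \mathbf{w}_{l \to k}$. Each term $\mathbf{w}_{l \to i} = \vintok{v}{l} - \mathbf{v}_{i \to l}$, so the sum over the $d_{l}$ neighbors $i$ of $l$ contributes $d_{l}\,\vintok{v}{l} - \sum_{i} a_{il}\mathbf{v}_{i \to l} = d_{l}\,\vintok{v}{l} - \vintok{v}{l} = (d_{l}-1)\vintok{v}{l}$, and subtracting $\mathbf{w}_{l \to k} = \vintok{v}{l} - \mathbf{v}_{k \to l}$ gives $(d_{l}-2)\vintok{v}{l} + \mathbf{v}_{k \to l}$, as claimed. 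The formula for $\mathbf{B B}^{*}$ is entirely symmetric: put $\mathbf{w}' = \mathbf{B}^{*}\mathbf{v}$ with $\mathbf{w}'_{p \to q} = \vfromk{v}{q} - \mathbf{v}_{q \to p}$, apply \cref{eqn:operator}, and the same bookkeeping — now with the roles of ``into'' and ``from'' and of source and target interchanged — yields $(d_{k}-2)\vfromk{v}{k} + \mathbf{v}_{k \to l}$. One should double-check the non-simple case, where a self-loop or multi-edge at $l$ is counted with the correct multiplicity in $d_{l}$ and in the sum $\sum_{i} a_{il}$, but since both the degree and the adjacency-weighted sum use the same edge multiplicities, the telescoping $d_{l} - 1$ step is unaffected.

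There is no serious obstacle here; the only thing to be careful about is the telescoping step, namely recognizing that $\sum_{i} a_{il}\vintok{v}{l} = d_{l}\vintok{v}{l}$ because $\vintok{v}{l}$ does not depend on the summation index $i$, and then that $\sum_{i} a_{il}\mathbf{v}_{i \to l}$ is exactly $\vintok{v}{l}$ again by definition \cref{eqn:def-vintok}. Keeping track of the single subtracted backtrack term $\mathbf{w}_{l \to k}$ versus the $d_{l}$-fold sum is what produces the coefficient $d_{l} - 2$ rather than $d_{l}$ or $d_{l}-1$, so I would present the two displays side by side to make the symmetry transparent and leave the $\mathbf{B B}^{*}$ case as ``an identical computation with sources and targets interchanged.''
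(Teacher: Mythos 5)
Your proposal is correct and is essentially the paper's own argument: a direct computation from the definition of $\mathbf{B}$, producing the telescoping $(d_l-1)\vintok{v}{l} - \vintok{v}{l} + \mathbf{v}_{k\to l}$ in the $\mathbf{B^*B}$ case and the symmetric bookkeeping for $\mathbf{BB^*}$. The only difference is organizational — you factor the computation through the one-step formula $\left(\mathbf{B}^{*}\mathbf{v}\right)_{k\to l} = \vfromk{v}{l} - \mathbf{v}_{l\to k}$ and then compose, whereas the paper expands the double sum over oriented edges in one pass — which is the same calculation.
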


\begin{proof}
This is direct from the definition of $\mathbf{B}$.
For brevity, we show only the case of $\mathbf{B^{*}B v}$. 
\begin{align}
\left( \mathbf{B^{*}B} \mathbf{v} \right)_{k \to l}
 &= \sum_{i \to j} \mathbf{1}\{i = l\} \left( 1 - \mathbf{1}\{j=k\} \right) \sum_{r \to s} \mathbf{1}
 \{i = s\} \left( 1 - \mathbf{1}\{j=r\} \right) \mathbf{v}_{r \to s} \\
 &= \sum_{ j \neq k }a_{jl} \left( \sum_{r} a_{rl} \mathbf{v}_{r \to l} - \mathbf{v}_{j \to l} \right) \nonumber \\
 & = \left( d_{l} - 1 \right) \vintok{v}{l} - \vintok{v}{l} + \mathbf{v}_{k \to l}. \nonumber \qedhere
\end{align}
\end{proof}

Lemma \ref{lem:bbt} features the first appearance of $\vfromk{v}{k}$ (``\emph    {$\mathbf{v}$ from $k$}'') in our discussion.
When $\mathbf{v}$ is an eigenfunction, $\vintok{v}{k}$ and $\vfromk{v}{k}$ are related as follows.

\begin{lemma}[See \cite{lin2019non}]\label{lem:NB-in-cent}
Suppose $\mathbf{B v} = \lambda \mathbf{v}$.
Then for any node $k$ we have 
\begin{equation}
\left( d_{k} - 1 \right) \vintok{v}{k} = \lambda\,\vfromk{v}{k}.
\end{equation}
\end{lemma}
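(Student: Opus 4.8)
The plan is to work with the componentwise form of the eigenvalue equation, which is valid for an arbitrary eigenvalue $\lambda$, rather than to go through $\mathbf{B}^{*}\mathbf{B}$ (using \cref{eqn:bbt} and \cref{lem:bbt} here would instead force the much stronger conclusion $(d_{l}-2)\vintok{v}{l} = 0$, which is special to the unitary case). Recall that \cref{eqn:evector} is precisely the statement of $\mathbf{B v} = \lambda \mathbf{v}$ read off at the oriented edge $k \to l$: for every pair of neighboring nodes $k, l$ one has $\lambda \mathbf{v}_{k \to l} + \mathbf{v}_{l \to k} = \vintok{v}{k}$. The key move is to fix the node $k$ and sum this identity over all oriented edges emanating from $k$, i.e.\ over all neighbors $l$ of $k$ counted with edge multiplicity.

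Carrying out the sum, the left-hand side splits as $\lambda \sum_{l} a_{lk} \mathbf{v}_{k \to l} + \sum_{l} a_{lk} \mathbf{v}_{l \to k}$, which by the definitions in \cref{eqn:def-vintok} equals $\lambda\, \vfromk{v}{k} + \vintok{v}{k}$. The right-hand side is $\vintok{v}{k}$ added once for each of the $d_{k}$ oriented edges leaving $k$, hence it equals $d_{k}\, \vintok{v}{k}$. Equating the two sides and rearranging gives $(d_{k} - 1)\, \vintok{v}{k} = \lambda\, \vfromk{v}{k}$, which is the assertion.

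The only point that needs care is the bookkeeping when $G$ is not simple. A self-loop at $k$ contributes two oriented edges, each with source and target $k$, and these must be summed over separately; this is consistent both with the convention that a self-loop adds two units to $d_{k}$ and with the edge-indexed description of $\mathbf{B}$ in the footnote to \cref{eqn:def-b}, so the count of $d_{k}$ terms on the right-hand side and the identification of the two sums on the left with $\vfromk{v}{k}$ and $\vintok{v}{k}$ both remain correct verbatim. I expect this consistency check to be the main (and quite mild) obstacle: there is no estimate or structural argument involved, only the need to make sure the summation ranges over oriented edges the same way on both sides.
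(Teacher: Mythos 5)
Your proposal is correct and is essentially the paper's own argument: the paper likewise obtains the identity by summing \cref{eqn:evector} over all neighbors of $k$ (your version just spells out the bookkeeping, including the multi-edge/self-loop case, more explicitly).
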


\begin{proof}
Replacing \cref{eqn:def-vintok} in \cref{eqn:evector} and summing over all neighbors of $k$ we obtain the result.
\end{proof}

Lemma \ref{lem:NB-in-cent} implies that for an eigenfunction $\mathbf{v}$, we have $\vintok{v}{k} = 0$ if and only if $\vfromk{v}{k} = 0$ (recall we only consider graphs with minimum degree at least $2$).
Therefore, and in view of \cref{eqn:bbt} and Lemma \ref{lem:bbt}, to understand the eigenfunctions of unitary eigenvalues, it is sufficient to understand those functions $\mathbf{v}$ that satisfy $\left( d_{k} - 2 \right) \vintok{v}{k} = 0$ for each node $k$.
The following example and the accompanying \cref{fig:cycle-support} provide a structural interpretation of this condition.

\begin{example}[\bf{Eigenfunction supported on a cycle}]\label{exm:cycle-support}
Let $G$ be a graph with NB matrix $\mathbf{B}$ and let $\mathcal{D}$ be a set of nodes whose induced subgraph is a cycle.
Let $k \in \mathcal{D}$ and $l \notin \mathcal{D}$ be neighboring nodes.
Suppose $\mathbf{B v}=\lambda \mathbf{v}$ with $\lambda \neq 0$ and assume $\mathbf{v}$ is supported solely on the edges among the nodes in $\mathcal{D}$.
Since $k \to l$ and $l \to k$ are not in the support of $\mathbf{v}$, we have
\begin{equation}
0 = \lambda \mathbf{v}_{k \to l} = \left( \mathbf{B v} \right)_{k \to l} = \vintok{v}{k} - \mathbf{v}_{l \to k} = \vintok{v}{k}.
\end{equation}
Thus, a necessary condition for $\mathbf{v}$ to be an eigenfunction supported on a cycle $\mathcal{D}$ is that for every $k$ in $\mathcal{D}$ with a neighbor not in $\mathcal{D}$, we must have $\vintok{v}{k} = 0$.
If $k \in \mathcal{D}$ has no neighbors outside of $\mathcal{D}$, i.e. if its degree is $2$, then there is no restriction on $\vintok{v}{k}$.
In other words, $\mathbf{v}$ satisfies $\left( d_{k} - 2 \right) \vintok{v}{k} = 0$ for each $k$, and therefore $\mathbf{B^{*} B v} = \mathbf{v}$ by Lemma \ref{lem:bbt}, and $\mathbf{B B^{*} v} = \mathbf{v}$ due to Lemma \ref{lem:NB-in-cent}.
Therefore, $\lambda$ is unitary. Lastly, if $r$ is the length of the cycle induced by $\mathcal{D}$, we have $\mathbf{B}^{r} \mathbf{v} = \mathbf{v}$ and thus $\lambda^{r}=1$.
\end{example}

\begin{figure}
\centering
\includegraphics[width=\textwidth]{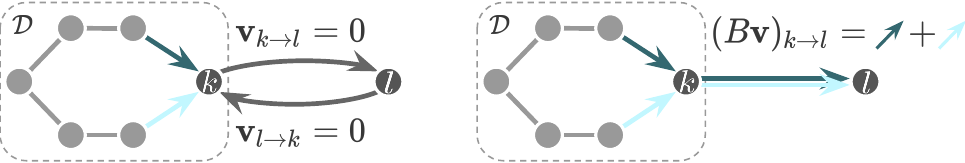}
\caption{\label{fig:cycle-support}
\textbf{Example of leakiness.}
The nodes in $\mathcal{D}$ induce a cycle. The node $k\in\mathcal{D}$ has a neighbor $l\protect\notin\mathcal{D}$.
\textbf{Left:} a function $\mathbf{v}$ supported on $\mathcal{D}$.
\textbf{Right:} if $\mathbf{v}$ is an eigenfunction, the sum of all values incoming to $k$ must be zero.
If $\left( \mathbf{Bv} \right)_{k \to l}$ is non-zero, we say that $\mathbf{v}$ \emph{leaks out} of $\mathcal{D}$ via $k$.
Nodes with degree $2$ can never leak.}
\end{figure}

The following terminology captures the property $\left( d_{k} - 2 \right) \vintok{v}{k} = 0$, inspired by Example \ref{exm:cycle-support}. 

\begin{definition}[\bf{Non-leaky functions}]\label{def:leaky}
Consider a function $\mathbf{v}$ (not necessarily an eigenfunction) with support $\mathcal{D}$ (not necessarily a cycle).
If there is a $k \in \mathcal{D}$ such that $\left( d_{k} - 2 \right) \vintok{v}{k} \neq 0$, we say that \emph{$\mathbf{v}$ leaks out of $\mathcal{D}$ via $k$}, or simply that \emph{$\mathbf{v}$ is leaky}.
If $\mathbf{v}$ does not leak via any node, we say that $\mathbf{v}$ is \emph{non-leaky}; see \cref{fig:cycle-support}.
Every NB eigenfunction $\mathbf{v}$ of unitary eigenvalue $\lambda$ is non-leaky, because it must satisfy $\mathbf{B B^{*} v} = \mathbf{B^{*} B v} = \mathbf{v}$; see Lemma \ref{lem:bbt}.
However, there are non-leaky functions that are not eigenfunctions, namely, linear combinations of eigenfunctions of different unitary eigenvalues.
\end{definition}

The utility of non-leaky functions, which include eigenfunctions of unitary eigenvalue, is that they can be studied by focusing only on their support and neglecting the rest of the graph, or, in other words, by \emph{detaching} their support from the rest of the graph.
In what follows, we will mainly use this idea implicitly by focusing only on the support of an eigenfunction and ignoring the rest of the graph.
In \cref{sec:gluing}, we define a construction that is converse to this detachment which we call \emph{gluing}.

%%%%%%%%%%%%%%%%%%%%%%%%%%%%%%%%%%%%%%%%%%%%%%%%%%%%%%%%%%%%%%%%
\subsection{Support of an eigenfunction}\label{sec:support}
We now characterize those unitary complex numbers that may possibly be NB eigenvalues, and then we characterize the graphs that may possibly be the support of an eigenfunction of unitary eigenvalue.
Some of the following arguments require the graph to have no nodes of degree $1$.
However, as previously mentioned, the nodes with degree $1$ have no influence on the nonzero eigenvalues \cite{Grindrod2018,torres2020non}.
Thus, the results here are valid for arbitrary graphs, without restriction, as they concern only nonzero eigenvalues.

\begin{definition}
Given a NB eigenfunction $\mathbf{v}$ of a graph $G$, its \emph{support} is the set of edges $k - l$ such that at least one of $\mathbf{v}_{k \to l}$ or $\mathbf{v}_{l \to k}$ is nonzero. 
\end{definition}

\newpage

\begin{lemma}\label{lem:support-md2}
Let $G$ be a graph with no nodes of degree $1$ and NB matrix $\mathbf{B}$.
Suppose $\mathbf{B v} = \lambda \mathbf{v}$ with $\lambda \neq 0$ and let $H$ be the graph induced by the support of $\mathbf{v}$.
Then, $H$ has no nodes of degree $1$.
\end{lemma}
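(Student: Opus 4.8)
The plan is to argue by contradiction: suppose some node $k$ has degree exactly $1$ in $H$, meaning there is a unique edge $k - l$ in the support of $\mathbf{v}$, while possibly $d_k > 1$ in $G$. I would first exploit the eigenvalue equation \cref{eqn:evector} at the oriented edge $k \to l$. Since $k$ has no neighbors in $H$ other than $l$, every incoming edge $i \to k$ with $i \neq l$ lies outside the support, so $\mathbf{v}_{i \to k} = 0$; hence $\vintok{v}{k} = a_{lk}\mathbf{v}_{l \to k} = \mathbf{v}_{l \to k}$ (using that $k$ has degree $\geq 2$ in $G$, so there are no self-loops to worry about at $k$, or handling that case separately). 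Substituting into \cref{eqn:evector} gives $\lambda \mathbf{v}_{k \to l} + \mathbf{v}_{l \to k} = \mathbf{v}_{l \to k}$, so $\lambda \mathbf{v}_{k \to l} = 0$, and since $\lambda \neq 0$ we conclude $\mathbf{v}_{k \to l} = 0$.

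Next I would show the reverse orientation also vanishes, i.e. $\mathbf{v}_{l \to k} = 0$, which contradicts $k - l$ being in the support. For this I use \cref{eqn:evector} again, this time looking at what forces $\mathbf{v}_{l \to k}$. Writing the eigenvalue equation at a suitable oriented edge into $k$: for any neighbor $i$ of $k$ in $G$, \cref{eqn:evector} at $i \to k$ gives $\lambda \mathbf{v}_{i \to k} + \mathbf{v}_{k \to i} = \vintok{v}{i}$; but more directly, consider that $\vfromk{v}{k} = \sum_i a_{ik}\mathbf{v}_{k \to i} = \mathbf{v}_{k \to l} = 0$ since $k \to l$ is the only possibly-nonzero outgoing edge and we just showed it vanishes. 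By Lemma~\ref{lem:NB-in-cent}, $(d_k - 1)\vintok{v}{k} = \lambda \vfromk{v}{k} = 0$, and since $d_k \geq 2$ in $G$ we get $\vintok{v}{k} = 0$, hence $\mathbf{v}_{l \to k} = \vintok{v}{k} = 0$. Thus both $\mathbf{v}_{k \to l}$ and $\mathbf{v}_{l \to k}$ are zero, so $k - l$ is not in the support, contradicting $\deg_H(k) = 1$.

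I should also dispatch the degenerate possibility that $\deg_H(k) = 1$ is realized by a self-loop or multi-edge incident to $k$; but a self-loop contributes $2$ to the degree, so if the unique support-edge at $k$ were a self-loop, $k$ would have degree $2$ in $H$, not $1$; and a multi-edge between $k$ and $l$ would likewise give degree $\geq 2$. So the only case is a single ordinary edge $k - l$, which the argument above handles.

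The main obstacle I anticipate is bookkeeping around non-simple graphs and the self-loop convention — making sure the identities $\vintok{v}{k} = \mathbf{v}_{l \to k}$ and $\vfromk{v}{k} = \mathbf{v}_{k \to l}$ are correctly stated when $k$ could a priori have other edges in $G$ (just not in the support of $\mathbf{v}$). Once one is careful that "degree $1$ in $H$" pins down a single oriented pair $\{k \to l, l \to k\}$ as the only candidates for nonzero values at $k$, everything else is a short substitution into \cref{eqn:evector} and one appeal to Lemma~\ref{lem:NB-in-cent}.
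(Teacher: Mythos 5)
Your proof is correct, and its first half is exactly the paper's argument: evaluate \cref{eqn:evector} at $k \to l$, note that $\vintok{v}{k}$ reduces to $\mathbf{v}_{l \to k}$ because $k$ has no other support edge, and cancel to get $\mathbf{v}_{k \to l} = 0$. Where you diverge is the second half. The paper picks a neighbor $z \neq l$ of $k$ lying outside $H$ (possible since $d_k > 1$ in $G$) and evaluates \cref{eqn:evector} at the out-edge $k \to z$, which is not in the support, to read off $\mathbf{v}_{l \to k} = 0$ directly. You instead observe $\vfromk{v}{k} = \mathbf{v}_{k \to l} = 0$ and invoke Lemma \ref{lem:NB-in-cent} to get $(d_k - 1)\vintok{v}{k} = 0$, hence $\vintok{v}{k} = \mathbf{v}_{l \to k} = 0$; since that lemma appears before this one and needs only $\mathbf{Bv} = \lambda\mathbf{v}$, this is perfectly legitimate. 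A small bonus of your route: it never needs a neighbor of $k$ distinct from $l$, so it covers uniformly the configuration where every $G$-edge at $k$ joins $k$ to $l$ (parallel edges, only one of which lies in the support), a case where the paper's choice of $z$ would have to be replaced by the second parallel edge; conversely, the paper's route stays at the level of a single eigenvector equation and does not lean on the summed identity of Lemma \ref{lem:NB-in-cent}. One nitpick: your dispatch of multi-edges (``a multi-edge between $k$ and $l$ would likewise give degree $\geq 2$'') holds only when both parallel edges are in the support; if just one is, $\deg_H(k)=1$ can still occur --- but, as noted, your main argument handles that case without modification, so nothing breaks.
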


\begin{proof}
Assume there is a node $x \in H$ with degree $1$ in $H$ and degree $d>1$ in $G$.
Suppose $y$ is the only neighbor of $x$ that exists in $H$.
We show that both $\mathbf{v}_{x\to y}=0$ and $\mathbf{v}_{y\to x}=0$, which is a contradiction.

Since $\mathbf{v}$ is an eigenfunction, we have $\lambda\mathbf{v}_{x\to y} = \sum_{i \neq y} a_{ix} \mathbf{v}_{i \to x}$.
But the right-hand side is zero since otherwise $x$ would have degree greater than $1$ in $H$.
Since $\lambda \neq 0$, we have $\mathbf{v}_{x \to y} = 0$.

Let $z \in G \setminus H$ be a neighbor of $x$ with $z \neq y$.
We can always choose such $z$ because $d>1$.
We now have $0 = \lambda \mathbf{v}_{x \to z} = \mathbf{v}_{y \to x} + \sum_{i \neq y} a_{ix} \mathbf{v}_{i \to x} - \mathbf{v}_{z \to x} = \mathbf{v}_{y \to x}$, which finishes the proof.
\end{proof}

Most of the main results that follow are essentially corollaries of the upcoming Key Lemma, which determines the behavior of a NB eigenfunction on a specific type of subgraph called \emph{NB chain}.

\begin{definition}[\bf{NB chain}]
Suppose $G$ has a NB path of the form $k = i_{0}, i_{1}, \ldots, i_{p-1}, i_{p} = l$, where the nodes at the endpoints $k$ and $l$ have degrees strictly greater than $2$ and each $i_{j}$ has degree exactly $2$ for $j = 1, \ldots, p-1$.
This path is called an \emph{NB chain} of length $p$.
Note $k$ and $l$ may in fact be the same node, in which case this is a \emph{closed NB chain}.
If two nodes $k$ and $l$ with degrees greater than $2$ are neighbors, the path $k=i_{0},i_{1}=l$ is a NB chain of length one.
Note every graph with minimum degree at least $2$ that is not a cycle must always contain at least one NB chain.
\end{definition}

\begin{lemma}[\bf{Key Lemma}]\label{lem:path-degree-2}
Let $G$ have NB matrix $\mathbf{B}$ with $\mathbf{B v} = \lambda \mathbf{v}$ and $|\lambda|=1$.
Suppose $k$ and $l$ are two nodes, not necessarily distinct, with degrees greater than $2$ joined by a NB chain $k=i_{0},i_{1},\ldots,i_{p}=l$ of length $p$.
If $\mathbf{v}$ is nonzero on at least one of the edges $i_{j}\to i_{j+1}$, then it is nonzero on all of them, and in that case we have $\lambda^{2p}=1$. 
\end{lemma}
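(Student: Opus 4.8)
The plan is to track the eigenfunction equation \eqref{eqn:evector} along the chain and exploit the fact that the interior nodes $i_1,\dots,i_{p-1}$ have degree exactly $2$, which makes the recursion essentially one–dimensional. First I would set up notation: write $x_j \coloneqq \mathbf{v}_{i_j \to i_{j+1}}$ and $y_j \coloneqq \mathbf{v}_{i_{j+1} \to i_j}$ for $j = 0,\dots,p-1$, i.e.\ $x_j$ is the value ``forward along the chain'' and $y_j$ the value ``backward along the chain'' on the $j$-th edge. For an interior node $i_j$ with $1 \le j \le p-1$, the only neighbors are $i_{j-1}$ and $i_{j+1}$, so $\vintok{v}{i_j}$ has only one term coming from the chain in each of the two relevant instances of \eqref{eqn:evector}. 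Plugging $\lambda \mathbf{v}_{i_j \to i_{j+1}} + \mathbf{v}_{i_{j+1}\to i_j} = \vintok{v}{i_j}$ and the analogous equation for $i_j \to i_{j-1}$ gives two linear relations; eliminating the unknowns cleanly shows that the pair $(x_j, y_{j-1})$ determines $(x_{j+1}, y_j)$ and vice versa via an invertible $2\times 2$ transfer matrix $T$ (with $\det T$ a power of $\lambda$, hence nonzero since $|\lambda| = 1$).

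Next I would use invertibility of the transfer matrix to get the propagation of nonvanishing. Concretely, $(x_{j+1}, y_j) = T (x_j, y_{j-1})$ with $T$ invertible means that the two-dimensional vector at step $j$ is zero if and only if the vector at step $j+1$ is zero. A small subtlety: nonvanishing of $\mathbf{v}$ on a \emph{single} directed edge $i_j \to i_{j+1}$ is the hypothesis, but I should check that this forces the full pair $(x_j,y_{j-1})$ (or $(x_{j+1},y_j)$) to be nonzero so the transfer argument applies — this follows because if $x_j \ne 0$ but the whole propagated vector vanished at some later step, invertibility would force $x_j = 0$, a contradiction. Chaining this from one end of the chain to the other shows $\mathbf{v}$ is nonzero on every edge $i_j \to i_{j+1}$, $j = 0, \dots, p-1$, which is the first assertion.

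For the eigenvalue claim $\lambda^{2p} = 1$, I would close the loop using the endpoint nodes. Because $k = i_0$ and $l = i_p$ have degree $> 2$, there is no constraint forcing $\vintok{v}{k}$ or $\vintok{v}{l}$ to vanish; instead I would combine the two directed versions of \eqref{eqn:evector} at $i_0$ (relating $x_0, y_0$ and $\vintok{v}{k}$) and the analogous relations walking back along the chain, composing the transfer matrix $p$ times to express the ``boundary data'' at $l$ in terms of that at $k$. The cleanest route is to consider the NB walk of length $2p$ that goes from $k$ into the chain to $l$ and back to $k$ — or, more directly, to observe that restricting attention to the cycle-like behavior of $\mathbf{v}$ on the chain, the operator $\mathbf{B}^{2p}$ acts on the relevant coordinates as multiplication by a scalar that must equal $1$ (this is the same mechanism as in \Cref{exm:cycle-support} and \Cref{rmk:circle}, where a length-$r$ cycle forces $\lambda^r = 1$; here the round trip through a chain of length $p$ has combinatorial length $2p$). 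Making this precise amounts to showing $T^p$ is a scalar matrix, or equivalently that the forward value $x_0$ returns to a multiple $\lambda^{2p} x_0$ of itself after the round trip, and since $x_0 \ne 0$ we conclude $\lambda^{2p} = 1$.

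The main obstacle I expect is the bookkeeping at the two endpoints and the precise form of the transfer matrix: one must be careful that at $i_0 = k$ the backtrack term subtracted in \eqref{eqn:operator} is the one coming from $i_1$, and that the ``other'' edges at $k$ (the ones leaving the chain) contribute to $\vintok{v}{k}$ but do not obstruct the argument — the non-leaky property (\Cref{def:leaky}) is not available at $k$ since $d_k > 2$, so unlike in \Cref{exm:cycle-support} I cannot conclude $\vintok{v}{k} = 0$, and the scalar identity $\lambda^{2p}=1$ has to come purely from the internal consistency of the $2\times 2$ recursion rather than from a vanishing boundary term. Handling the degenerate cases $p = 1$ (adjacent high-degree nodes) and $k = l$ (closed chain) will require a sentence or two each but should reduce to the same computation.
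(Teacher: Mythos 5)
There is a genuine gap, and it stems from a misreading of the non-leaky property. You write that ``the non-leaky property is not available at $k$ since $d_k > 2$, so I cannot conclude $\vintok{v}{k} = 0$,'' but this is exactly backwards: non-leakiness says $\left(d_k - 2\right)\vintok{v}{k} = 0$ at every node, which is vacuous at degree-$2$ nodes and forces $\vintok{v}{k} = 0$ precisely when $d_k > 2$. The paper's proof hinges on this: since $|\lambda| = 1$ implies $\mathbf{B^{*}B\,v} = \mathbf{B B^{*} v} = \mathbf{v}$, Lemma \ref{lem:bbt} gives $\vintok{v}{k} = \vintok{v}{l} = 0$ at the two endpoints, and these two boundary equations, combined with the interior degree-$2$ relations, are what produce $\left(\lambda^{2p}-1\right)\mathbf{v}_{l \to i_{p-1}} = 0$.

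Your proposed substitute --- deriving $\lambda^{2p} = 1$ ``purely from the internal consistency of the $2\times 2$ recursion'' --- cannot work. At an interior node $i_j$ of degree $2$ the eigenfunction equation decouples into $\lambda \mathbf{v}_{i_j \to i_{j+1}} = \mathbf{v}_{i_{j-1} \to i_j}$ and $\lambda \mathbf{v}_{i_j \to i_{j-1}} = \mathbf{v}_{i_{j+1} \to i_j}$, so your transfer matrix is simply $\diag\left(\lambda^{-1}, \lambda\right)$; its $p$-th power is a scalar matrix if and only if $\lambda^{2p} = 1$, which is the statement to be proved, not an input. The interior recursion is consistent for \emph{every} unitary $\lambda$ and every nonzero starting value, so no constraint on $\lambda$ can emerge without the vanishing boundary terms at $k$ and $l$; likewise there is no ``round trip'' through the chain to exploit, since an NB walk leaving the chain at $l$ cannot backtrack into it and the endpoints couple to the rest of the graph through their other edges. (A smaller point: once you have the decoupled forward recursion $\mathbf{v}_{i_j \to i_{j+1}} = \lambda^{-j}\,\mathbf{v}_{i_0 \to i_1}$, nonvanishing on all forward edges is immediate; your pair-based invertibility argument only shows the pair is nonzero, not each forward component.) Restoring the boundary conditions $\vintok{v}{k} = \vintok{v}{l} = 0$ turns your setup into essentially the paper's proof.
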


\begin{proof}
Since $\mathbf{v}$ is an eigenfunction of unitary eigenvalue, it satisfies $\mathbf{B^{*}B} \mathbf{v} = \mathbf{BB^{*}} \mathbf{v} = \mathbf{v}$.
By Lemma \ref{lem:bbt}, this implies $\vintok{v}{k} = \vintok{v}{l} = 0$ since $d_{k},d_{l}>2$.
To fix ideas, suppose $k$ and $l$ are neighbors, i.e. $p=1$.
The fact that $\vintok{v}{k} = 0$ and \cref{eqn:evector} give $\lambda \mathbf{v}_{k \to l} + \mathbf{v}_{l \to k} = 0 = \lambda \mathbf{v}_{l \to k} + \mathbf{v}_{k \to l}$.
Multiply the first equation by $\lambda$ and replace in the second to obtain $\left(\lambda^{2}-1\right)\mathbf{v}_{k\to l}=0$.
Since $\mathbf{v}_{k\to l}$ is nonzero we have $\lambda^{2}=1$.

Now assume $p>1$. Similarly to above, we have 
\begin{equation}\label{eqn:key-1}
\lambda\mathbf{v}_{k\to i_{1}}+\mathbf{v}_{i_{1}\to k}=0\quad\text{and}\quad\lambda\mathbf{v}_{l\to i_{p-1}}+\mathbf{v}_{i_{p-1}\to l}=0.
\end{equation}
Further, since $d_{i_{j}}=2$ for each $j=1,\ldots,p-1$, we have the following $2 \left( p - 1 \right)$ equations
\begin{equation}\label{eqn:key-2}
\lambda\mathbf{v}_{i_{j}\to i_{j+1}}=\mathbf{v}_{i_{j-1}\to i_{j}}\quad\text{and}\quad\lambda\mathbf{v}_{i_{j}\to i_{j-1}}=\mathbf{v}_{i_{j+1}\to i_{j}}.
\end{equation}
Multiplying $\lambda \mathbf{v}_{l \to i_{p-1}} + \mathbf{v}_{i_{p-1} \to l} = 0$ by $\lambda^{2p-1}$ and using eqs. \eqref{eqn:key-2} iteratively yields ${ \left( \lambda^{2p} - 1 \right) \mathbf{v}_{l \to i_{p-1}} = 0 }$.
But eqs. \eqref{eqn:key-2} and the fact that $\mathbf{v}$ is nonzero in at least one of the edges $i_j \to i_{j+1}$ imply that $\mathbf{v}_{l \to i_{p-1}} \neq 0$.
Thus we have $\lambda^{2p} = 1$.
\end{proof}

%\begin{remark*}\label{rem:key}
%Note the preceding argument is valid even when $k$ and $l$ are the same node and the NB chain joining them is closed.
%Note also that for each NB chain, once the components of $\mathbf{v}$ at $k,l$ have been fixed, the remaining components, those incident to the nodes of degree $2$, are fixed as well.
%That is to say, the \cref{eqn:key-1,eqn:key-2} define a system of $2p$ equations with $2p$ variables. 
%\end{remark*}

\begin{theorem}\label{thm:roots-of-unity}
Let $G$ be a graph with NB matrix $\mathbf{B}$, and suppose $\mathbf{B v} = \lambda \mathbf{v}$ with $\left| \lambda \right| = 1$.
Then, $\lambda$ is a root of unity.
\end{theorem}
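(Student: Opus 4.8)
The plan is to derive the theorem as a quick consequence of the Key Lemma (\Cref{lem:path-degree-2}) together with the circle-graph case recorded in \Cref{rmk:circle}. Since $|\lambda| = 1$ we have $\lambda \neq 0$, so all of the earlier reductions about nonzero eigenvalues are available: after repeatedly deleting degree-$1$ nodes — an operation that (by the results cited in \Cref{sec:background}) changes neither the nonzero spectrum nor the restriction of $\mathbf{v}$ to the surviving edges — we may assume $G$ has minimum degree at least $2$. This process cannot exhaust the graph, since the NB matrix of a forest is nilpotent and $\lambda \neq 0$. Finally, if $G$ is a circle graph we are already done, because \Cref{rmk:circle} says its NB eigenvalues are exactly the $r$th roots of unity, $r$ being the length of the cycle.

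So assume henceforth that $G$ is not a circle graph. Then every edge of $G$ lies in a (possibly closed) NB chain: starting from any edge and extending it through degree-$2$ nodes in both directions, the process must terminate at nodes of degree greater than $2$, for otherwise it would close up into a cycle all of whose vertices have degree $2$, forcing $G$ to be that very cycle. Now $\mathbf{v} \neq 0$, so pick an edge $i_j - i_{j+1}$ lying in its support, and let $k = i_0, i_1, \ldots, i_p = l$ be the NB chain of length $p$ containing that edge. Reorienting the chain if necessary — both of its endpoints have degree greater than $2$, and the notion of ``length'' is symmetric in $k$ and $l$ — we may assume that $\mathbf{v}$ is nonzero on the forward-oriented edge $i_j \to i_{j+1}$. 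The Key Lemma then yields $\lambda^{2p} = 1$, so $\lambda$ is a $(2p)$th root of unity, which is exactly the claim.

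The mathematical content is carried entirely by \Cref{lem:path-degree-2}, so the only genuine work is bookkeeping of two kinds. First, one must justify carefully that, under the standing assumptions, every edge of a non-circle graph belongs to an NB chain — including the closed case $k = l$, and taking care with self-loops and multi-edges, which are permitted here. Second, one must check that the hypothesis ``$\mathbf{v}$ nonzero on some $i_j \to i_{j+1}$'' required by the Key Lemma is insensitive to which orientation of the support edge happens to be nonzero; this holds because reversing the chain merely swaps its two endpoints without altering their degrees, and the relations \eqref{eqn:key-1}--\eqref{eqn:key-2} are already symmetric under that reversal. Neither point needs an idea beyond the definitions, so I expect the proof to be short.
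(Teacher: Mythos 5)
Your proof is correct and takes essentially the same route as the paper: reduce to minimum degree at least $2$, dispose of the circle-graph case via \Cref{rmk:circle}, and obtain $\lambda^{2p}=1$ from the Key Lemma (\Cref{lem:path-degree-2}). The only (harmless) difference is that the paper applies the Key Lemma to an NB chain of the support graph $H$, using \Cref{lem:support-md2} to ensure $H$ has minimum degree $2$, whereas you apply it to the NB chain of $G$ passing through an edge of the support, together with your reorientation remark.
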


\begin{proof}
Let $H$ be the support of $\mathbf{v}$.
By Lemma \ref{lem:support-md2}, $H$ has minimum degree at least $2$.
If $H$ is a circle graph, say of length $r$, then immediately every NB eigenvalue is a $r^{th}$ root of unity, as pointed out in \Cref{rmk:circle}.
If, on the other hand, $H$ contains at least two cycles, then it must contain at least one NB chain.
In that case, let $p$ be the length of some NB chain in $H$.
By Lemma \ref{lem:path-degree-2}, we have $\lambda^{2p} = 1$.
\end{proof}

The support of eigenfunctions of real unitary eigenvalue is characterized by Corollary \ref{cor:bases}.
The following result characterizes the nonreal case.

\begin{theorem}\label{thm:support}
Let $G$ be a graph with at least two cycles, with NB matrix $\mathbf{B}$, and suppose $\mathbf{B v} = \lambda \mathbf{v}$ with unitary nonreal $\lambda$.
Let $p^*$ be the smallest positive integer such that $\lambda^{p^*} = 1$.
Let $H$ be the support of $\mathbf{v}$.
Then, each edge of $H$ must be part of a unique NB chain and the length of any NB chain in $H$ is a multiple of $p^*$.
\end{theorem}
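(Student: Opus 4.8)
The plan is to show two things: first, that every edge of $H$ lies in an NB chain; second, that once an edge of a chain carries a nonzero value, the whole chain is ``activated'' and its length must be divisible by $p^*$. The second half is almost immediate from the Key Lemma (Lemma \ref{lem:path-degree-2}): if a chain has length $p$ and $\mathbf{v}$ is nonzero somewhere along it, then $\lambda^{2p}=1$, so $p^* \mid 2p$; since $\lambda$ is nonreal, $p^* \geq 3$, and I will need to upgrade $p^* \mid 2p$ to $p^* \mid p$. The point is that if $p^*$ is odd then $p^* \mid 2p$ already gives $p^* \mid p$, and if $p^*$ is even one argues more carefully --- actually the cleanest route is to reapply the chain relations \eqref{eqn:key-2} to see that going around (or across) the chain multiplies the edge value by $\lambda^p$ with a sign, forcing $\lambda^p = \pm 1$; combined with $\lambda$ nonreal and $\lambda^{p^*}=1$ with $p^*$ minimal, this yields $p^* \mid p$. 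So the structural claim about chain lengths reduces to a short number-theoretic cleanup of the Key Lemma's conclusion.

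The first half --- that every edge of $H$ is part of a \emph{unique} NB chain --- is where the real work lies, and it splits into existence and uniqueness. For existence, I would argue that $H$ has minimum degree at least $2$ (Lemma \ref{lem:support-md2}), so $H$ is either a circle graph or contains a node of degree $\geq 3$; in the latter case every edge of $H$ lies on a maximal degree-$2$ path whose endpoints have degree $\geq 3$ in $H$, which is exactly an NB chain \emph{of $H$}. The subtlety is that ``degree'' must be measured in $H$, not in $G$: a node of degree $2$ in $H$ might have higher degree in $G$, but such a node cannot leak (Definition \ref{def:leaky}), and the Key Lemma as stated wants chain-interior nodes of degree exactly $2$ in the ambient graph. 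I expect this is the main obstacle: one must either restate/reuse the Key Lemma argument intrinsically on $H$, or invoke the detachment principle from \Cref{sec:detachment} --- since $\mathbf{v}$ is non-leaky, its restriction to $H$ behaves exactly like an eigenfunction of the NB matrix of $H$ alone, so one may simply replace $G$ by $H$ and apply everything proved so far. I would phrase the proof that way: detach, so that $H$ itself is a graph with minimum degree $\geq 2$ and $\mathbf{v}$ is a full-support unitary NB eigenfunction of $H$, and now ``NB chain'' unambiguously means NB chain of $H$.

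For uniqueness --- each edge lies in only one NB chain --- the argument is combinatorial: maximal degree-$2$ paths in any graph are edge-disjoint, since two distinct such maximal paths sharing an edge would share all of it up to a node of degree $\geq 3$, contradicting maximality; the only failure mode is when $H$ is itself a cycle (every edge in one closed chain, but then there is no node of degree $\geq 3$, and this case is excluded or handled by \Cref{rmk:circle}) or the degenerate case where $H$ has exactly one node of degree $\geq 3$ with several loops/chains attached, which is still fine because the chains partition the edge set. So uniqueness is a routine fact about the ``topological'' decomposition of a minimum-degree-$2$ graph into chains between branch vertices. Putting it together: detach to $H$; decompose $E(H)$ into edge-disjoint NB chains; since $\mathbf{v}$ has full support on $H$, every such chain carries a nonzero edge value, so the Key Lemma applies to each, giving $\lambda^{2p_i}=1$; finally the number-theoretic step promotes this to $p^* \mid p_i$ for every chain. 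The one place to be careful is the closed-NB-chain case ($k = l$): the Key Lemma covers it, and I would double-check that the relations \eqref{eqn:key-1}–\eqref{eqn:key-2} close up correctly around the loop to still deliver $\lambda^{2p}=1$, which they do since the endpoint equation $\vintok{v}{k}=0$ is the same equation used twice.
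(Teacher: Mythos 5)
Your structural half is essentially the paper's own route, done more carefully: the paper simply grows the unique maximal degree-$2$ path through a given edge of $H$, and your detachment remark (non-leakiness lets you treat $\mathbf{v}$ as a full-support eigenfunction of $H$ itself, so that ``degree $2$'' and ``NB chain'' are measured in $H$) together with the edge-disjointness of maximal degree-$2$ paths is exactly what makes that step legitimate; the circle-graph/closed-chain degeneracy is glossed over equally by both you and the paper. The genuine gap is the ``number-theoretic cleanup'' in the even case. From the chain relations \eqref{eqn:key-2} together with the endpoint conditions one gets precisely $\lambda^{p}=\pm1$ on an activated chain of length $p$, and the case $\lambda^{p}=-1$ cannot be excluded: it is consistent with every equation available, and minimality of $p^{*}$ buys nothing, since $\lambda^{p}=-1$ just says $p^{*}$ is even and $p$ is an odd multiple of $p^{*}/2$, so $p^{*}\nmid p$. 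The inference ``$\lambda^{p}=\pm1$, $\lambda$ nonreal, $p^{*}$ minimal $\Rightarrow p^{*}\mid p$'' is a non sequitur, and no other argument can close it, because the divisibility claim itself fails for even $p^{*}$.

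Concretely, take $X=K_{2,3}$, which is the $2$-subdivision of the multigraph on two vertices joined by three parallel edges. That multigraph is bipartite with $m-n+1=2$, so by the paper's own \Cref{thm:subdiv} the square roots of $-1$ are NB eigenvalues of $X$ with geometric multiplicity $2$; taking $\lambda=i$ gives $p^{*}=4$, and a suitable combination of the two eigenfunctions (the lift of the full-support eigenfunction of $-1$ on the collapsed multigraph) has support $H=X$, all of whose NB chains have length $2$, not a multiple of $4$. So the most you can extract from the Key Lemma \ref{lem:path-degree-2} is $p^{*}\mid 2p$, i.e.\ chain lengths are multiples of $p^{*}$ when $p^{*}$ is odd and of $p^{*}/2$ when $p^{*}$ is even --- which is exactly the form the paper relies on later (Corollary \ref{lem:nodes-in-n-subdiv} allows order $2p$ on a chain of length $p$, and \Cref{thm:main} assembles chains whose lengths are multiples of $q$ or $q/2$). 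Be aware that the paper's own proof of \Cref{thm:support} is loose at the same spot: from $\lambda^{2p}=\lambda^{2p'}=1$ it concludes that one length divides the other and that $\lambda^{\hat p}=1$ for the common divisor $\hat p$, neither of which follows. So the missing step in your writeup is not a trick you overlooked but an over-statement in the theorem as phrased; the honest fix is to prove the $p^{*}\mid 2p$ version.
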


\begin{proof}
Note Theorem \ref{thm:roots-of-unity} guarantees that $p^*$ exists and the assumption that $\lambda$ is nonreal implies $p^* > 2$.

Now take an edge $x \to y$ in $H$.
If both $x$ and $y$ have degrees greater than $2$, then $x \to y$ is a NB chain of length $1$.
Otherwise, if $y$ has degree exactly $2$, let $z \neq x$ be its other neighbor and consider the chain $x \to y \to z$.
Now repeat this process extending the chain by one edge in either direction until both endpoints have degrees greater than $2$; this is always possible since $G$ is not a circle graph.
This process defines the unique NB chain that contains $x \to y$.

Let $\mathcal{P}$ be the set of all lengths of NB chains in $H$.
Theorem \ref{thm:roots-of-unity} guarantees that each such length must be greater than $1$, because otherwise we would have $\lambda^2 = 1$.
Take two different lengths $p$ and $p'$.
Again by Theorem \ref{thm:roots-of-unity} we have $\lambda^{2p} = 1 = \lambda^{2p'}$.
But $\lambda^2 \neq 1$ and thus $p$ and $p'$ cannot be coprime, i.e., one must divide the other.
Repeating the same argument for each pair of lengths, and letting $\hat{p}$ be the maximum common divisor of $\mathcal{P}$, we must have $\lambda^{\hat{p}} = 1$ and therefore $\hat{p}$ is a multiple of $p^*$.
\end{proof}

Theorem \ref{thm:support} establishes that the graph induced by the support of a NB eigenfunction must be comprised entirely of NB chains all of whose lengths are multiples of the same number.
These graphs are otherwise known as \emph{graph subdivisions}, and are the subject of the next section.

%%%%%%%%%%%%%%%%%%%%%%%%%%%%%%%%%%%%%%%%%%%%%%%%%%%%%%%%%%%%%%%%
\section{Graph subdivisions }\label{sec:subdivisions}

\begin{definition}[\bf{Graph subdivision}]
Given a graph $G$ (possibly with multi-edges or self-loops) its \emph{$r$-subdivision} is the graph created by inserting $r-1$ nodes in each edge of $G$.
In other words, each edge $(k,l)$ in $G$ is replaced by a chain $k=i_{0},i_{1},\ldots,i_{r}=l$ where each $i_{j}$ has degree $2$ for $j=1,\ldots,r-1$.
Further, note that if $G$ is the $r$-subdivision of $G'$ and $G'$ is in turn the $r'$-subdivision of some $G''$, then $G$ is the $\left( r \times r' \right)$-subdivision of $G''$.
\end{definition}

Consider a graph $H$ that is the $p$-subdivision of some other graph $G$.
If $G$ has no nodes of degree less than $3$, then every NB chain in $H$ has length exactly $p$.
If $G$ has one node of degree $2$, then $H$ has one NB chain of length exactly $2p$.
In general, every edge in $H$ is contained in a unique NB chain and the length of this chain must be a multiple of $p$.
See \cref{fig:chain_examples}.
In light of this observation, Theorem \ref{thm:support} can be rephrased as follows.

\begin{figure}
\centering
\includegraphics[width=0.75\textwidth]{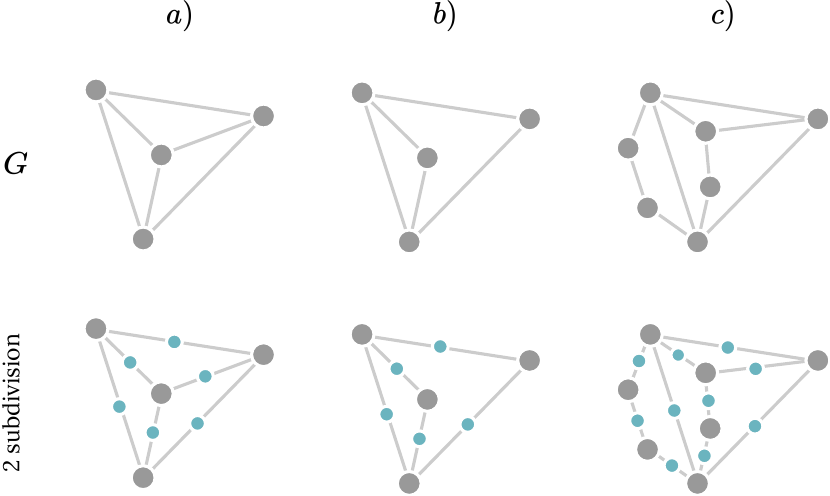}
\caption{\label{fig:chain_examples}
\textbf{NB chains and graph subdivisions.}
A graph $G$ (\textbf{top}) and its $p$-subdivision (\textbf{bottom}) for the case $p=2$.
The length of every chain in the subdivision is a multiple of $p$.
\textbf{a)} If the graph has no nodes of degree $2$, then every chain in the subdivision has length $p$.
\textbf{b)} Since $G$ has nodes of degree $2$, its subdivision has chains of length $2p$.
\textbf{c)} In general, the maximum length of a NB chain in the subdivision is $p$ times the maximum length of a NB chain in the original graph $G$.
}
\end{figure}

\begin{corollary}[\textbf{of Theorem \ref{thm:support}}]
$H$ must be the $p^*$-subdivision of some other graph.\qed
\end{corollary}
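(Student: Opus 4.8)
The plan is to unpack what Theorem~\ref{thm:support} asserts about the graph $H$ induced by the support of $\mathbf{v}$ and to translate its two conclusions --- ``each edge lies in a unique NB chain'' and ``every NB chain has length a multiple of $p^*$'' --- into the statement that $H$ is literally the $p^*$-subdivision of some graph $G$. First I would fix notation: by Lemma~\ref{lem:support-md2} (and since $\mathbf{v}$ is an eigenfunction of a nonzero eigenvalue), $H$ has minimum degree at least $2$; and since $\lambda$ is nonreal, $p^* > 2$, so in particular $H$ is not a circle graph.

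The construction of $G$ is the heart of the argument. I would let $W = \{ w \in V(H) : d_w \neq 2 \}$ be the set of ``branch'' nodes of $H$; because $H$ is connected, has minimum degree $\geq 2$, and is not a cycle, $W$ is nonempty. By Theorem~\ref{thm:support} every edge of $H$ lies in a unique NB chain, and the endpoints of each NB chain are precisely nodes of $W$ (or a single repeated node, for a closed chain). So the NB chains partition $E(H)$, and each has length a multiple of $p^*$, say $p^* \cdot m_C$ for chain $C$, with $m_C \geq 1$. Now I define the intermediate graph $G'$: its vertex set is $W$, and for each NB chain $C$ with endpoints $k, l \in W$ (allowing $k = l$), put an edge $(k,l)$ in $G'$; a closed chain contributes a self-loop, and parallel chains between the same pair of branch nodes contribute multi-edges --- which is exactly why the definition of graph subdivision in this paper permits self-loops and multi-edges. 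Then $H$ is obtained from $G'$ by replacing the edge corresponding to $C$ by a chain of $p^* \cdot m_C$ edges; equivalently, $H$ is the $p^*$-subdivision of the graph $G$ obtained from $G'$ by further subdividing the edge corresponding to $C$ into $m_C$ pieces (inserting $m_C - 1$ degree-$2$ nodes). One then checks directly that the $p^*$-subdivision of this $G$ reproduces $H$: each original edge of $G$ becomes a chain of $p^*$ edges, and concatenating the $m_C$ such chains along one chain of $G$ gives back the length-$p^* m_C$ NB chain $C$ of $H$. Degrees of branch nodes are preserved (subdivision does not touch them), and the inserted nodes all have degree $2$, matching $H$.

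The main obstacle I anticipate is purely bookkeeping rather than conceptual: one has to verify that the assignment of NB chains to edges of $G'$ is well-defined and exhaustive --- i.e. that distinct NB chains share no edges (uniqueness from Theorem~\ref{thm:support}), that every edge of $H$ is covered (also from Theorem~\ref{thm:support}), and that the two endpoints of a chain are genuinely in $W$ (true because an NB chain is by definition maximal with degree-$2$ interior, so its endpoints have degree $\neq 2$, hence $> 2$ since $H$ has no degree-$1$ nodes). A second, minor subtlety is the case analysis for closed NB chains ($k = l$) and for a chain of length one joining two adjacent branch nodes, but in both cases the subdivision definition handles them uniformly. Since all of these facts are immediate from Theorem~\ref{thm:support} and the definitions already in place, the corollary follows; this is why it is stated with a bare \qed in the excerpt.
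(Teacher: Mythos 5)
Your construction of the base graph---take the set $W$ of branch nodes, form an intermediate graph $G'$ with one edge per NB chain (allowing self-loops for closed chains and multi-edges for parallel chains), then split the edge coming from a chain of length $p^*m_C$ into $m_C$ pieces---is exactly the observation the paper makes in the paragraph preceding the corollary, which is why the corollary is recorded there without a written proof; for the generic case your bookkeeping is fine. However, one step is wrong as stated: you claim that since $\lambda$ is nonreal we have $p^*>2$, ``so in particular $H$ is not a circle graph.'' That inference is false. The support of an eigenfunction of a nonreal root of unity can perfectly well be a single circle: this is precisely the situation of Example~\ref{exm:cycle-support} and of the circle-gluing analysis of \Cref{sec:gluing} (Proposition~\ref{pro:glue-cycle}); for instance, a hexagon attached to the rest of $G$ at one node carries an eigenfunction whose eigenvalue is a primitive sixth root of unity, and then $H$ is that hexagon. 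In this case your set $W$ is empty and the construction of $G'$ collapses. The case is easy to repair---if $H$ is a circle of length $r$ then $\lambda^{r}=1$, so $p^*\mid r$ and $H$ is the $p^*$-subdivision of a circle of length $r/p^*$, or of the one-node, one-loop graph $O$ when $r=p^*$ (cf.\ Example~\ref{exm:circle-as-subdiv})---but it must be handled explicitly, and the reason you give for excluding it does not hold.

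A smaller point: you also assert that $H$ is connected, which is not automatic, since an eigenfunction may be a sum of eigenfunctions of the same eigenvalue supported on disjoint subgraphs, and the paper's connectedness convention applies to $G$, not to the support. This is harmless---run your construction separately on each connected component of $H$, treating circle components as above, and note that the base graph of a subdivision may be disconnected---but it should be stated rather than assumed.
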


For this reason, we dedicate this section to studying the NB spectrum of graph subdivisions.
Kotani and Sunada \cite{kotani} were first to note the following relationship involving graph subdivisions.
Let $H$ be the $r$-subdivision of $G$, and let $\mathbf{B}_H, \mathbf{B}_G$ be their NB matrices, respectively. 
Then,
\begin{equation}\label{eqn:zeta-subdiv}
\det\left(t \mathbf{I} - \mathbf{B}_H \right) = \det \left( t^{r} \mathbf{I} - \mathbf{B}_G \right).
\end{equation}
In other words, the NB eigenvalues of $H$ are exactly the $r^{th}$ roots of the NB eigenvalues of $G$.

The eigenfunctions of a subdivision can also be constructed from the eigenfunctions of the original graph and vice versa, as the next result shows.

\begin{proposition}\label{pro:subdiv-functions}
Let $H$ be the $r$-subdivision of a graph $G$ and suppose $\mathbf{v}$ is an eigenfunction of $H$ with unitary eigenvalue $\lambda$.
Let $k,l$ be neighbors in $G$, and let $k = k_{0}(l), k_{1}(l), \ldots, k_{r}(l) = l$ be the corresponding NB chain in $H$.
Define the vector $\mathbf{u}$ as 
\begin{subequations}
\begin{align}
\mathbf{u}_{k \to l} &\coloneqq \mathbf{v}_{k \to k_{1}(l)},\quad \text{for each \ensuremath{k} with \ensuremath{d_{k}>2},} \label{eqn:def-u-a} \\
\mathbf{u}_{k \to j} &\coloneqq \mathbf{u}_{i \to k} / \lambda^{r},\quad \text{for each \ensuremath{k} with \ensuremath{d_{k}=2} and distinct neighbors \ensuremath{i,j}.} \label{eqn:def-u-b}
\end{align}
\end{subequations}
Then, $\mathbf{u}$ is a NB eigenfunction of $G$ with eigenvalue $\lambda^{r}$.
We say that $\mathbf{u}$ is the \emph{vector subdivision}, or simply the \emph{subdivision}, of $\mathbf{v}$.
\end{proposition}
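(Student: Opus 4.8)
The plan is to verify directly that the vector $\mathbf{u}$ defined by \eqref{eqn:def-u-a}--\eqref{eqn:def-u-b} satisfies the eigenfunction equation \eqref{eqn:evector} for $\mathbf{B}_G$ at eigenvalue $\lambda^r$, that is, $\lambda^r \mathbf{u}_{k\to l} + \mathbf{u}_{l\to k} = \vintok{u}{k}$ for every oriented edge $k\to l$ of $G$. The key intermediate fact I would first establish is that, along the NB chain $k = k_0(l),\dots,k_r(l) = l$ in $H$, the eigenfunction equation \eqref{eqn:evector} applied at each interior degree-$2$ node $k_j(l)$ gives the recursion $\lambda \mathbf{v}_{k_{j}(l)\to k_{j+1}(l)} = \mathbf{v}_{k_{j-1}(l)\to k_j(l)}$, exactly as in \eqref{eqn:key-2} of the Key Lemma. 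Iterating this $r$ times yields $\mathbf{v}_{k\to k_1(l)} = \lambda^{-r}\,\mathbf{v}_{k_{r-1}(l)\to l}$, and symmetrically $\mathbf{v}_{l\to k_{r-1}(l)} = \lambda^{-r}\,\mathbf{v}_{k_1(l)\to k}$. In particular this shows $\mathbf{u}_{l\to k} = \mathbf{v}_{l\to k_{r-1}(l)}$ — i.e., that the two clauses of the definition are consistent and that $\mathbf{u}$ records the value of $\mathbf{v}$ on the \emph{first} oriented edge of the chain leaving each endpoint, with the interior values of $\mathbf{v}$ recoverable by dividing by powers of $\lambda$.

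Next I would rewrite $\vintok{u}{k}$ in terms of $\mathbf{v}$. For a node $k$ with $d_k > 2$, by definition $\vintok{u}{k} = \sum_i a_{ik}^{(G)} \mathbf{u}_{i\to k} = \sum_i a_{ik}^{(G)} \mathbf{v}_{k_{r-1}(i)\to k}$, which by the observation above is precisely $\vintok{v}{k}$ computed in $H$ (the neighbors of $k$ in $H$ are exactly the nodes $k_{r-1}(i)$ for $i$ ranging over the $G$-neighbors of $k$). Now apply \eqref{eqn:evector} for $\mathbf{B}_H$ at the node $k$ and the oriented edge $k \to k_1(l)$: it reads $\lambda \mathbf{v}_{k\to k_1(l)} + \mathbf{v}_{k_1(l)\to k} = \vintok{v}{k}$. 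Using $\mathbf{u}_{k\to l} = \mathbf{v}_{k\to k_1(l)}$ and the chain recursion to identify $\mathbf{v}_{k_1(l)\to k} = \lambda^{r} \mathbf{v}_{l\to k_{r-1}(l)} = \lambda^{r}\mathbf{u}_{l\to k}$ — wait, more carefully: the recursion on the chain read from the $l$ end gives $\mathbf{v}_{k_1(l)\to k} = \lambda^{-(r-1)}\mathbf{v}_{l\to k_{r-1}(l)}$, so I multiply the $H$-equation through by $\lambda^{r-1}$ to get $\lambda^{r}\mathbf{u}_{k\to l} + \mathbf{v}_{l\to k_{r-1}(l)} = \lambda^{r-1}\vintok{v}{k}$. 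Since $|\lambda| = 1$, Lemma \ref{lem:bbt} forces $\vintok{v}{k} = 0$ whenever $d_k^{(H)} = d_k^{(G)} > 2$, so in fact both sides carrying $\vintok{v}{k}$ vanish and the identity collapses to $\lambda^{r}\mathbf{u}_{k\to l} + \mathbf{u}_{l\to k} = 0 = \vintok{u}{k}$, which is \eqref{eqn:evector} for $\mathbf{B}_G$ at eigenvalue $\lambda^r$. (This also re-proves, consistently, the appearance of $\lambda^r$.)

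The one subtlety — and the step I expect to be the main obstacle — is the bookkeeping when $G$ itself has nodes of degree $2$: then the clause \eqref{eqn:def-u-b} must be invoked, the chain in $H$ between two degree-${>}2$ nodes has length a multiple of $r$, and I must check the recursion still telescopes correctly across the degree-$2$ nodes of $G$ (where there is no constraint $\vintok{v}{\cdot} = 0$, matching the fact that \eqref{eqn:def-u-b} imposes none on $\mathbf{u}$ either). I would handle this by treating a degree-$2$ node $k$ of $G$ exactly as an interior node of a longer chain: \eqref{eqn:evector} for $\mathbf{B}_H$ at the $H$-node $k$ gives $\lambda \mathbf{u}_{k\to j} = \lambda\mathbf{v}_{k\to k_1(j)}$ and, chaining $r$ steps back, $= \mathbf{v}_{k_{r-1}(i)\to k} = \mathbf{u}_{i\to k}$ up to the factor $\lambda^{-r}$ — i.e. exactly \eqref{eqn:def-u-b}, so no contradiction arises and the equation at $k$ in $G$ is automatically satisfied. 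Finally I would note that $\mathbf{u}$ is not identically zero: if $\mathbf{v}_{k\to k_1(l)}\neq 0$ for some endpoint, then $\mathbf{u}_{k\to l}\neq 0$ directly; and some such endpoint value must be nonzero since, were $\mathbf{v}$ to vanish on all the first edges of all chains, the chain recursion \eqref{eqn:key-2} would force $\mathbf{v}\equiv 0$ on $H$.
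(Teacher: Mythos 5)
Your proposal is correct and follows essentially the same route as the paper's own proof: the chain recursion \eqref{eqn:key-2} at degree-$2$ nodes combined with non-leakiness ($\vintok{v}{k}=0$ at nodes of degree greater than $2$) to verify \eqref{eqn:evector} for $\mathbf{u}$ at eigenvalue $\lambda^r$, with the case of degree-$2$ nodes of $G$ handled by telescoping through the longer chains of $H$, which is exactly the paper's inductive Cases 2--3. The only slips are harmless constants: the intermediate fact should read $\mathbf{v}_{k\to k_1(l)}=\lambda^{\,r-1}\mathbf{v}_{k_{r-1}(l)\to l}$ rather than $\lambda^{-r}\mathbf{v}_{k_{r-1}(l)\to l}$ (you correct the exponent mid-argument where it matters), and the identification of $\vintok{u}{k}$ with $\vintok{v}{k}$ is off by a factor $\lambda^{\,r-1}$, but since both quantities vanish at nodes of degree greater than $2$ the verification is unaffected.
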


\begin{proof}
First we show that $\mathbf{u}$ satisfies 
\begin{equation*}
\lambda^{r}\mathbf{u}_{k\to l}=-\mathbf{u}_{l\to k},\quad\text{for each edge \ensuremath{k \to l} in \ensuremath{G} where \ensuremath{d_{k}>2}.}
\label{eqn:def-u-c} \tag{5.2c}
\end{equation*}
We do so in three cases.

\emph{Case 1:} Assume $k,l$ are neighbors with $d_{k},d_{l}>2$.
By \cref{eqn:def-u-a} we have $\mathbf{u}_{k \to l} = \mathbf{v}_{k \to k_{1}(l)} = -\lambda^{-1} \mathbf{v}_{k_{1}(l) \to k}$, where last equality comes from $\lambda \mathbf{v}_{k \to k_{1}(l)} + \mathbf{v}_{k_{1}(l) \to k} = \vintok{v}{k} = 0$ which holds since $\lambda$ is unitary.
Further, recall that each $k_{i}(l)$ has degree $2$ and thus
\begin{equation}
\mathbf{u}_{k\to l}=-\lambda^{-1}\mathbf{v}_{k_{1}\left(l\right)\to k}=-\lambda^{-1}\lambda^{-1}\mathbf{v}_{k_{2}\left(l\right)\to k_{1}\left(l\right)}=-\lambda^{-1}\lambda^{-2}\mathbf{v}_{k_{3}\left(l\right)\to k_{2}\left(l\right)}=\ldots=-\lambda^{-1}\lambda^{-r+1}\mathbf{v}_{l\to k_{r-1}\left(l\right)}.
\end{equation}
Since $d_{l}>2$, applying \cref{eqn:def-u-a} again yields the desired equation,
\begin{equation}
\mathbf{u}_{k\to l}=-\lambda^{-r}\mathbf{v}_{l\to k_{r-1}\left(l\right)}=-\lambda^{-r}\mathbf{u}_{l\to k}.
\end{equation}

\emph{Case 2:} Assume $k,l$ are neighbors with $d_{k}>2$ and $d_{l}=2$.
Let $i \neq k$ be the only neighbor of $l$.
In this case we use an auxiliary set of node labels: set $l_{q}(k) \coloneqq k_{r-q}(l)$ for $q=0,1,\ldots,r$. 
In this case, \cref{eqn:def-u-b} gives $\lambda^{r}\mathbf{u}_{l\to k}=\mathbf{u}_{i\to l}$. To fix ideas, assume $d_{i}>2$.
\Cref{eqn:def-u-a} yields 
\begin{equation}
\lambda^{r}\mathbf{u}_{l\to k}=\mathbf{u}_{i\to l}=\mathbf{v}_{i\to i_{1}\left(l\right)}.
\end{equation}
But each $d_{i_{j}\left(l\right)}=2$, thus
\begin{equation}
\lambda^{r}\mathbf{u}_{l\to k}=\mathbf{v}_{i\to i_{1}\left(l\right)}=\lambda\mathbf{v}_{i_{1}\left(l\right)\to i_{2}\left(l\right)}=\ldots=\lambda^{r-1}\mathbf{v}_{i_{r-1}\left(l\right)\to l}.
\end{equation}
But $d_{l}=2$ as well, so
\begin{equation}
\lambda^{r}\mathbf{u}_{l\to k}=\lambda^{r-1}\mathbf{v}_{i_{r-1}\left(l\right)\to l}=\lambda^{r-1}\lambda\mathbf{v}_{l\to l_{1}\left(k\right)}=\ldots=\lambda^{r-1}\lambda^{r}\mathbf{v}_{l_{r-1}\left(k\right)\to k}.
\end{equation}
Finish by noting $\lambda\mathbf{v}_{k \to l_{r-1}(k)} + \mathbf{v}_{l_{r-1}(k) \to k} = \vintok{v}{k} = 0$
and applying \cref{eqn:def-u-a} again,
\begin{equation}
\lambda^{r}\mathbf{u}_{l\to k}=\lambda^{2r-1}\mathbf{v}_{l_{r-1}\left(k\right)\to k}=-\lambda^{2r-1}\lambda\mathbf{v}_{k\to l_{r-1}\left(k\right)}=-\lambda^{2r}\mathbf{u}_{k\to l},
\end{equation}
as desired.
If $d_i = 2$, we let $i'$ be the node of degree greater than $2$ closest to $l$ other than $k$.
Applying the previous argument inductively to each node between $l$ and $i'$ deals with this latter case.

\emph{Case 3:} Assume $k,l$ are neighbors with $d_k = d_l = 2$.
This case is handled in a similar way by induction on the number of nodes between $k$ and $l$ and the two nodes closest to them with degree greater than $2$.

%Now using eqs. \cref{eqn:def-u-a}, \eqref{eqn:def-u-b}, and \eqref{eqn:def-u-c}, we show $\mathbf{u}$ is an eigenfunction of $G$ with eigenvalue $\lambda^{r}$.
Now we proceed to show that $\mathbf{u}$ is an eigenfunction of $G$ with eigenvalue $\lambda^{r}$.
Let $G$ have NB matrix $\mathbf{B}$ and $k$ have degree $d_{k}=2$ with distinct neighbors $l$ and $j$.
We have, by \cref{eqn:def-u-b},
\begin{equation}
\left(\mathbf{B u}\right)_{k \to l} = \vintok{u}{k} - \mathbf{u}_{l \to k} = \mathbf{u}_{j \to k} = \lambda^{r}\mathbf{u}_{k \to l},
\end{equation}
as desired.
Now suppose $d_{k}>2$.
By \cref{eqn:def-u-c} we have
\begin{equation}
\left(\mathbf{B u}\right)_{k \to l} = \vintok{u}{k} - \mathbf{u}_{l \to k} = \vintok{u}{k} + \lambda^{r} \mathbf{u}_{k \to l}.
\end{equation}
Showing $\vintok{u}{k} = 0$ finishes the proof.
Indeed, we have
\begin{equation}
\vintok{u}{k} = \sum_{l} \mathbf{u}_{l \to k} = -\lambda^{r} \sum_{l} \mathbf{u}_{k \to l} = -\lambda^{r} \sum_{l} \mathbf{v}_{k \to k_{1}(l)} = \left( -\lambda^r \right) \vfromk{v}{k} = 0,
\end{equation}
where we have used \cref{eqn:def-u-c,eqn:def-u-a} and the fact that $\mathbf{v}$ is non-leaky in $H$.
\end{proof}

\begin{remark*}
Proposition \ref{pro:subdiv-functions} builds an eigenfunction of the original graph $G$ from an eigenfunction of the subdivision $H$.
The converse is also possible, by applying the same procedure in reverse.
Furthermore, this construction preserves linear (in)dependence.
In other words, the eigenfunctions of $G$ uniquely determine those of $H$ and vice versa.
\end{remark*}

We can now characterize some of the NB eigenvalues of graph subdivisions as follows.

\begin{theorem}\label{thm:subdiv}
Given a graph $G$ with at least two cycles, $n$ nodes, and $m$ edges, suppose $H$ is its $r$-subdivision.
Let $\lambda$ be a $r^{th}$ root of unity.
Then, $\lambda$ is an eigenvalue of $H$ and it satisfies $\am_H(\lambda) = \gm_H(\lambda) = m - n + 1$.
Further, let $\mu$ be a $r^{th}$ root of $-1$.
Then each $\mu$ is an eigenvalue of $H$ and it satisfies $\am_H(\mu) = \gm_H(\mu) = m - n + \mathbf{1}\{ G \text{ is bipartite}\}$.
\end{theorem}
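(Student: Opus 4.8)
The plan is to reduce the entire statement to the already-established real case (Propositions~\ref{pro:am-plus1}, \ref{pro:am-minus1}, \ref{pro:gm-plus1}, \ref{pro:gm-minus1}) by means of the subdivision relations \cref{eqn:zeta-subdiv} and Proposition~\ref{pro:subdiv-functions}. First I would handle algebraic multiplicity: since $\lambda$ is an $r^{th}$ root of unity, $\lambda^r = 1$; by \cref{eqn:zeta-subdiv} the multiplicity of $\lambda$ as a root of $\det(t\mathbf{I} - \mathbf{B}_H)$ equals the multiplicity of $1$ as a root of $\det(s\mathbf{I} - \mathbf{B}_G)$ where $s = t^r$. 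The map $t \mapsto t^r$ is a local analytic isomorphism near $t=\lambda$ (because $\lambda \neq 0$, so the derivative $r\lambda^{r-1}$ is nonzero), hence it does not change the order of vanishing; therefore $\am_H(\lambda) = \am_G(1) = m - n + 1$ by Proposition~\ref{pro:am-plus1} (applicable since $G$ has at least two cycles). The same argument with $s = t^r$ near a root $\mu$ of $-1$ gives $\am_H(\mu) = \am_G(-1) = m - n + \mathbf{1}\{G \text{ is bipartite}\}$ by Proposition~\ref{pro:am-minus1}.

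Next I would handle geometric multiplicity. Here the key tool is Proposition~\ref{pro:subdiv-functions} together with the remark following it: the subdivision map $\mathbf{v} \mapsto \mathbf{u}$ sends eigenfunctions of $\mathbf{B}_H$ with unitary eigenvalue $\lambda$ to eigenfunctions of $\mathbf{B}_G$ with eigenvalue $\lambda^r$, the inverse construction runs the other way, and both preserve linear (in)dependence. Consequently the two eigenspaces have the same dimension, so $\gm_H(\lambda) = \gm_G(\lambda^r) = \gm_G(1) = m - n + 1$ by Proposition~\ref{pro:gm-plus1}, and likewise $\gm_H(\mu) = \gm_G(\mu^r) = \gm_G(-1)$, which is $m - n + 1$ if $G$ is bipartite and $m - n$ otherwise by Proposition~\ref{pro:gm-minus1}. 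One subtlety to address: Proposition~\ref{pro:subdiv-functions} as stated assumes $\mathbf{v}$ is an eigenfunction of $H$ of \emph{unitary} eigenvalue; since $\lambda$ and $\mu$ are roots of unity they are unitary, and $\lambda^r = 1$, $\mu^r = -1$ are also unitary, so every eigenfunction involved is covered and the correspondence is a genuine linear isomorphism between the two eigenspaces — I would state this explicitly. This also shows existence: since $1$ (resp.\ $-1$) is an eigenvalue of $\mathbf{B}_G$ by the real-case propositions, $\lambda$ (resp.\ $\mu$) is an eigenvalue of $\mathbf{B}_H$.

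The main obstacle I anticipate is purely bookkeeping rather than conceptual: making the "$t \mapsto t^r$ preserves vanishing order" step airtight (noting $G$ having at least two cycles forces $\mathbf{B}_G$ to have $1$ and $-1$ as genuine eigenvalues, so the roots are not spurious, and that $t = 0$ is excluded so the substitution is a local biholomorphism), and checking that Proposition~\ref{pro:subdiv-functions} genuinely gives a bijection on eigenspaces — the cited remark asserts this, so I would lean on it, but I would make sure the inverse construction does not require any hypothesis on $G$ beyond what we already assume. A minor additional point worth a sentence: the Ihara formula and \cref{eqn:zeta-subdiv} are valid for non-simple graphs, so no simplicity assumption is needed, consistent with the rest of the paper. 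With these caveats noted, the proof is short: algebraic multiplicities follow from \cref{eqn:zeta-subdiv} plus the real-case propositions, and geometric multiplicities follow from Proposition~\ref{pro:subdiv-functions} plus the real-case propositions.
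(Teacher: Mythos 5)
Your proposal is correct and follows essentially the same route as the paper: algebraic multiplicities via \cref{eqn:zeta-subdiv} reduced to Propositions \ref{pro:am-plus1} and \ref{pro:am-minus1}, and geometric multiplicities via the eigenfunction correspondence of Proposition \ref{pro:subdiv-functions} (and its remark) reduced to Propositions \ref{pro:gm-plus1} and \ref{pro:gm-minus1}. Your explicit justification that $t \mapsto t^r$ preserves the order of vanishing at $\lambda \neq 0$ is a detail the paper leaves implicit, but it is the same argument.
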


\begin{proof}
Recall $1$ is always a NB eigenvalue of $G$ and \cref{eqn:zeta-subdiv} implies $\lambda$ is an eigenvalue of $H$.
Moreover, \cref{eqn:zeta-subdiv} also implies $\am_H(\lambda) = \am_G(1)$.
Using Proposition \ref{pro:subdiv-functions}, given an eigenfunction $\mathbf{u}$ of $G$ of eigenvalue $1$, we can construct a unique eigenfunction $\mathbf{v}$ of $H$ of eigenvalue $\lambda$, and vice versa.
In other words, we also have $\gm_H(\lambda) = \gm_G(1)$.
The fact $\am_H(\lambda) = \gm_H(\lambda) = m - n + 1$ then follows from the fact that $\am_G(1) = \gm_G(1) = m - n + 1$, which was established in Propositions \ref{pro:am-plus1} and \ref{pro:gm-plus1}.

Since $G$ contains at least two cycles, it admits $-1$ as an eigenvalue by Proposition \ref{pro:gm-minus1}.
By an argument similar to that of the previous paragraph, $\mu$ is an eigenvalue of $H$ that satisfies $\am_H(\mu) = \am_G(-1) = \gm_G(-1) = \gm_H(\mu)$.
By Propositions \ref{pro:am-minus1} and \ref{pro:gm-minus1}, the value of this expression is $m - n + 1$ is $G$ is bipartite and $m - n$ otherwise.
\end{proof}

\begin{remark*}
In general, graph subdivisions may have unitary eigenvalues not covered by the preceding Theorem.
Namely, if $H$ is the $r$-subdivision of $G$ and $G$ admits a nonreal unitary eigenvalue $\lambda$, then each $r^{th}$ root of $\lambda$ is an eigenvalue of $H$.
\end{remark*}

Up to this point we have studied the setting where a graph $G$ has at least two cycles and contains an eigenvalue $\lambda$ with $\lambda^r = 1$.
We now know that the support of each eigenfunction of $\lambda$ must be a subgraph $H$ that is the $r$-subdivision of some other graph.
Furthermore, each such subgraph $H$ is actually the support of many eigenfunctions for each $r^{th}$ root of unity, as shown in Theorem \ref{thm:subdiv}.
\Cref{sec:cycle-graphs} will study the same setting but in the case where $G$ has exactly one cycle (since we are assuming minimum degree at least $2$, this means $G$ is precisely a circle graph).
\Cref{sec:gluing} determines how these subgraphs $H$ may be connected to the rest of the graph $G$.

%%%%%%%%%%%%%%%%%%%%%%%%%%%%%%%%%%%%%%%%%%%%%%%%%%%%%%%%%%%%%%%%
\subsection{The case of circle graphs}\label{sec:cycle-graphs}

Let $G$ be a circle graph of length $r$.
As noted in \Cref{rmk:circle}, the NB eigenvalues of $G$ are precisely the $r^{th}$ roots of unity, each with multiplicity $2$.
This is seen by decomposing its NB matrix $\mathbf{B}$ in two permutation matrices.
The next example shows an alternative way of obtaining the same result.

\begin{example}\label{exm:circle-as-subdiv}
Let $O$ be the graph with one node and one self-loop.
The NB matrix of $O$ is the $2\times2$ identity matrix.
Note the circle graph $G$ of length $r$ is precisely the $r$-subdivision of $O$.
\Cref{eqn:zeta-subdiv} then shows that the eigenvalues of $G$ are exactly the $r$th roots of unity and each has multiplicity $2$.
\end{example}

In view of this observation, everything we have shown about graph subdivisions applies as well to circle graphs.
For example, the support of each NB eigenfunction is always the entirety of $G$, and $G$ can be considered as a pathological case of a NB chain: a closed chain whose endpoint has degree $2$.

What makes circle graphs special, and the reason why the previous sections do not consider circle graphs, is that circle graphs possess special NB eigenfunctions that other graphs do not.

\begin{lemma}
Let $G$ be a circle graph of length $r$ with a node $k$, and let $\lambda$ be one of the $r^{th}$ roots of unity.
Define the subspaces
\vspace{-0.75em}
\begin{align}
V &= \left\{ \mathbf{v} \colon \mathbf{B v} = \lambda \mathbf{v} \right\}, \nonumber \\
W(k) &= \left\{ \mathbf{v} \colon \mathbf{B v} = \lambda \mathbf{v} \text{ and } \vintok{v}{k} = 0 \right\}.
\end{align}
\vspace{-0.75em}
Then, we have $\dim V = 2$ and $\dim W(k) = 1$, for any $k$.
\end{lemma}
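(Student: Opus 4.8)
The plan is to reduce the second statement to the first, which is essentially \Cref{rmk:circle}, and then cut down $V$ by the single linear condition defining $W(k)$.

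\textbf{First claim ($\dim V = 2$).} As recorded in \Cref{rmk:circle} (see also \Cref{exm:circle-as-subdiv}), when $G$ is a circle graph on nodes $0,1,\dots,r-1$ the matrix $\mathbf{B}$ is the direct sum $\mathbf{B} = P_{+}\oplus P_{-}$, where $P_{+}$ is the $r\times r$ cyclic permutation acting on the ``forward'' oriented edges $j\to j+1$ and $P_{-}$ the cyclic permutation on the ``backward'' oriented edges $j+1\to j$ (indices mod $r$); this is immediate from \cref{eqn:operator}, since at a degree-$2$ node the term $\vintok{v}{k}-\mathbf{v}_{l\to k}$ just forwards the value on the non-backtracking incoming edge. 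A cyclic permutation matrix of size $r$ has, for each $r$th root of unity $\lambda$, a one-dimensional $\lambda$-eigenspace, spanned by the vector $(1,\lambda,\lambda^{2},\dots,\lambda^{r-1})$ (up to a cyclic relabelling), \emph{all of whose entries are nonzero}. Writing $\mathbf{v}^{+}$ (resp.\ $\mathbf{v}^{-}$) for the resulting eigenfunction of $\mathbf{B}$ — supported entirely on the forward (resp.\ backward) edges and nowhere zero there — the pair $\{\mathbf{v}^{+},\mathbf{v}^{-}\}$ is a basis of $V$, so $\dim V = 2$.

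\textbf{Second claim ($\dim W(k) = 1$).} Fix a node $k$ and consider the linear functional $\phi_{k}\colon V\to\mathbb{C}$ given by $\phi_{k}(\mathbf{v})\coloneqq\vintok{v}{k}$. By definition $W(k)=\ker(\phi_{k})$, so $\dim W(k)$ equals $2$ if $\phi_{k}$ vanishes identically on $V$ and equals $1$ otherwise; hence it suffices to exhibit one element of $V$ on which $\phi_{k}$ is nonzero. Node $k$ has exactly two incoming oriented edges, one forward and one backward. The eigenfunction $\mathbf{v}^{+}$ is zero on the backward one and nonzero on the forward one, so $\phi_{k}(\mathbf{v}^{+})=\vintok{(\mathbf{v}^{+})}{k}$ is exactly the (nonzero) value of $\mathbf{v}^{+}$ on the forward edge entering $k$. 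Thus $\phi_{k}\not\equiv 0$ on $V$, and therefore $\dim W(k)=1$. The whole argument is uniform in $k$, which gives the statement for every node.

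\textbf{Main obstacle.} There is no substantial difficulty here: the only points requiring care are the orientation bookkeeping in the splitting $\mathbf{B}=P_{+}\oplus P_{-}$ and the observation that the eigenvectors of a cyclic permutation have no zero entries (so that the unique forward edge into $k$ carries a nonzero value of $\mathbf{v}^{+}$), both of which are elementary and already implicit in \Cref{rmk:circle}.
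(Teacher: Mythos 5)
Your proposal is correct and takes essentially the same route as the paper: both rest on the splitting of $\mathbf{B}$ into two cyclic permutations on the forward and backward oriented edges, giving the explicit two-element basis of $V$ whose members are supported on one orientation each and vanish nowhere on their support. The only difference is cosmetic and lies in the final step — the paper identifies by direct calculation that multiples of $\mathbf{u}-\mathbf{v}$ are exactly the elements of $W(k)$, whereas you note that $W(k)$ is the kernel of the linear functional $\mathbf{v} \mapsto \vintok{v}{k}$ on $V$, show it is not identically zero by evaluating it on the forward-supported basis vector, and conclude $\dim W(k)=1$ by rank–nullity.
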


\begin{proof}
Note $V$ is the eigenspace corresponding to $\lambda$, and we have already discussed its dimension is $2$.
Now choose an arbitrary node $k$ and write $W \coloneqq W(k)$.
Label the nodes of $G$ consecutively as $k = i_1, \ldots, i_r$ and consider the following two functions.
\begin{align}
\mathbf{v}_{i_j \to i_{j+1}} = \lambda^{-j}, & \quad \mathbf{v}_{i_{j+1} \to i_j} = 0, \nonumber \\
\mathbf{u}_{i_j \to i_{j1+1}} = 0 \phantom{^{-j}}, & \quad \mathbf{u}_{i_{j+1} \to i_j} = \lambda^{-j},
\end{align}
where $j$ is taken modulo $r$. 
Elementary calculations show the following claims, which finish the proof.

\begin{itemize}
\item $\{\mathbf{u}, \mathbf{v}\}$ is a basis of $V$,
\item $\mathbf{u},\mathbf{v} \notin W$,
\item the only linear combinations of $\mathbf{u},\mathbf{v}$ that lie in $W$ are multiples of $\mathbf{u} - \mathbf{v}$. \qedhere
\end{itemize}
\end{proof}

In contrast, in graphs with two or more cycles, $W(k)$ is generally equal to $V$, for example when $d_k > 2$.
This will be of utmost importance in the following sections, as we will make extensive use of the set of nodes $k$ which satisfy $\vintok{v}{k} = 0$ for a fixed function $\mathbf{v}$.
Due to this observation, in what follows we will always separate our results based on whether $G$ is a circle graph or not.

%%%%%%%%%%%%%%%%%%%%%%%%%%%%%%%%%%%%%%%%%%%%%%%%%%%%%%%%%%%%%%%%
\section{Gluing non-leaky functions}\label{sec:gluing}
Consider a graph $G$ that is the $r$-subdivision of some other graph, and consider an arbitrary graph $H$.
How may we connect, or \emph{glue}, $G$ to $H$ so that the resulting graph is such that i) it contains $G$ as a subgraph, and ii) it admits $r^{th}$ roots of unity as NB eigenvalues?
This setting may be considered a converse to the setting studied in previous sections where we started by considering a graph that admits a root of unity as eigenvalue and concluded that it must contain a subgraph that is an $r$-subdivision.

The following construction formalizes the notion of \emph{gluing} alluded to in the previous paragraph.

\begin{definition}[\bf{Non-leaky gluing}]\label{def:non-leaky-gluing}
Let $G,H$ be two graphs with node sets $V(G),V(H)$ and edge sets $E(G),E(H)$, respectively, and suppose $\mathbf{v}$ is a NB eigenfunction of $G$.
Now define the set
\begin{equation}
N \coloneqq N(G,\mathbf{v}) = \left\{k\in V(G):\,\vintok{v}{k} = 0\right\}.
\end{equation}
By definition, $\mathbf{v}$ does not leak via any of the nodes in $N$.
Now construct a graph $X$ by identifying (or \emph{gluing}) the
nodes in $N$ with nodes in $H$.
More formally, let $\sim$ be an equivalence relation in $V(G) \coprod V(H)$, where $\coprod$ denotes disjoint union.
In this construction we only consider equivalence relations $\sim$ such that each equivalence class is either trivial or it contains exactly one node from $N$ and one node from $V(H)$.
Define $X$ as having node set $V(X)=\left(V(G)\coprod V(H)\right)/\!\sim$~ and edge set $E(X)=E(H)\coprod E(G)$.
The equivalence relation $\sim$ is called a \emph{non-leaky gluing equivalence relation}, or simply a \emph{non-leaky gluing}.
See \Cref{fig:detach_glue} for examples. 
\end{definition}

\begin{figure}
\centering
\includegraphics[width=\textwidth]{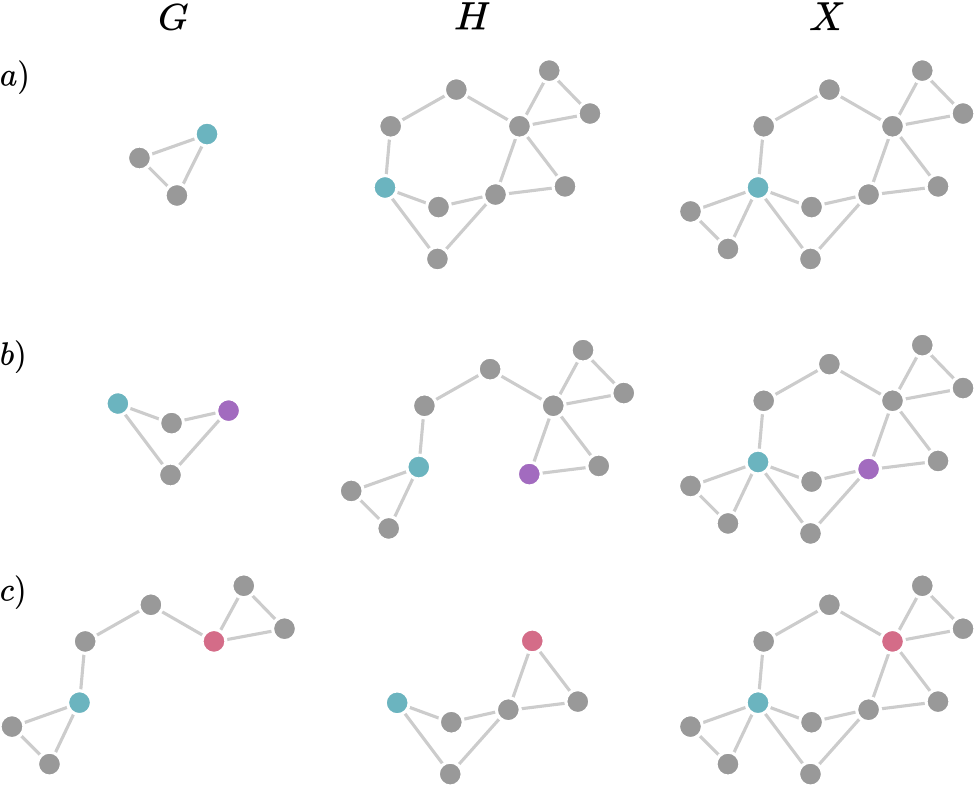}
\caption{\label{fig:detach_glue}
\textbf{Non-leaky gluing and detaching.}
For each row, gluing $G$ to $H$ by identifying the nodes by color, except for the gray nodes, produces the graph $X$.
The colored nodes in the left column comprise (a subset of) the set $N \subset V(G)$.
The three rows produce the same graph $X$ via different gluing operations.
Conversely, when analyzing $X$, one can detach a subgraph $G$ to study the (non-leaky) eigenfunctions supported on it.
\textbf{a)} Gluing an odd-length circle.
The gluing equivalence relation in this case has one equivalence class comprised of the two green nodes, one in $G$ and one in $H$.
\textbf{b)} Gluing an even-length circle.
The gluing has two non-trivial equivalence classes: one containing the two purple nodes and another containing the two green nodes.
\textbf{c)} Gluing a graph with two cycles.
The gluing has two non-trivial equivalence classes: one containing the two pink nodes and another containing the two green nodes.
\textbf{a-c)} In each row, the gluing is such that each gray node belongs to a trivial equivalence class, i.e. a class that contains a single element.
}
\end{figure}

\begin{remark*}
In what follows, it is a technical necessity that at least one of the graphs $G$, $H$ in Definition \ref{def:non-leaky-gluing} has minimum degree at least $2$ and that the non-leaky gluing $\sim$ is chosen in such a way that the resulting graph $X$ also has minimum degree at least $2$.
However, we will not need this level of generality and instead will require, for the sake of simplicity, that both $G$ and $H$ have minimum degree at least $2$, which guarantees that $X$ has minimum degree at least $2$, regardless of the gluing $\sim$. 
\end{remark*}

\begin{theorem}\label{thm:non-leaky-gluing}
Let $G,H$ be two graphs with minimum degree at least $2$ and suppose $\mathbf{v}$ is a NB eigenfunction of $G$ with unitary eigenvalue $\lambda$.
Consider the set $N(G, \mathbf{v})$ as in Definition \ref{def:non-leaky-gluing}, and let $\sim$ be a non-leaky gluing.
Construct $X$ by gluing $G$ to $H$ using $\sim$.
Then, $\lambda$ is a NB eigenvalue of $X$ with an eigenfunction $\mathbf{u}$ that is supported on (a copy of) $G$, i.e. $\mathbf{u}$ is equal to $\mathbf{v}$ in $E(G)\subset E(X)$ and zero in $E(H)\subset E(X)$.
We say $\lambda$ and $\mathbf{v}$ are \emph{transferred} to $X$ by gluing $G$.
\end{theorem}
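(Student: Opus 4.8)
The plan is to verify directly that the function $\mathbf{u}$ on $\bar{E}(X)$ defined by $\mathbf{u}_{k \to l} = \mathbf{v}_{k \to l}$ for oriented edges in $E(G)$ and $\mathbf{u}_{k \to l} = 0$ for oriented edges in $E(H)$ satisfies $\mathbf{B}_X \mathbf{u} = \lambda \mathbf{u}$, and then that it is not identically zero (which is immediate since $\mathbf{v}$ is a nonzero eigenfunction of $G$ and the gluing only identifies nodes, never deletes edges, so the copy of $G$ sits inside $X$ with its edge set intact). The eigenvalue equation is checked edge-by-edge using the operator form \cref{eqn:operator}, namely $(\mathbf{B}_X \mathbf{u})_{k \to l} = \vintok{u}{k} - \mathbf{u}_{l \to k}$, where here $\vintok{u}{k}$ is computed with respect to the adjacency structure of $X$.

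First I would stratify the oriented edges $k \to l$ of $X$ into cases according to where $k$ and $l$ live. Case A: $k \to l \in E(H)$ and moreover $k$ is not one of the glued nodes (equivalently, $k \notin N$ as a vertex of $X$). Then every edge incident to $k$ in $X$ lies in $E(H)$, so $\mathbf{u}$ vanishes on all of them; both sides of the eigenvalue equation are $0$. Case B: $k \to l \in E(H)$ but $k$ is a glued node identified with some $k' \in N \subset V(G)$. Then the edges incident to $k$ in $X$ are the $H$-edges at $k$ (on which $\mathbf{u}$ is $0$) together with the $G$-edges at $k'$. Hence $\vintok{u}{k} = \sum_{i \in V(G)} a^G_{i k'} \mathbf{u}_{i \to k'} = \vintok{v}{k'}$, and since $k' \in N$ this equals $0$ by the very definition of $N(G,\mathbf{v})$; also $\mathbf{u}_{l \to k} = 0$ because $l \to k \in E(H)$ and $\mathbf{u} = \lambda \mathbf{u} = 0$ on that edge. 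So again both sides vanish, consistently with $\mathbf{u}_{k \to l} = 0$. Case C: $k \to l \in E(G)$. The edges incident to $k$ in $X$ that carry a nonzero value of $\mathbf{u}$ are exactly the $G$-edges at $k$; any additional $H$-edges attached at $k$ (present only if $k$ is a glued node) contribute zero. Therefore $\vintok{u}{k} = \sum_{i \in V(G)} a^G_{ik} \mathbf{v}_{i \to k} = \vintok{v}{k}$ and $\mathbf{u}_{l \to k} = \mathbf{v}_{l \to k}$, so $(\mathbf{B}_X \mathbf{u})_{k \to l} = \vintok{v}{k} - \mathbf{v}_{l \to k} = (\mathbf{B}_G \mathbf{v})_{k \to l} = \lambda \mathbf{v}_{k \to l} = \lambda \mathbf{u}_{k \to l}$, using that $\mathbf{v}$ is an eigenfunction of $G$ via \cref{eqn:operator}. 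Across all three cases $\mathbf{B}_X \mathbf{u} = \lambda \mathbf{u}$.

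The one subtlety — and the step I expect to need the most care — is bookkeeping about adjacency after gluing: I must make sure that identifying a node $k' \in N$ with a node of $H$ does not create spurious length-two walks that the NB matrix of $X$ would count. But this is exactly why the hypothesis $k' \in N$, i.e. $\vintok{v}{k'}=0$, is the right condition: the only new walks through a glued node go from an $H$-edge into $k'$ and then out along a $G$-edge (or vice versa), and in Case B above the full incoming sum $\vintok{u}{k}$ telescopes to $\vintok{v}{k'} = 0$, so these crossings contribute nothing; there is no backtracking issue because the backtrack term $\mathbf{u}_{l \to k}$ is handled separately and also vanishes. One should also note the degree of a glued node in $X$ is $d^G_{k'} + d^H_{k}$, which is $\geq 2$, so $X$ has minimum degree at least $2$ as promised by the remark preceding the theorem, and \cref{eqn:operator} applies verbatim. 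Finally, $\mathbf{u} \neq 0$ because $\mathbf{v} \neq 0$, so $\lambda$ is genuinely a NB eigenvalue of $X$, completing the proof.
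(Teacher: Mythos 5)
Your proof is correct and takes essentially the same route as the paper: both arguments reduce to a case analysis over nodes/edges of $X$ according to whether they lie only in $G$, only in $H$, or at a glued node, with the condition $\vintok{v}{k}=0$ for $k\in N(G,\mathbf{v})$ doing all the work at the identified nodes. The only difference is presentational: the paper shows $\mathbf{u}$ is non-leaky in $X$ and then appeals to the detachment principle of \cref{sec:detachment}, whereas you verify $\mathbf{B}_X\mathbf{u}=\lambda\mathbf{u}$ edge-by-edge via \cref{eqn:operator}, which makes explicit the step the paper leaves implicit.
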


\begin{proof}
Suppose we prove that $\mathbf{u}$ is non-leaky in $X$.
Then, since it is a non-leaky eigenfunction whose support is isomorphic to $G$ with eigenvalue $\lambda$, it is also an eigenfunction of $X$ with eigenvalue $\lambda$.
Thus it suffices to show that $\mathbf{u}$ is non-leaky in $X$.

Let $M \subset N(G, \mathbf{v})$ be the set of nodes that are glued by $\sim$, that is, the set of nodes that belong to a non-trivial equivalence class.
Note the node set of $X$ can be partitioned in three disjoint sets:
\begin{equation}\label{eqn:glue-node-partition}
V(X) \,\, = \,\, \left( V(G) \setminus M \right) \,\, \coprod \,\, \left(V(H) \setminus M \right) \,\, \coprod \,\, M.
\end{equation}
We proceed by proving that each node $k \in V(X)$ satisfies $\left(d_{k}-2\right)\vintok{u}{k} = 0$, depending on which set of the partition it belongs to.
These cases are illustrated in \cref{fig:gluing_proof_cases}.

\emph{Case 1:} Let $k \in V(G) \setminus M$.
By construction, every edge $l \to k$ incident to $k$ is an edge of $G$.
Since $\mathbf{u}$ equals $\mathbf{v}$ on all of $E(G)$, we have $\mbox{\ensuremath{\left(d_{k}-2\right)\vintok{u}{k}=\left(d_{k}-2\right)\vintok{v}{k}=0}}$
because $\mathbf{v}$ is non-leaky in $G$.

\emph{Case 2:} Let $k \in V(H) \setminus M$.
The only edges $l \to k$ incident to $k$ are edges of $H$.
Since $\mathbf{u}$ is zero in $E(H)$, we have ${\ensuremath{\vintok{u}{k}}} = 0$.

\emph{Case 3:} Let $k \in M$.
We can partition the edges incident to $k$ as those that belong to $E(G)$ and those that belong to $E(H)$.
Then we have
\begin{equation}
\vintok{u}{k} = \sum_{i \to k \in E(G)} \mathbf{u}_{i \to k} + \sum_{j \to k \in E(H)} \mathbf{u}_{j \to k} = \vintok{v}{k} + 0 = 0,
\end{equation}
where the second equation uses again the fact that $\mathbf{u}$ equals $\mathbf{v}$ in $E(G)$ and equals zero in $E(H)$.
\end{proof}

\begin{figure}[t]
\centering
\includegraphics[width=\textwidth]{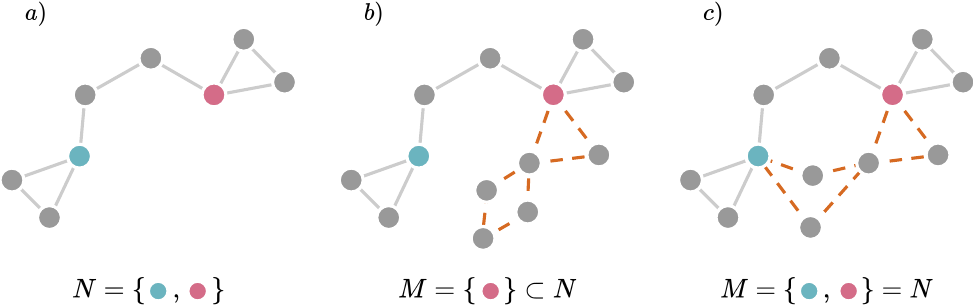}
\caption{\label{fig:gluing_proof_cases}
\textbf{Edges and nodes after gluing.}
\textbf{a)} A graph $G$ with its corresponding set $N=N(G, \mathbf{v})$ where $\mathbf{v}$ is such that $\mathbf{B v} = \lambda \mathbf{v}$ and $\lambda$ is a nonreal third root of unity.
\textbf{b)} The graph $H$ (with dashed edges) is glued to $G$ (with solid edges) by identifying a single node.
\textbf{c)} The same graph $H$ is glued to $G$ by identifying two nodes.
The proof of Theorem \ref{thm:non-leaky-gluing} considers nodes in three cases: those nodes that are only incident to solid edges, those nodes that are only incident to dashed edges, and those nodes that are incident to both.
Nodes in the third case are precisely those nodes that have been glued.}
\end{figure}

\begin{remark*}
The preceding proof holds regardless of what $\sim$ is.
In other words, the eigenfunction is transferred regardless of how the graph is glued, as long as $\sim$ is a valid non-leaky gluing.
Note also that $M$ may be a proper subset of $N(G,\mathbf{v})$.
Indeed, one need not glue \emph{every} possible node.
\end{remark*}

If $G$ in Theorem \ref{thm:non-leaky-gluing} is a subdivision, then we have shown that non-leaky gluing is the answer to the question at the start of this section.
That is, non-leaky gluing describes exactly all possible ways of connecting a graph subdivision ($G$) to another graph ($H$) so that the resulting graph ($X$) admits the prescribed unitary eigenvalues.
Indeed, trying to glue $G$ in any other way would, by definition, cause the transferred functions to leak and thus no longer be eigenfunctions of $X$.

%%%%%%%%%%%%%%%%%%%%%%%%%%%%%%%%%%%%%%%%%%%%%%%%%%%%%%%%%%%%%%%%
\subsection{Characterizing the set \texorpdfstring{$N(G, \mathbf{v})$}{N(G,v)}}\label{sec:characterizing}
This section is dedicated to characterizing the set $N(G, \mathbf{v})$ when $G$ is a subdivision.
First, note that when $\lambda$ is a real root of unity, \cref{eqn:system-1,eqn:system-minus-1} imply that every node is in $N(G,\mathbf{v})$, for any eigenfunction $\mathbf{v}$.
For the nonreal case, we first prove a technical Lemma, which is essentially a continuation of the Key Lemma \ref{lem:path-degree-2}.

\begin{lemma}\label{lem:nodes-in-n}
Let $G$ be a graph with NB eigenfunction $\mathbf{v}$ of nonreal unitary eigenvalue $\lambda$. 
Suppose $k,l$ are nodes in $G$ with degrees greater than $2$ and such that $k = i_0, i_1, \ldots, i_p = l$ is a NB chain of length $p$.
Additionally, suppose $\lambda$ has order $q$, that is, $\lambda^q = 1$ and $\lambda^{q'} \neq 1$ for any $0 < q' < q$.
If $\mathbf{v}$ is nonzero on at least one of the edges of the chain, then 
\begin{equation}\label{eqn:nodes-in-n}
\left\{i_j \colon 0 \leq j \leq p \text{ and } 2j \text{ is a multiple of } q \right\} \subset N(G,\mathbf{v}),
\end{equation}
and no other node in the chain is in $N(G,\mathbf{v})$.
\end{lemma}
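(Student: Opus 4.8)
The plan is to analyze the NB chain using the relations derived in the Key Lemma's proof, extended to track the value of $\vintok{v}{i_j}$ at each node along the chain. Recall that the Key Lemma already gives us, on each interior node $i_j$ of the chain (where $d_{i_j}=2$), the recursions $\lambda \mathbf{v}_{i_j \to i_{j+1}} = \mathbf{v}_{i_{j-1} \to i_j}$ and $\lambda \mathbf{v}_{i_j \to i_{j-1}} = \mathbf{v}_{i_{j+1} \to i_j}$, so the forward values propagate as $\mathbf{v}_{i_j \to i_{j+1}} = \lambda^{-j} \mathbf{v}_{k \to i_1}$ up to the initial term, and similarly the backward values propagate with the opposite power. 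Since $d_k, d_l > 2$, the eigenfunction is non-leaky there, so $\vintok{v}{k} = \vintok{v}{l} = 0$, which by \cref{eqn:evector} forces $\lambda \mathbf{v}_{k \to i_1} = -\mathbf{v}_{i_1 \to k}$ at the $k$-end and the analogous relation at the $l$-end. First I would write $\mathbf{v}_{i_1 \to k}$ in terms of $\mathbf{v}_{i_{p-1}\to l}$ by walking backward along the chain, and independently write $\mathbf{v}_{k \to i_1}$ in terms of $\mathbf{v}_{l \to i_{p-1}}$ by walking forward; combined with the two boundary relations this reproduces $\lambda^{2p}=1$, but more importantly it pins down the relative phases of all forward and all backward components along the chain.

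Next I would compute $\vintok{v}{i_j}$ for each interior node $i_j$. Since $d_{i_j} = 2$, node $i_j$ has exactly the two neighbors $i_{j-1}$ and $i_{j+1}$, so $\vintok{v}{i_j} = \mathbf{v}_{i_{j-1} \to i_j} + \mathbf{v}_{i_{j+1} \to i_j}$. Using the propagation formulas, the first term is a constant times $\lambda^{-(j-1)}$ (from the forward family) and the second term is a constant times $\lambda^{\,j+1}$ up to the sign fixed by the boundary conditions at $l$; after substituting the boundary relation $\lambda \mathbf{v}_{l \to i_{p-1}} + \mathbf{v}_{i_{p-1}\to l} = 0$ and using $\lambda^{2p}=1$, the two terms combine into an expression of the form $C\lambda^{-j}\bigl(1 - \lambda^{2j}\bigr)$ (or $C\lambda^{-j}(1+\lambda^{2j})$ depending on how the signs shake out, which I would fix by a careful bookkeeping of the two boundary relations). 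The point is that $\vintok{v}{i_j} = 0$ if and only if $\lambda^{2j} = 1$, i.e. if and only if $2j$ is a multiple of $q$ — this is exactly the membership condition in \eqref{eqn:nodes-in-n}. For the endpoints, $k$ and $l$ are always in $N(G,\mathbf{v})$ by non-leakiness, consistent with $j=0$ and $j=p$ (note $2p$ is a multiple of $q$ since $\lambda^{2p}=1$).

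Then I would argue the ``no other node in the chain'' part: the nodes of the chain are exactly $i_0, \ldots, i_p$, and for $0 < j < p$ we have just shown $i_j \in N(G,\mathbf{v})$ iff $q \mid 2j$; for $j = 0, p$ the condition $q \mid 2j$ holds automatically. Hence the set on the left of \eqref{eqn:nodes-in-n} is precisely the set of chain-nodes lying in $N(G,\mathbf{v})$, which is the claim. The main obstacle I anticipate is getting the signs and phase offsets exactly right when translating the two boundary conditions (at $k$ and at $l$) into a single clean formula for $\vintok{v}{i_j}$: one must be careful that the forward family $\{\mathbf{v}_{i_j \to i_{j+1}}\}$ and the backward family $\{\mathbf{v}_{i_j \to i_{j-1}}\}$ are pinned down consistently from the \emph{same} nonzero seed (the hypothesis guarantees at least one forward edge is nonzero, and the recursions then force all of them nonzero, and the backward relation at $k$ forces the backward family nonzero too). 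Once the bookkeeping is done, the factor $(1 \pm \lambda^{2j})$ drops out and the rest is immediate. A minor point worth checking separately is the closed-chain case $k = l$, where the same computation goes through with the endpoint playing both roles, but since the statement already assumes $d_k, d_l > 2$ this is handled uniformly.
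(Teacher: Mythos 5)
Your proposal is correct and follows essentially the same route as the paper: propagate the forward and backward edge values along the chain via the degree-$2$ recursions from the Key Lemma, use $\vintok{v}{k}=\vintok{v}{l}=0$ at the endpoints together with $\lambda^{2p}=1$, and conclude that an interior node $i_j$ lies in $N(G,\mathbf{v})$ exactly when $\lambda^{2j}=1$, i.e. when $q \mid 2j$. The sign you left open resolves to the minus case, $\vintok{v}{i_j}=\mathbf{v}_{i_{j-1}\to i_j}+\mathbf{v}_{i_{j+1}\to i_j}=\lambda^{-(j-1)}-\lambda^{j+1}=\lambda^{1-j}\left(1-\lambda^{2j}\right)$ (taking the seed $\mathbf{v}_{k\to i_1}=1$), so the criterion you asserted is indeed what the bookkeeping yields, matching the paper's computation.
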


\begin{proof}
We may assume without loss of generality that $\mathbf{v}_{k \to i_1} = 1$.
Observe that eqs. \eqref{eqn:key-2} from the Key Lemma \ref{lem:path-degree-2} also apply in the present situation.
We may use these equations to find
\begin{equation}
\mathbf{v}_{i_{j}\to i_{j+1}} = \lambda^{-j}, \text{ for each }j=0,\ldots,p-1.
\end{equation}
This, plus the fact that $\vintok{v}{l} = 0$, yields $\mathbf{v}_{i_p \to i_{p-1}} = -\lambda^{-p}$ and therefore
\begin{equation}
\mathbf{v}_{i_{j} \to i_{j-1}} = - \lambda^{-2p + j} = - \lambda^{j}, \text{ for each }j=1,\ldots,p,
\end{equation}
where the last equation uses the fact $\lambda^{2p} = 1$, implied by the Key Lemma \ref{lem:path-degree-2}.

Now let $j \in \{1,\ldots,p-1\}$.
Since $i_j$ has degree exactly $2$, we have
\begin{equation}
\vintok{v}{i_j} = \mathbf{v}_{i_{j-1} \to i_{j}} + \mathbf{v}_{i_{j+1} \to i_{j}} = \lambda^{-(j-1)} - \lambda^{j+1}.
\end{equation}
Therefore,
\begin{equation}\label{eqn:middle-node}
\vintok{v}{i_j} = 0 \iff \lambda^{2j} = 1.
\end{equation}
Since $\lambda$ is nonreal, \cref{eqn:middle-node} is non-trivial.
Note $j$ is a solution if and only if $2j$ is a multiple of $q$.
Thus
\begin{equation}\label{eqn:nodes-in-n-1}
\left\{i_j \colon 0 < j < p \text{ and } 2j \text{ is a multiple of } q \right\} \subset N(G,\mathbf{v}).
\end{equation}

Additionally, since the degrees of $k,l$ are larger than $2$ and $\mathbf{v}$ is non-leaky, we immediately have
\begin{equation}\label{eqn:nodes-in-n-2}
\vintok{v}{k} = \vintok{v}{l} = 0.
\end{equation}

Finally, the Key Lemma implies that $2p$ is a multiple of $q$.
This fact plus \cref{eqn:nodes-in-n-1,eqn:nodes-in-n-2} are equivalent to \cref{eqn:nodes-in-n}.
No other node is in the chain due to \cref{eqn:middle-node}, and the proof is finished.
\end{proof}

\begin{corollary}\label{lem:nodes-in-n-subdiv}
Under the assumptions of Lemma \ref{lem:nodes-in-n}, further suppose $\lambda$ has order $p$ or $2p$.
Then the following claims are true.
\begin{enumerate}
\item The endpoints of the chain are in $N$, i.e. $\{k,l\} \subset N(G, \mathbf{v})$.
\item If $\lambda$ has order $p$ and $p$ is even, then $i_{p/2} \in N(G,\mathbf{v})$.
\item No other node in the chain is in $N(G,\mathbf{v})$.
\end{enumerate}
\end{corollary}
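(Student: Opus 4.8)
The plan is to derive everything directly from Lemma \ref{lem:nodes-in-n}, which already computes $N(G,\mathbf{v})$ restricted to a single NB chain: the nodes $i_j$ with $2j$ a multiple of $q$, and no others. So the entire task is to specialize the divisibility condition ``$2j$ is a multiple of $q$'' for $0 \le j \le p$ to the two cases $q = p$ and $q = 2p$, using that $\lambda$ is nonreal (hence $q > 2$, in particular $q \neq 1, 2$).

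First I would handle claim~1. Since $2p$ is always a multiple of $q$ whenever $q \in \{p, 2p\}$, the value $j = p$ satisfies the condition in \cref{eqn:nodes-in-n}, so $l = i_p \in N(G,\mathbf{v})$; and $j = 0$ trivially satisfies it, giving $k = i_0 \in N(G,\mathbf{v})$. (Alternatively one can cite \cref{eqn:nodes-in-n-2} directly, since $d_k, d_l > 2$ and $\mathbf{v}$ is non-leaky.) Next, claim~2: if $q = p$ and $p$ is even, take $j = p/2$; then $2j = p = q$ is a multiple of $q$ and $0 < j < p$, so $i_{p/2} \in N(G,\mathbf{v})$ by \cref{eqn:nodes-in-n}. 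For claim~3 I must show these are the only nodes of the chain in $N$. By the last sentence of Lemma \ref{lem:nodes-in-n}, it suffices to enumerate all $j$ with $0 \le j \le p$ and $q \mid 2j$. When $q = 2p$: $q \mid 2j$ means $p \mid j$, forcing $j \in \{0, p\}$ in the range — so only the endpoints. When $q = p$: $q \mid 2j$ means $p \mid 2j$; writing $d = \gcd(p,2)$, this is $(p/d) \mid j$. If $p$ is odd, $d=1$ and $j \in \{0,p\}$; if $p$ is even, $d = 2$ and $j$ is a multiple of $p/2$, so $j \in \{0, p/2, p\}$. In every case the solution set is exactly $\{k, l\}$ together with $i_{p/2}$ in the single sub-case of claim~2, which is precisely what claims 1--3 assert.

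I do not anticipate a serious obstacle here — this is a short elementary-number-theory unpacking of Lemma \ref{lem:nodes-in-n}. The only point requiring a little care is making sure the hypotheses of Lemma \ref{lem:nodes-in-n} are genuinely in force, in particular that $\lambda$ is nonreal so that $q \ge 3$ and the condition $q \mid 2j$ is not automatically satisfied (if $q$ were $1$ or $2$ every $j$ would qualify and claim~3 would fail); but this is guaranteed since the Corollary inherits the assumptions of Lemma \ref{lem:nodes-in-n}, which posits a nonreal unitary eigenvalue. I would write the proof as three short paragraphs, one per claim, with the divisibility case-split for claim~3 displayed as a brief computation rather than a long argument.
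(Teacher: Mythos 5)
Your proposal is correct and follows the same route as the paper, whose proof is simply ``Direct from Lemma \ref{lem:nodes-in-n} when $q \in \{p, 2p\}$''; you have merely spelled out the elementary divisibility case-split that the paper leaves implicit. No gaps.
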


\begin{proof}
Direct from Lemma \ref{lem:nodes-in-n} when $q \in \{p, 2p\}.$
\end{proof}

Having determined the nodes in a single NB chain that are members of $N(G, \mathbf{v})$, fully characterizing this latter set is merely a corollary.
For convenience, we introduce the following terminology.

\begin{definition}
Let $H$ be a subdivision of $G$.
There is a natural inclusion from the node set of $G$ to the node set of $H$, up to relabelling of the nodes.
A node in the image of this inclusion is called a \emph{true node}.
A node of $H$ that is not a true node is called a \emph{subdivision node}.
Now suppose $H$ is the $r$-subdivision of $G$.
If $r$ is even, each NB chain in $H$ has a subdivision node that is exactly in the middle of it; we call this node the \emph{middle node}.
If $r$ is odd, there are no middle nodes.
See Figure \ref{fig:true_nodes}.
\end{definition}

\begin{figure}[t]
\centering
\includegraphics[width=\textwidth]{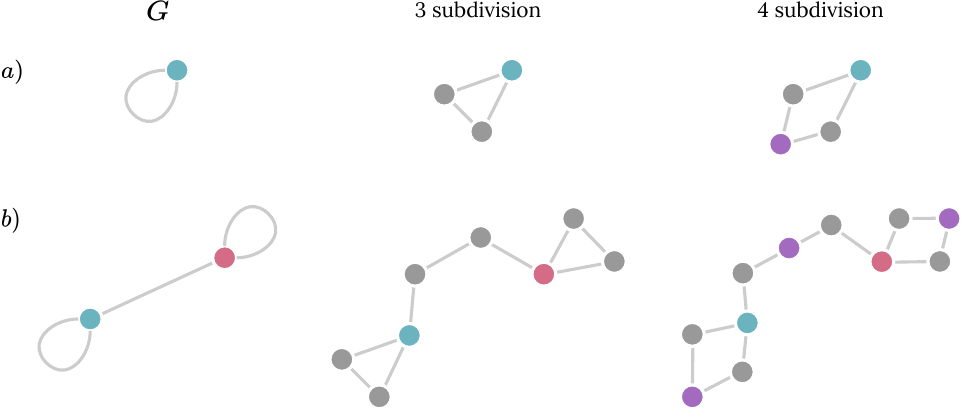}
\caption{\label{fig:true_nodes}
\textbf{True and middle nodes in graph subdivisions.}
A graph $G$ (left) and its $3$-subdivision (middle) and $4$-subdivision (right).
\textbf{a)} When $G$ is the graph $O$ with one node and one self-loop, its subdivisions are the circle graphs.
\textbf{b)} A graph with two cycles, cf. \cref{fig:detach_glue}(c). True nodes are color-coded.
Middle nodes are purple.
Subdivision nodes that are not middle nodes are  gray.
%Odd subdivisions have no middle nodes.
}
\end{figure}

\begin{proposition}[\bf{Gluing primitive roots in a subdivision}]\label{pro:glue-subdiv-primitive}
Let $G$ be a graph with minimum degree at least $2$ and with at least two cycles and suppose $G$ is the $r$-subdivision of some graph $H$.
Let $\mathbf{v}$ be an eigenfunction of $G$ corresponding to eigenvalue $\lambda$ such that $\lambda$ has order exactly $r$.
Then, 
\begin{equation}
N(G,\mathbf{v})=\{u\in V(G):\text{\ensuremath{u} is a true node or a middle node}\}.
\end{equation}
\end{proposition}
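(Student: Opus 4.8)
The plan is to reduce this statement to a direct application of Corollary~\ref{lem:nodes-in-n-subdiv}, applied NB-chain by NB-chain. Since $G$ is the $r$-subdivision of $H$, every NB chain of $G$ has length a multiple of $r$, and its endpoints are true nodes (images of nodes of degree $>2$ in $H$). By Theorem~\ref{thm:support}, the support of $\mathbf{v}$ is itself a subdivision, and in fact — since $\lambda$ has order exactly $r$ and $p^* = r$ by the hypothesis — every NB chain in the support has length a multiple of $r$; combined with the Key Lemma, a chain of $G$ of length $p$ on which $\mathbf{v}$ is nonzero must satisfy $\lambda^{2p} = 1$, so $2p$ is a multiple of $r$, i.e. $p$ is a multiple of $r/\gcd(2,r)$. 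I will want to argue first that, chain by chain, the relevant value of $q$ (the order of $\lambda$, which is $r$) stands in relation $q \in \{p, 2p\}$ to the chain length $p$ whenever $\mathbf{v}$ is nonzero on the chain; this is where I expect to need a small argument, possibly invoking that $G = r$-subdivision of $H$ forces $p \equiv 0 \pmod r$ and $\lambda^{2p}=1$ forces $p \mid r$ or $2p \mid$ something — in any case $p \in \{r/2, r\}$ is not automatic and I will have to handle $p$ a larger multiple of $r$ separately using the full strength of Lemma~\ref{lem:nodes-in-n}.

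The key steps, in order: (1) Fix an arbitrary NB chain $C = (k = i_0, i_1, \ldots, i_p = l)$ of $G$; since $G$ is the $r$-subdivision of $H$, $p$ is a multiple of $r$ and $k, l$ are true nodes, while the intermediate $i_j$ are subdivision nodes, exactly one of which (if $r$, hence $p$, is even) is the middle node of $C$ when $p = r$; for $p$ a larger multiple of $r$ the middle nodes of the $r$-length subchains sitting inside $C$ are the relevant candidates. (2) Split into two cases according to whether $\mathbf{v}$ vanishes identically on $C$ or not. If $\mathbf{v} \equiv 0$ on $C$, then every edge into any $i_j$ with $0 < j < p$ carries value $0$, so $\vintok{v}{i_j} = 0$ trivially, putting all subdivision nodes of $C$ into $N(G,\mathbf{v})$; but the endpoints $k,l$ are true nodes with degree $>2$, so they are in $N$ by non-leakiness regardless. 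Hence on a zero-chain every node of $C$ is in $N$ — this is consistent with the claim since true and middle nodes are a subset of all nodes. (3) If $\mathbf{v}$ is nonzero on $C$, apply Lemma~\ref{lem:nodes-in-n} with $q = r$: the nodes of $C$ in $N$ are exactly the $i_j$ with $2j$ a multiple of $r$. Since $p$ is a multiple of $r$, $j = 0$ and $j = p$ qualify (the true endpoints). If $r$ is even, $j$ is any multiple of $r/2$, so in particular $j = r/2, 3r/2, \ldots$; among these the ones that are subdivision nodes are precisely the middle nodes of the length-$r$ subchains of $C$. If $r$ is odd, $2j \equiv 0 \pmod r$ forces $j \equiv 0 \pmod r$, so only true nodes qualify. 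In all cases the nodes of $C$ in $N$ are exactly (true nodes on $C$) $\cup$ (middle nodes on $C$).

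(4) Finally, take the union over all NB chains of $G$: every edge of $G$ lies in a unique NB chain (since $G$ is not a circle graph — it has at least two cycles), and $V(G)$ is covered by the chains, with true nodes lying on multiple chains and subdivision nodes on a unique one. A true node is in $N$ by non-leakiness (its degree is $>2$). A middle node $i_{r/2}$ of some chain is in $N$ iff $\mathbf{v}$ is nonzero on that chain OR the chain is a zero-chain — but wait: on a zero-chain the middle node is in $N$ too, and on a nonzero-chain it is in $N$ by step (3) when $r$ is even. Hence every middle node is in $N$. Conversely, a subdivision node that is not a middle node lies on a unique chain; if that chain is nonzero it is excluded from $N$ by step (3)'s "no other node" clause, and if the chain is zero then— here is the one genuine subtlety — such a node \emph{would} be in $N$, contradicting the claimed equality. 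So the argument as stated only works if $\mathbf{v}$ is nonzero on \emph{every} NB chain of $G$. \textbf{The main obstacle} is therefore to rule out zero-chains: I will argue that since $\lambda$ has order exactly $r$ and $G$ is the $r$-subdivision of the \emph{connected} graph $H$, the eigenfunction $\mathbf{v}$ must have support equal to all of $G$ — intuitively, by Proposition~\ref{pro:subdiv-functions} the subdivision $\mathbf{u}$ of $\mathbf{v}$ is an eigenfunction of $H$ for $\lambda^r = 1$, i.e. lives in the $+1$-eigenspace described in Proposition~\ref{pro:gm-plus1} whose basis vectors are supported on cycles; and crucially $\mathbf{v}$ nonzero on one edge of a chain forces it nonzero on all of that chain (Key Lemma), and connectedness plus the structure of $N$ forces the support to propagate. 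I will need to make this propagation precise — most cleanly by observing that if some chain of $G$ were a zero-chain, then the corresponding edge of $H$ is outside the support of $\mathbf{u}$, and then Lemma~\ref{lem:support-md2} applied to $H$ (the support of $\mathbf{u}$ has minimum degree $\geq 2$) combined with $H$ being connected and the Key Lemma run inside $H$ forces the support of $\mathbf{u}$ to be a proper sub-subdivision, contradicting that $\lambda$ has order \emph{exactly} $r$ (it would then be a root of unity of smaller order relative to the true support). Pinning down this last contradiction cleanly is the crux of the proof.
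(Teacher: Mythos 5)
Your steps (1)--(3) are exactly the paper's argument: the paper's proof is a one-line application of Corollary~\ref{lem:nodes-in-n-subdiv} to each NB chain of $G$, and your decision to invoke the full Lemma~\ref{lem:nodes-in-n} with $q=r$ (to cover chains whose length is a multiple of $r$ larger than $r$ itself, where the order is neither $p$ nor $2p$) is if anything more careful than the citation given there. The genuine gap is in the step you yourself flag as the crux: ruling out chains on which $\mathbf{v}$ vanishes identically. The contradiction you hope to extract from ``$\lambda$ has order exactly $r$'' is not there, because full support simply does not follow from the hypotheses. Concretely, let $H$ be two triangles sharing a node and let $G$ be its $r$-subdivision, $r\geq 3$. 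One subdivided triangle is a circle of length $3r$; by the Lemma of \cref{sec:cycle-graphs} it carries an eigenfunction $\mathbf{v}$ for any primitive $r$th root of unity $\lambda$ (note $r$ divides $3r$) with $\vintok{v}{w}=0$ at the shared true node $w$ of degree $4$, and by Theorem~\ref{thm:non-leaky-gluing} (equivalently, Proposition~\ref{pro:glue-cycle} with the gluing at $w$) this extends by zero to an eigenfunction of $G$ with eigenvalue $\lambda$ of order exactly $r$. Its support is a proper sub-subdivision, no contradiction with the order of $\lambda$ arises, and every node of the other subdivided triangle has $\vintok{v}{k}=0$, hence lies in $N(G,\mathbf{v})$ --- including subdivision nodes that are neither true nor middle nodes. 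So the propagation argument you sketch (via Proposition~\ref{pro:subdiv-functions}, connectedness of $H$, and Lemma~\ref{lem:support-md2}) cannot be completed; the eigenfunction of $H$ you obtain lives in the $+1$-eigenspace, whose basis elements are supported on single cycles, and nothing forces it to touch every edge.

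What your steps (1)--(3) actually deliver, unconditionally, is: every true node and every middle node is in $N(G,\mathbf{v})$; on any chain meeting the support of $\mathbf{v}$, the nodes of $N(G,\mathbf{v})$ are exactly the true and middle nodes of that chain; on a chain disjoint from the support, every node is in $N(G,\mathbf{v})$. The stated equality therefore holds precisely when $\mathbf{v}$ is nonzero on every NB chain, and this nonvanishing is an implicit hypothesis of the paper's own proof as well, since it is part of the hypothesis of Corollary~\ref{lem:nodes-in-n-subdiv} being applied to each chain. Note also that the later use of this proposition in the proof of Theorem~\ref{thm:main} only needs the inclusion of the true (and middle) nodes into $N$, which your argument does establish without any full-support assumption. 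The correct repair is thus not to prove full support (it is false in general) but to either add it as a hypothesis or state the conclusion as the dichotomy above; as written, your step (4) would fail exactly at the zero-chain case you identified.
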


\begin{proof}
The result follows from applying Corollary \ref{lem:nodes-in-n-subdiv} in turn to each NB chain of $G$, and the observation that each true node is always the endpoint of some NB chain.
\end{proof}

\begin{proposition}[\bf{Gluing arbitrary roots in a subdivision}]\label{pro:glue-subdiv}
Let $G, r, H$ and $\mathbf{v}$ be as in Proposition \ref{pro:glue-subdiv-primitive}.
Additionally, suppose $\lambda$ has order $q$, not necessarily equal to $r$.
Then, each true node of $G$ is in $N(G, \mathbf{v})$.
A subdivision node is in $N(G, \mathbf{v})$ if and only if its distance to the nearest true node is a multiple of $q/2$, as long as said distance is an integer.
There are no other nodes in $N(G, \mathbf{v})$.
\end{proposition}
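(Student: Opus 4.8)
The plan is to reduce the statement to Lemma \ref{lem:nodes-in-n}, applied one NB chain at a time. First I would note that since $\mathbf{v}$ is a unitary eigenfunction, its support is a union of NB chains of $G$ (Theorem \ref{thm:support}), and — as is the case in the applications we care about, where $G$ is obtained by detaching the support from a larger graph — $G$ may be taken to be precisely that support, so that $\mathbf{v}$ is nonzero on every edge of every NB chain and Lemma \ref{lem:nodes-in-n} applies without caveat; otherwise one restricts the discussion to the support. Since $G$ has at least two cycles it is not a circle graph, so every node of $G$ lies on at least one NB chain, and it suffices to identify, chain by chain, which nodes lie in $N(G,\mathbf{v})$.

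Fix an NB chain $C = (i_0, i_1, \dots, i_p)$ of $G$ with $d_{i_0}, d_{i_p} > 2$ (allowing $i_0 = i_p$ for a closed chain). Because $G$ is the $r$-subdivision of $H$, the true nodes lying on $C$ are exactly $i_0, i_r, i_{2r}, \dots, i_p$; in particular $p = r\ell$ for some $\ell \geq 1$, consecutive true nodes along $C$ sit at distance exactly $r$, and every remaining node of $C$ is a subdivision node. Lemma \ref{lem:nodes-in-n} then says that $i_j \in N(G,\mathbf{v})$ if and only if $2j$ is a multiple of $q$, and that no further node of $C$ belongs to $N$. So the whole proof becomes a matter of re-reading the condition ``$q \mid 2j$'' in terms of the distance from $i_j$ to the nearest true node, for which I would use the fact that $q \mid 2r$: this is automatic when every NB chain of $G$ has length $r$ (for instance when $H$ has no nodes of degree $2$), since then the Key Lemma \ref{lem:path-degree-2} already gives $\lambda^{2r} = 1$, and it holds more generally in the setting relevant here, where $\lambda$ is an $r$-th root of $\pm 1$ (cf.\ Theorem \ref{thm:subdiv}).

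Granting $q \mid 2r$, the argument proceeds cleanly. The endpoints $i_0, i_p$ have degree $> 2$, hence lie in $N$ because $\mathbf{v}$ is non-leaky; an interior true node $i_{tr}$ satisfies $2(tr) = t\cdot(2r) \equiv 0 \pmod q$, so it too lies in $N$ — this proves the first claim, that every true node is in $N$. For a subdivision node on $C$, write it as $i_j$ with $j = tr + d$, where $i_{tr}$ is the true node immediately preceding it and $0 < d < r$, so that its distance to the nearest true node is $\delta = \min(d,\, r-d)$. Since $q \mid 2tr$ we have $2j \equiv 2d \equiv -2(r-d) \pmod q$, whence $q \mid 2j \iff q \mid 2\delta \iff \delta$ is a multiple of $q/2$. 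Combining this with Lemma \ref{lem:nodes-in-n} in both directions, and with the fact that every node of $G$ lies on some NB chain, yields exactly the stated characterization, the ``no other nodes'' clause included.

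I expect the main obstacle to be the interior true nodes: unlike chain endpoints they have degree $2$, so their membership in $N$ is not handed to us by non-leakiness and genuinely rests on $q \mid 2r$, so the delicate point is pinning down why this divisibility holds in the intended generality — via the Key Lemma when all NB chains have length $r$, and otherwise from $\lambda^r$ being real. A secondary nuisance is the bookkeeping for closed NB chains, where one node plays the role of both endpoints and the nearest true node (and the distance $\delta$) must be computed going around the chain; this affects only the description of $\delta$, not the underlying arithmetic.
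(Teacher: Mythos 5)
Your proposal is correct and follows essentially the same route as the paper, whose entire proof is the one-liner ``direct from applying Lemma~\ref{lem:nodes-in-n} to each NB chain of $G$ in turn''; your chain-by-chain translation of the condition $q \mid 2j$ into the distance-to-nearest-true-node condition is exactly the intended argument. The two caveats you flag are real and are left implicit in the paper: the statement tacitly assumes $\mathbf{v}$ meets every chain (i.e.\ $G$ is taken to be the support), and the claim that interior true nodes lie in $N(G,\mathbf{v})$ genuinely needs $q \mid 2r$ (equivalently $\lambda^{r}=\pm1$), which fails if one applies the proposition with a subdivision parameter $r$ not adapted to $q$ (e.g.\ when $H$ itself has nonreal unitary eigenvalues), so your explicit isolation of that hypothesis is a strengthening rather than a gap.
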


\begin{proof}
Direct from applying Lemma \ref{lem:nodes-in-n} to each NB chain of $G$ in turn.
\end{proof}

The case when $G$ is a circle graph is essentially the same though the proof is more subtle, due to the discussion in \cref{sec:cycle-graphs}.

\begin{proposition}[\bf{Gluing a circle}]\label{pro:glue-cycle}
Suppose $G$ is a circle graph of length $r$ and fix one of its NB eigenvalues $\lambda$.
With the notations of \cref{sec:cycle-graphs}, let $\mathbf{v} \in W$ and let $w$ be the unique node such that $\vintok{v}{w} = 0$.
Then the following are true.
\begin{enumerate}
\item If $r$ is odd then $N(G,\mathbf{v}) = \{ w \}$.
\item If $r$ is even then $N(G,\mathbf{v}) = \{ w, w' \}$, where $w'$ is the node diametrically opposite $w$.
\item Among all eigenfunctions of $\lambda$, $\mathbf{v}$ is the only one that admits non-empty $N(G,\mathbf{v})$, up to scalar multiples and up to the choice of $w$.
\end{enumerate}
\end{proposition}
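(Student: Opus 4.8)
The plan is to reduce everything to the two node functions $\mathbf{u}, \mathbf{v}$ exhibited in the Lemma of \Cref{sec:cycle-graphs} and to explicit calculation of $\vintok{(\cdot)}{i_j}$ along the cycle. First I would set up coordinates: label the nodes of $G$ consecutively as $w = i_1, i_2, \ldots, i_r$, with $w = i_1$ the distinguished node, and recall that $\{\mathbf{u}, \mathbf{v}\}$ is a basis of the two-dimensional eigenspace $V$ of $\lambda$, with $\mathbf{v}$ supported on the ``forward'' orientation and $\mathbf{u}$ on the ``backward'' orientation. Since $\lambda$ is an $r$th root of unity and each interior node $i_j$ has degree $2$, the condition $\vintok{v}{i_j} = 0$ is a single scalar equation in each node, and I would compute it directly from the formulas $\mathbf{v}_{i_j \to i_{j+1}} = \lambda^{-j}$, $\mathbf{v}_{i_{j+1}\to i_j}=0$. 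For the forward function this gives $\vintok{v}{i_j} = \mathbf{v}_{i_{j-1}\to i_j} = \lambda^{-(j-1)} \neq 0$ for every $j \neq 1$, and $\vintok{v}{i_1} = \mathbf{v}_{i_r \to i_1}$, which is $0$ precisely because of the defining property $\mathbf{v} \in W(w)$. Hence $N(G,\mathbf{v}) \supseteq \{w\}$ and the only candidates for membership are $w$ itself and the other nodes, which we must rule in or out.

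For claim (1), when $r$ is odd: by the computation above $\vintok{v}{i_j}\ne 0$ for $2 \le j \le r$, so $N(G,\mathbf{v}) = \{w\}$ exactly. For claim (2), when $r$ is even: here I would redo the arithmetic more carefully, because the eigenfunction $\mathbf{v} \in W$ need not be the pure forward function $\mathbf{v}$ of the Lemma — by that Lemma, the functions in $W$ are exactly the multiples of $\mathbf{u} - \mathbf{v}$ (with $\mathbf{u},\mathbf{v}$ the basis functions there). Writing out $(\mathbf{u}-\mathbf{v})_{i_j \to i_{j+1}} = -\lambda^{-j}$ and $(\mathbf{u}-\mathbf{v})_{i_{j+1}\to i_j} = \lambda^{-j}$, I would compute $\vintok{(\mathbf{u}-\mathbf{v})}{i_j}$ for an interior node: it equals $(\mathbf{u}-\mathbf{v})_{i_{j-1}\to i_j} + (\mathbf{u}-\mathbf{v})_{i_{j+1}\to i_j}$, and this vanishes iff $-\lambda^{-(j-1)} + \lambda^{-(j+1)} = 0$, i.e.\ iff $\lambda^{2} = 1$ — impossible when $\mathbf{v}$ is nonreal, but possible for $\lambda = \pm 1$. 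To cover the real and nonreal cases uniformly I would instead track $N$ via the ``leak'' condition using that $\lambda^r = 1$: one finds $\vintok{(\mathbf{u}-\mathbf{v})}{w}=0$ by the defining property, and going around the cycle, the next node where the incoming contributions cancel is the one at distance $r/2$ from $w$ (the ``diametrically opposite'' node $w'$), because there the two arms of length $r/2$ contribute phases $\lambda^{r/2}$ and $\lambda^{-r/2}$ which sum to zero precisely when $\lambda^r=1$; no other interior node works. This yields $N(G,\mathbf{v}) = \{w, w'\}$.

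For claim (3), I would argue that any eigenfunction $\mathbf{x}$ of $\lambda$ with $\vintok{x}{k}=0$ for some node $k$ lies in $W(k)$ by definition, and $\dim W(k) = 1$ by the Lemma of \Cref{sec:cycle-graphs}; so $\mathbf{x}$ is a scalar multiple of the distinguished generator of $W(k)$, which plays the role of $\mathbf{v}$ with $w = k$. Thus up to scalars and up to which node is designated $w$, there is a unique such eigenfunction, and all other eigenfunctions of $\lambda$ have $\vintok{(\cdot)}{k}\ne 0$ for every node $k$, i.e.\ $N(G,\cdot)=\emptyset$. The main obstacle I anticipate is the even case: one must be careful that the eigenfunction in $W$ is not the ``pure'' one-directional function but the antisymmetric combination, and that the cancellation at the antipodal node genuinely uses $\lambda^r = 1$ rather than $\lambda^2=1$; getting the indexing modulo $r$ and the phase bookkeeping exactly right (especially distinguishing the true-node endpoint picture inherited from Lemma \ref{lem:nodes-in-n} applied to the ``closed chain'' $O$) is where the real work lies, though it is ultimately elementary.
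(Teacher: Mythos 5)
Your overall strategy (explicit computation of $\vintok{\cdot}{i_j}$ around the circle, plus $\dim W(k)=1$ for claim~3) is viable and is essentially a hand re-derivation, on the closed chain, of the computation the paper imports wholesale from Lemma~\ref{lem:nodes-in-n}; your claim-3 argument matches the paper's exactly. But the execution of claims~1 and~2 has genuine errors. For claim~1 you carry out the computation with the pure one-directional function $\mathbf{v}_{i_j\to i_{j+1}}=\lambda^{-j}$, $\mathbf{v}_{i_{j+1}\to i_j}=0$, and then assert $\vintok{v}{i_1}=\mathbf{v}_{i_r\to i_1}=0$ ``by the defining property $\mathbf{v}\in W(w)$.'' This is self-contradictory: with that formula $\mathbf{v}_{i_r\to i_1}=\lambda^{-r}=1\neq 0$, and indeed the lemma of \cref{sec:cycle-graphs} states explicitly that the one-directional basis functions are \emph{not} in $W$ -- a one-directional wave on a circle has exactly one nonzero term entering each node, so it can never satisfy $\vintok{v}{k}=0$ anywhere. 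The element of $W$ is necessarily a combination of the forward and backward waves (as you do acknowledge only in the even case), so the odd case cannot be read off ``the computation above.''

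In the even case the algebra also goes wrong in a telling way: your vanishing condition for an interior node comes out independent of $j$ (``$\vintok{(\mathbf{u}-\mathbf{v})}{i_j}=0$ iff $\lambda^2=1$''), which would force $N(G,\mathbf{v})$ to be either all nodes or essentially none -- incompatible with the statement you are proving. The correct computation, done with the actual generator of $W(w)$ (the combination of the two waves normalized so that $\vintok{\cdot}{w}=0$), gives a distance-dependent criterion: the forward and backward contributions entering $i_j$ differ by a factor $\lambda^{2d}$ where $d$ is the distance from $i_j$ to $w$, so $\vintok{v}{i_j}=0$ iff $\lambda^{2d}=1$. Your patch for the antipodal node (``phases $\lambda^{r/2}$ and $\lambda^{-r/2}$ sum to zero precisely when $\lambda^r=1$'') is likewise false as stated: when $\lambda^r=1$ these two phases are equal ($\pm1$ each), and the cancellation is a difference, not a sum. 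The clean way to repair all of this -- and it is exactly the paper's route -- is to treat the circle as a single closed NB chain with both endpoints equal to $w$, observe that the only property of chain endpoints used in Lemma~\ref{lem:nodes-in-n} is $\vintok{v}{\text{endpoint}}=0$ (which holds here by the choice $\mathbf{v}\in W$), and then read off from that lemma that the nodes of $N$ are precisely those at distance $d$ from $w$ with $\lambda^{2d}=1$, yielding $\{w\}$ for $r$ odd and $\{w,w'\}$ for $r$ even.
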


\begin{proof}
We may consider $w$ to be the two endpoints of a (closed) NB chain in $G$, and relabel the rest of the nodes as $w=i_0, i_1, \ldots, i_r = w$.
In the present situation, almost all assumptions of Lemma \ref{lem:nodes-in-n} are valid except for one exception: the endpoints of the stipulated NB chain do not have degrees greater than $2$ (in the present case, both endpoints are $w$).
However, by construction we have $\vintok{v}{w} = 0$.
This is in fact the only property of the endpoints that was used in Lemma \ref{lem:nodes-in-n} and we can therefore apply it here.
We conclude that $w \in N(G, \mathbf{v})$, that if $r$ is even then $i_{r/2} (= w')$ is also in $N(G, \mathbf{v})$, and that no other node is in $N(G, \mathbf{v})$.

The third claim is true because if an eigenfunction $\mathbf{u}$ satisfies $\vintok{u}{w} = 0$ then it must be a scalar multiple of $\mathbf{v}$.
Lastly, one may construct a different function $\mathbf{x}$ that satisfies $\vintok{x}{z}$ for some other node $z$.
In that case, the result is the same up to a relabeling of the nodes.
\end{proof}

\begin{remark*}
Example \ref{exm:circle-as-subdiv} argued that the circle graph $G$ of length $r$ is the $r$-subdivision of the graph $O$ with one node and one self-loop.
Thus, $G$ has a single true node and, if $r$ is even, a single middle node (up to relabelling of the nodes).
In this light, Propositions \ref{pro:glue-subdiv-primitive} and \ref{pro:glue-cycle} have equivalent conclusions.
\end{remark*}

We close this section by exhibiting some further properties of the set $N$.

\begin{corollary}\label{lem:def-ngq}
Let $G$ be a graph subdivision with at least two cycles.
Suppose $\lambda$ is a NB nonreal unitary eigenvalue of $G$ with order $q$, and let $\mathbf{v}_1,\mathbf{v}_2$ be two eigenfunctions of $\lambda$.
Furthermore, suppose $\mu \neq \lambda$ is another nonreal unitary eigenvalue also with order $q$ and with eigenfunction $\mathbf{u}$.
Then $N(G,\mathbf{u}) = N(G,\mathbf{v_1}) = N(G,\mathbf{v_2})$.
\end{corollary}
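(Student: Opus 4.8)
The plan is to derive the statement directly from \Cref{pro:glue-subdiv}, whose conclusion pins down $N(G,\mathbf{v})$ by a description that never mentions $\mathbf{v}$ or its eigenvalue: only the combinatorics of $G$ together with the order $q$. Since $G$ has at least two cycles it is not a circle graph, so the circle-graph analysis of \Cref{pro:glue-cycle} is not needed and \Cref{pro:glue-subdiv} applies (the standing hypothesis of minimum degree at least $2$ is in force throughout). First I fix, once and for all, a representation of $G$ as the $r$-subdivision of some graph $H$; this is precisely what makes the notions of \emph{true node} and \emph{subdivision node} of $G$ well-defined, and the same representation is used for all three eigenfunctions, which is legitimate since they all live on the same graph $G$.

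Next I apply \Cref{pro:glue-subdiv} three times, to the pairs $(\lambda,\mathbf{v}_1)$, $(\lambda,\mathbf{v}_2)$, and $(\mu,\mathbf{u})$. In every case the hypotheses hold verbatim: $G$ is the $r$-subdivision of $H$, it has at least two cycles and minimum degree at least $2$, and the eigenvalue in question ($\lambda$, $\lambda$, or $\mu$) is a nonreal unitary NB eigenvalue of order exactly $q$. The proposition then returns, identically in all three cases, the set consisting of every true node of $G$ together with every subdivision node of $G$ whose distance to the nearest true node is an integer multiple of $q/2$. This set is a fixed subset of $V(G)$ determined only by the subdivision structure of $G$ and the integer $q$; it carries no trace of the eigenfunction, nor of which primitive $q$th root of unity was chosen. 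Hence $N(G,\mathbf{u})=N(G,\mathbf{v}_1)=N(G,\mathbf{v}_2)$, as claimed.

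All of the mathematical substance therefore lives in \Cref{pro:glue-subdiv} and, through it, in \Cref{lem:nodes-in-n}; once those are in hand the corollary is a formal observation, so there is essentially no obstacle to surmount. Two small points deserve a sentence of care. First, the labels ``true node'' and ``subdivision node'' are attached relative to the chosen representation of $G$ as a subdivision; but $N(G,\cdot)$ is an intrinsic subset of $V(G)$, so any admissible choice of $r$ and $H$ yields the same set and the conclusion is unambiguous. Second, \Cref{lem:nodes-in-n}, which drives \Cref{pro:glue-subdiv}, characterizes $N$ along an NB chain only when $\mathbf{v}$ does not vanish identically on that chain, consistent with the standpoint of this section, in which the ambient graph arises as the support of the eigenfunction (cf. \Cref{thm:support}); under that reading the hypothesis is met on every chain and the description above holds on all of $G$.
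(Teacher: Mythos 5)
Your proof is correct and follows essentially the same route as the paper: the paper's own argument simply observes that Propositions \ref{pro:glue-subdiv-primitive}, \ref{pro:glue-subdiv}, and \ref{pro:glue-cycle} determine $N(G,\cdot)$ using only the order $q$ and the lengths of the NB chains, never the particular value of $\lambda$ or any other property of the eigenfunction, and then notes that every edge of a subdivision lies in exactly one chain. Your closing remark about chains on which the eigenfunction might vanish identically addresses a caveat the paper leaves implicit (it tacitly treats $G$ as the support of the eigenfunctions involved), so nothing further is needed.
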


\begin{proof}
The proofs of Propositions \ref{pro:glue-subdiv-primitive}, \ref{pro:glue-subdiv}, and \ref{pro:glue-cycle} depend only on the order of primitivity of $\lambda$ and the length of the underlying chain and not on the actual value of $\lambda$ or on any other property of $\mathbf{v}$.
Since every edge in a graph subdivision is contained in exactly one NB chain, the claim follows.
\end{proof}

\begin{definition}\label{def:ngq}
If $G$ has at least two cycles and $\lambda$ is a unitary NB eigenvalue with $\lambda^q = 1$ and eigenfunction $\mathbf{v}$, define $N(G,q) \coloneqq N(G,\mathbf{v})$.
This is well defined due to Corollary \ref{lem:def-ngq}.
If $G$ has no eigenvalues satisfying $\lambda^{q} = 1$, we let $N(G,q) \coloneqq \emptyset$.
\end{definition}

\begin{corollary}\label{cor:all-transfer}
Let $G$ be a graph subdivision with at least two cycles and $q$ be a positive integer such that $N(G,q)$ is non-empty.
Let $X$ be the result of gluing $G$ to another graph $H$ by identifying some nodes in $N(G, q)$.
If $\lambda^q = 1$ is a NB eigenvalue of $G$ then all eigenfunctions of $\lambda$ are transferred to $X$.
\end{corollary}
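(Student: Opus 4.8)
The plan is to show that the conclusion of Corollary \ref{cor:all-transfer} follows almost immediately by combining Theorem \ref{thm:non-leaky-gluing} with the well-definedness of $N(G,q)$ established in Definition \ref{def:ngq}. First I would fix a unitary eigenvalue $\lambda$ of $G$ with $\lambda^q = 1$, and let $\mathbf{v}$ be an arbitrary eigenfunction of $\lambda$; I want to prove that $\mathbf{v}$ is transferred to $X$. By Definition \ref{def:ngq} and Corollary \ref{lem:def-ngq}, the set $N(G, \mathbf{v})$ depends only on $G$ and $q$, not on which eigenfunction of $\lambda$ we picked; in symbols $N(G, \mathbf{v}) = N(G, q)$. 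This is the conceptual crux: the gluing $\sim$ was specified in the statement by identifying nodes in $N(G,q)$, and we now know this is exactly the same as identifying nodes in $N(G, \mathbf{v})$ for \emph{any} eigenfunction $\mathbf{v}$ of $\lambda$, so the hypothesis of Theorem \ref{thm:non-leaky-gluing} is met for each such $\mathbf{v}$ simultaneously.

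Next I would invoke Theorem \ref{thm:non-leaky-gluing} directly. Since $G$ is a graph subdivision with at least two cycles it has minimum degree at least $2$ (it is not a forest), and we may assume the same of $H$, as is standing convention in \cref{sec:gluing}; the gluing $\sim$ identifying nodes of $N(G,q) = N(G,\mathbf{v})$ is a valid non-leaky gluing in the sense of Definition \ref{def:non-leaky-gluing}. Therefore Theorem \ref{thm:non-leaky-gluing} yields an eigenfunction $\mathbf{u}$ of $X$ with eigenvalue $\lambda$ that equals $\mathbf{v}$ on $E(G) \subset E(X)$ and vanishes on $E(H) \subset E(X)$; that is, $\mathbf{v}$ is transferred to $X$. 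Since $\mathbf{v}$ was an arbitrary eigenfunction of $\lambda$, every eigenfunction of $\lambda$ is transferred.

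One small point deserves care: ``all eigenfunctions are transferred'' should be understood as saying that the transfer map $\mathbf{v} \mapsto \mathbf{u}$ carries the whole eigenspace of $\lambda$ in $G$ injectively into the eigenspace of $\lambda$ in $X$. Injectivity is clear because $\mathbf{u}$ restricted to $E(G)$ recovers $\mathbf{v}$, so the map is a left inverse to restriction; and linearity is immediate from the explicit description $\mathbf{u}|_{E(G)} = \mathbf{v}$, $\mathbf{u}|_{E(H)} = 0$. I do not expect any genuine obstacle here — the whole content has already been done in Theorem \ref{thm:non-leaky-gluing} and Corollary \ref{lem:def-ngq}; the only thing this corollary adds is the observation that since $N(G,q)$ is eigenfunction-independent, a single choice of gluing works uniformly for the entire eigenspace rather than for one eigenfunction at a time. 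The mild subtlety, if any, is just to state cleanly that ``transferred'' is meant in this uniform, eigenspace-wide sense, which I would spell out in one sentence.

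\begin{proof}
Fix a NB eigenvalue $\lambda$ of $G$ with $\lambda^q = 1$, and let $\mathbf{v}$ be any eigenfunction of $\lambda$. By Corollary \ref{lem:def-ngq} and Definition \ref{def:ngq}, we have $N(G, \mathbf{v}) = N(G, q)$, independently of the choice of $\mathbf{v}$ among the eigenfunctions of $\lambda$. In particular, the gluing $\sim$ used to construct $X$, which identifies certain nodes of $N(G, q)$ with nodes of $H$, is a non-leaky gluing in the sense of Definition \ref{def:non-leaky-gluing} with respect to $\mathbf{v}$, since the nodes being identified all lie in $N(G, \mathbf{v})$. As $G$ has at least two cycles it is not a forest and hence has minimum degree at least $2$; together with the standing assumption that $H$ has minimum degree at least $2$, the hypotheses of Theorem \ref{thm:non-leaky-gluing} are satisfied. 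That theorem therefore provides an eigenfunction $\mathbf{u}$ of $X$ with eigenvalue $\lambda$ such that $\mathbf{u}$ equals $\mathbf{v}$ on $E(G) \subset E(X)$ and vanishes on $E(H) \subset E(X)$; that is, $\mathbf{v}$ is transferred to $X$. Since $\mathbf{v}$ was an arbitrary eigenfunction of $\lambda$, all eigenfunctions of $\lambda$ are transferred, and the assignment $\mathbf{v} \mapsto \mathbf{u}$ is linear and injective because restricting $\mathbf{u}$ to $E(G)$ recovers $\mathbf{v}$.
\end{proof}
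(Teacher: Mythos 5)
Your proof is correct and follows essentially the same route as the paper, which simply cites Corollary \ref{lem:def-ngq}: since $N(G,\mathbf{v})$ is the same set for every eigenfunction $\mathbf{v}$ of $\lambda$, the one chosen gluing is non-leaky for all of them simultaneously, and Theorem \ref{thm:non-leaky-gluing} transfers each one. (One tiny quibble: ``at least two cycles, hence not a forest, hence minimum degree at least $2$'' is not a valid inference on its own, but the paper's standing assumption of minimum degree at least $2$ covers this anyway.)
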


\begin{proof}
Direct from Corollary \ref{lem:def-ngq}.
\end{proof}

\begin{corollary}\label{cor:all-transfer-odd}
Let $G$ be a graph subdivision with at least two cycles, and let $q,q'$ be two positive integers such that $q$ is a multiple of $q'$.
Then, $N(G, q) \subset N(G, q')$.
Furthermore, if $q$ is odd, then $N(G, 2q) = N(G, q)$.
\end{corollary}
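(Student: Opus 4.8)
The plan is to derive both inclusions directly from the explicit description of $N(G,q)$ furnished by Propositions \ref{pro:glue-subdiv} and \ref{pro:glue-subdiv-primitive} (and Proposition \ref{pro:glue-cycle} in the circle-graph case), reducing everything to an elementary divisibility bookkeeping on a single NB chain. Since $G$ is a graph subdivision, every edge lies in a unique NB chain, so it suffices to argue chain-by-chain and then take the union over all chains; the true-node endpoints belong to $N(G,q')$ and $N(G,q)$ simultaneously and cause no trouble, so the content is entirely about the subdivision nodes of a fixed chain.

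First I would dispose of the degenerate cases: if $N(G,q)=\emptyset$ there is nothing to prove for $N(G,q)\subset N(G,q')$, and if $G$ admits a real unitary eigenvalue of the relevant order then $N(G,q)$ is the whole node set by the remark opening \Cref{sec:characterizing}, so both claims are immediate. Thus assume $\lambda$ is a nonreal unitary NB eigenvalue of order exactly $q$; since $q$ is a multiple of $q'$, pick (or note the existence of) a nonreal unitary NB eigenvalue of order $q'$ — for instance $\lambda^{q/q'}$, whose eigenfunctions are supported on the same subdivision structure, or simply invoke Definition \ref{def:ngq} which makes $N(G,q')$ well-defined regardless — and apply Proposition \ref{pro:glue-subdiv} to both. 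By that proposition, a subdivision node $u$ at integer distance $d$ from the nearest true node lies in $N(G,q)$ iff $d$ is a multiple of $q/2$, and lies in $N(G,q')$ iff $d$ is a multiple of $q'/2$. Since $q/q'$ is an integer, $q/2$ is a multiple of $q'/2$, so $q/2 \mid d$ implies $q'/2 \mid d$: this gives $N(G,q)\subset N(G,q')$. (If $G$ is a circle graph one uses Proposition \ref{pro:glue-cycle} instead, where the same "distance a multiple of the half-order" criterion holds via the middle-node description in the remark following that proposition.)

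For the second claim, suppose $q$ is odd and consider $N(G,2q)$ versus $N(G,q)$. The first part already gives $N(G,2q)\subset N(G,q)$ since $q \mid 2q$. For the reverse inclusion, take a subdivision node at integer distance $d$ from the nearest true node with $d$ a multiple of $q/2$; I must show $d$ is a multiple of $2q/2 = q$. Here is where the parity hypothesis does the work: since $q$ is odd, $q/2$ is not an integer, so the condition "$d$ is an integer multiple of $q/2$ with $d$ itself an integer" forces $d$ to be a multiple of $q$ (the smallest integer multiple of the half-integer $q/2$ that is itself an integer is $q$). Hence $d$ is a multiple of $q = 2q/2$, so the node lies in $N(G,2q)$ as well, and the inclusions combine to $N(G,2q)=N(G,q)$. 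The true-node endpoints and middle nodes (when present) are handled identically by checking the relevant distance is $0$ or exactly half the chain length, which the parity argument again pins down.

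The only real subtlety — and the step I expect to need the most care — is making sure the "distance is an integer" proviso in Proposition \ref{pro:glue-subdiv} is handled uniformly: a subdivision node at half-integer distance from every true node is in neither $N(G,q)$ nor $N(G,q')$, so it is consistent with both inclusions, but one should state this explicitly rather than silently assume all relevant distances are integers. A clean way to package the whole argument is to observe, once and for all, that for any positive even integer $s$ and any integer $d\ge 0$, the predicate "$s/2 \mid d$" is monotone in $s$ along the divisibility order on even integers, and that for odd $t$ the predicate "$t/2 \mid d$ and $d\in\mathbb{Z}$" is equivalent to "$t \mid d$"; both facts are one-line verifications, and feeding them into the explicit descriptions of $N(G,\cdot)$ from the three gluing propositions yields the corollary with essentially no further computation.
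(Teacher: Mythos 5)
Your proposal is correct and follows essentially the same route as the paper: both arguments reduce the claims to the distance characterization of $N(G,\cdot)$ in Proposition \ref{pro:glue-subdiv} (multiples of $q/2$ versus $q'/2$), and both obtain the odd case from the observation that an integer multiple of $q/2$ with $q$ odd must be a multiple of $q$. Your write-up merely spells out the divisibility bookkeeping and edge cases (empty $N$, the integrality proviso, the circle-graph variant) that the paper's terse proof leaves implicit.
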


\begin{proof}
The first claim is direct from Proposition \ref{pro:glue-subdiv}.
For the second claim, suppose $q$ is odd.
In this case, Proposition \ref{pro:glue-subdiv} states that for every node in $N(G, q)$, the distance to the nearest true node is in the set $\{0, q/2, q, 3q/2, 2q, \ldots\}$, while for every node in $N(G, 2q)$, the distance to the nearest true node is in the set $\{0, q, 2q, \ldots\}$.
But the distance must always be an integer, and we are done.
\end{proof}

\begin{remark*}
Results \ref{lem:def-ngq} and \ref{cor:all-transfer-odd} also apply to the case when $G$ is a circle graph, because the NB eigenvalues of the circle graph of length $r$ are the $r^{th}$ roots of unity, each with multiplicity $2$.
On the other hand, result \ref{cor:all-transfer} does not apply to circle graphs.
Indeed, result \ref{pro:glue-cycle} showed that in this case only one out of the two different eigenfunctions is transferred.
\end{remark*}

\begin{remark*}
Theorem \ref{thm:non-leaky-gluing} showed that a single eigenfunction is transferred to $X$ via non-leaky gluing.
Corollary \ref{cor:all-transfer} extends this result, for graphs with at least two cycles, to prove that all eigenfunctions of all eigenvalues of the same order $q$ are transferred at the same time.
Furthermore, when $q$ is odd, Corollary \ref{cor:all-transfer-odd} shows that all eigenfunctions of all eigenvalues with order $q$ and $2q$ are transferred at the same time.
\end{remark*}

%%%%%%%%%%%%%%%%%%%%%%%%%%%%%%%%%%%%%%%%%%%%%%%%%%%%%%%%%%%%%%%%
\subsection{Partial support}\label{sec:partial-support}
Before we can make general claims about the geometric multiplicity in the nonreal case, we must first consider the following situation.
Construct $X$ by gluing $G$ and $H$ together.
The previous sections establish when the eigenfunctions of $G$ and $H$ are transferred to $X$.
However, it is possible that new eigenfunctions arise in $X$ that come neither from $G$ nor $H$.
This happens, for example, when there are subgraphs $G', H'$ of $G, H$, respectively, that serve as partial support of an eigenfunction.

\begin{figure}[t]
\centering
\includegraphics[width=\textwidth]{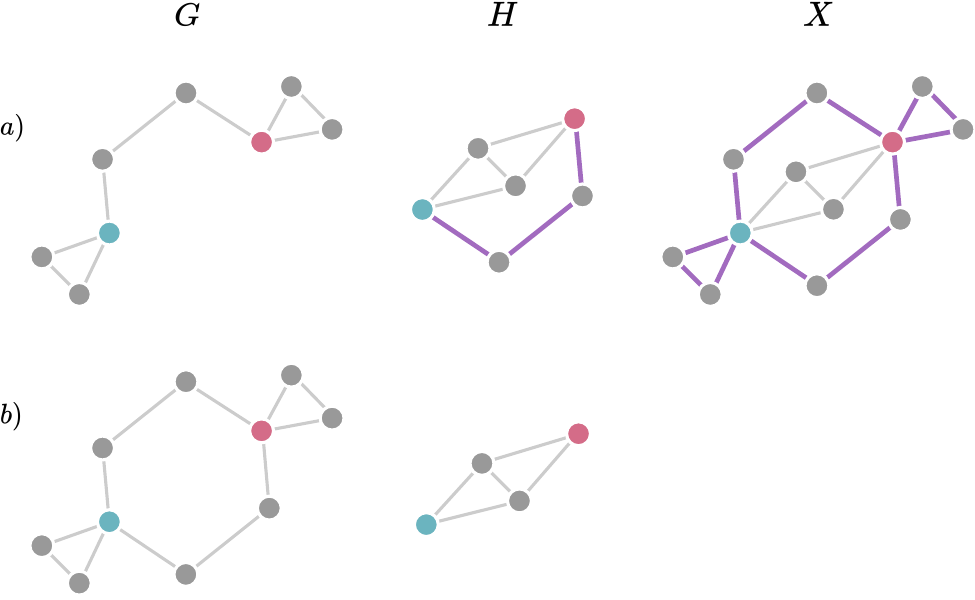}
\caption{\label{fig:partial-support}
\textbf{Example of partial support.}
See Example \ref{exm:partial-support}.
}
\end{figure}

\begin{example}\label{exm:partial-support}
Let $\lambda$ be a primitive sixth root of unity.
\Cref{fig:partial-support}a shows two graphs $G$ and $H$ that are glued by identifying the color-coded nodes to form a graph $X$.
Note $G$ is the $3$-subdivision of a (multi-)graph with one even cycle (cf. \cref{fig:true_nodes}b).
Therefore, we have $\gm_G(\lambda) = 1$.
The graph $H$ does not contain a subgraph that is a $3$- or $6$-subdivision and has minimum degree $2$, thus we have
$\gm_H(\lambda) = 0$.
The graph $X$ contains a subgraph that is a $3$-subdivision (far right, thick purple edges).
A straightforward computation shows that this purple subgraph is the $3$-subdivision of a graph that contains two linearly independent even cycles.
Therefore, $\gm_X(\lambda) = 2$.
Since $G$ and $H$ have been glued to form $X$, one of the eigenfunctions of $\lambda$ in $X$ can be identified with the unique eigenfunction of $\lambda$ in $G$.
The other eigenfunction of $X$ comes from the fact that $H$ contains a single NB chain of length $3$ (top row, middle, thick red edges).
This NB chain is such that, after the gluing, together with the copy of $G$ in $X$, it forms a larger subgraph that is a $3$-subdivision (far right, thick purple edges).
This NB chain in $H$ is what we call \emph{partial support}: it is a subgraph that does not support an eigenfunction, but when glued in certain ways to other graphs, it augments the maximal subgraph in $X$ that can support eigenfunctions.

\Cref{fig:partial-support}b shows an alternative pair of graphs $G$ and $H$ that, when glued by identifying the color-coded nodes, yield the same graph $X$ as in \cref{fig:partial-support}a.
In this case, however, there is no instance of partial support, as all the eigenfunctions in $X$ come from the eigenfunctions in $G$.
\end{example}

In the rest of the manuscript we will assume that gluings are made such that there is no partial support.
As illustrated example \ref{exm:partial-support}, a sufficient condition for this is the following.
Let $S$ be a maximal connected subgraph of $X$ that is a subdivision, and recall we can write $E(X) = E(G) \coprod E(H)$.
Then, we require that either $E(S) \subset E(G)$ or $E(S) \subset E(H)$.

%%%%%%%%%%%%%%%%%%%%%%%%%%%%%%%%%%%%%%%%%%%%%%%%%%%%%%%%%%%%%%%%
\section{Geometric multiplicity}\label{sec:geometric}

Let $X$ be a connected graph possibly with multi-edges or self-loops, and let $\lambda$ be a root of unity of order $q$.
In this section we compute the geometric multiplicity of $\lambda$ as a NB eigenvalue of $X$.
It is clear by now that to do so, we must first find all NB chains in $X$ of the appropriate length and then assemble them in subgraphs of $X$ that may support an eigenfunction of $\lambda$.
In this section, by $H \subset G$ we mean $E(H) \subset E(G)$.

\begin{definition}
For a fixed positive integer $q>2$, define
\begin{equation}
\mathcal{C}_q \coloneqq \{C_q \subset G \colon C_q \text{ is a NB chain whose length is a multiple of } q \text{ or } q/2 \}.
\end{equation}
Note the case when the length of a chain is a multiple of $q/2$ is only relevant when $q$ is even.
Furthermore, let $\overline{\mathcal{C}}_q$ be the subgraph assembled from all such chains,
\begin{equation}
E \left( \overline{\mathcal{C}}_q \right) \coloneqq \bigcup_{C_q \subset \mathcal{C}_q} E(C_q).
\end{equation}
Note $\overline{\mathcal{C}}_q$ may be disconnected.
Therefore we define
\begin{equation}
\mathcal{S}_q \coloneqq \{S_q \subset \overline{\mathcal{C}}_q \colon S_q \text{ is a connected component of } \overline{\mathcal{C}}_q \}.
\end{equation}
\end{definition}

Each $S_q \in \mathcal{S}_q$ that contains at least two cycles supports at least one eigenfunction of $\lambda$.
Since each $S_q$ is maximal with respect to inclusion, there is no instance of partial support (see \cref{sec:partial-support}).

If $q$ is odd, we can directly use the graphs $S_q$ to compute the multiplicity of $\lambda$.
However, if $q$ is even, we need sometimes consider the graph of which $S_q$ is a subdivision.

\begin{definition}
For a graph $S$ that is the subdivision of some other graph, we write $\coll(S, q)$ for the graph whose $q$-subdivision is $S$, if it exists.
In that case, we say $\coll(S, q)$ is the \emph{collapse} of $S$.
\end{definition}

\begin{theorem}[\textbf{Main Theorem}]\label{thm:main}
Let $X$ be a connected graph with at least two cycles, possibly with multi-edges or self-loops, and let $\lambda$ be a root of unity of order $q>2$.
If $\mathcal{S}_q$ is empty, we have $\gm_X(\lambda) = 0$.
If $\mathcal{S}_q$ is non-empty and $q$ is odd,
\begin{equation}\label{eqn:main-odd}
\gm_X(\lambda) = \sum_{S_q \in \mathcal{S}_q} |E(S_q)| - |V(S_q)| + 1.
\end{equation}
If $\mathcal{S}_q$ is non-empty and $q$ is even,
\begin{equation}\label{eqn:main-even}
\gm_X(\lambda) = \sum_{\substack{S_q \in \mathcal{S}_q: \\ q | r(S_q)}} |E(S_q)| - |V(S_q)| + 1 \;\;+\; \sum_{\substack{S_q \in \mathcal{S}_q: \\ q \, \nmid \, r(S_q)}} |E(S_q)| - |V(S_q)| + \mathbf{1}\{ \coll(S_q, q) \textnormal{ is bipartite} \},
\end{equation}
where $r(S)$ is the maximum integer $r$ such that $S$ is the $r$-subdivision of some other graph.
\end{theorem}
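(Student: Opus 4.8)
The plan is to reduce the computation of $\gm_X(\lambda)$ to a sum of contributions coming from the connected components $S_q \in \mathcal{S}_q$, using the structural results of the previous sections, and then evaluate each contribution by invoking Theorems~\ref{thm:subdiv} and~\ref{thm:support}. First I would argue that any eigenfunction $\mathbf{v}$ of $\lambda$ has support contained in $\overline{\mathcal{C}}_q$: by Theorem~\ref{thm:support} the support $H$ of $\mathbf{v}$ consists entirely of NB chains whose lengths are multiples of $p^* = q$ (or, if $q$ is even and a chain's endpoints behave like middle nodes, a multiple of $q/2$ after recollapsing), so every edge of $H$ lies in some $C_q \in \mathcal{C}_q$, hence $E(H) \subseteq E(\overline{\mathcal{C}}_q)$. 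Since $H$ is connected (being the support of a single eigenfunction on a connected piece), $H$ lies inside a single component $S_q$. Conversely, by the no-partial-support assumption each $S_q$ is itself a maximal subdivision, so an eigenfunction supported inside $S_q$ need not "leak" into the rest of $X$; the non-leaky gluing theory of \Cref{sec:gluing} (Theorem~\ref{thm:non-leaky-gluing} and its corollaries) guarantees that eigenfunctions of $\lambda$ supported on $S_q$ extend by zero to genuine eigenfunctions of $X$. This gives a direct-sum decomposition
\begin{equation}
\ker(\mathbf{B}_X - \lambda \mathbf{I}) \;=\; \bigoplus_{S_q \in \mathcal{S}_q} \ker(\mathbf{B}_{S_q} - \lambda \mathbf{I}),
\end{equation}
so $\gm_X(\lambda) = \sum_{S_q} \gm_{S_q}(\lambda)$, and in particular $\gm_X(\lambda)=0$ when $\mathcal{S}_q$ is empty.

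Next I would compute $\gm_{S_q}(\lambda)$ for each component. Here one must split on whether $\lambda$, whose order is $q$, is an $r$th root of unity or an $r$th root of $-1$ relative to the collapse, where $r = r(S_q)$. If $q \mid r$, then $\lambda^{r} = 1$ and $S_q$ is the $r$-subdivision of $\coll(S_q, q)$, so Theorem~\ref{thm:subdiv} (first part) gives $\gm_{S_q}(\lambda) = |E(\coll(S_q,q))| - |V(\coll(S_q,q))| + 1 = |E(S_q)| - |V(S_q)| + 1$, the last equality because subdividing an edge $r$ times adds $r-1$ nodes and $r-1$ edges, so $|E| - |V|$ is subdivision-invariant. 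When $q$ is odd, $q \mid r$ always holds (since $\lambda^r = 1$ forces $q \mid r$ and the only subtlety — $q/2$ — cannot occur), which yields \eqref{eqn:main-odd}. When $q$ is even and $q \nmid r$, then necessarily $q \mid 2r$ but $q \nmid r$, i.e.\ $r \equiv q/2 \pmod q$ in the relevant sense, so $\lambda^r = -1$: now Theorem~\ref{thm:subdiv} (second part) gives $\gm_{S_q}(\lambda) = |E(\coll(S_q,q))| - |V(\coll(S_q,q))| + \mathbf{1}\{\coll(S_q,q)\text{ is bipartite}\}$, which is $|E(S_q)| - |V(S_q)| + \mathbf{1}\{\coll(S_q,q)\text{ is bipartite}\}$ by the same invariance. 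Summing over the two types of component gives \eqref{eqn:main-even}.

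The main obstacle I anticipate is making the reduction to components airtight — specifically, verifying that the decomposition above is genuinely a direct sum and that no eigenfunction of $\lambda$ can have support straddling two components or spilling outside $\overline{\mathcal{C}}_q$. This is where Theorem~\ref{thm:support} does the heavy lifting (every edge of a support lies on a chain of the right length, so the support is trapped inside $\overline{\mathcal{C}}_q$), but one must also handle the case $q=4$ carefully, where a chain of length a multiple of $q/2 = 2$ that is \emph{not} a multiple of $q$ contributes, and confirm that such chains are correctly absorbed into the $q \nmid r(S_q)$ bucket with the bipartiteness correction — this is exactly the parity phenomenon already isolated in Propositions~\ref{pro:am-minus1} and~\ref{pro:gm-minus1} and in Corollary~\ref{cor:all-transfer-odd}. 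A secondary technical point is checking that each $S_q$ with fewer than two cycles (a bare circle, or tree-like remnants) contributes $0$, so that the formula's implicit restriction to $S_q$ containing at least two cycles is harmless; this follows because $|E(S_q)| - |V(S_q)| + 1$ already equals the first Betti number, which vanishes for a single cycle only after the $-1$... so in fact one should note that circle-graph components contribute via the separate count in \Cref{rmk:circle}, and I would either fold them in explicitly or argue the assumption "at least two cycles" on $X$ together with the no-partial-support hypothesis rules out isolated-circle components of $\overline{\mathcal C}_q$ except in degenerate cases already covered.
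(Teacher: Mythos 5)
Your overall strategy matches the paper's (localize eigenfunctions to the components $S_q$ of $\overline{\mathcal{C}}_q$, then count per component via Theorem~\ref{thm:subdiv} with the $q \mid r(S_q)$ versus $q \nmid r(S_q)$ dichotomy), but there is a genuine gap in the reduction step: the claimed decomposition $\gm_X(\lambda) = \sum_{S_q} \gm_{S_q}(\lambda)$, with each summand the \emph{full} eigenspace of the standalone component, is false when some $S_q$ is a circle (or a circle with dangling chains) attached to the rest of $X$. Concretely, take $X$ to be a circle of length $q$ ($q$ odd, $q>2$) glued at a single node $g$ to a graph $H$ with two cycles and no chains of relevant length, as in \Cref{fig:detach_glue}(a). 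Then $S_q$ is that circle, so $\gm_{S_q}(\lambda)=2$ by \Cref{rmk:circle}, and your direct sum predicts $2$; but any eigenfunction of $X$ with eigenvalue $\lambda$ restricts to an eigenfunction of the circle satisfying $\vintok{v}{g}=0$ (non-leakiness at the degree-$>2$ node $g$), and by Proposition~\ref{pro:glue-cycle} that subspace is only one-dimensional, which is exactly what \cref{eqn:main-odd} gives ($|E(S_q)|-|V(S_q)|+1=1$). So the correct per-component count is not $\gm_{S_q}(\lambda)$ but the dimension of the subspace of eigenfunctions of $S_q$ with $\vintok{v}{k}=0$ at every attachment node; the paper gets this via the characterization of $N(S_q,q)$ (Propositions~\ref{pro:glue-subdiv} and~\ref{pro:glue-cycle} together with Corollary~\ref{cor:all-transfer}), which shows the two counts agree when $S_q$ has at least two cycles but differ precisely in the circle case. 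Your closing paragraph shows you sensed circles are special, but the proposed resolutions are wrong: a circle component does not contribute $0$, nor $\gm_{S_q}(\lambda)=2$, but $1$ (in the odd case), and such components are not ruled out by the hypotheses on $X$.

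A secondary, fixable imprecision: to show the support of an eigenfunction lies in $\overline{\mathcal{C}}_q$ you cite Theorem~\ref{thm:support}, which constrains the NB chains \emph{of the support} $H$; these need not be NB chains of $X$, since an interior node of an $H$-chain may have degree $>2$ in $X$. The clean argument is to apply the Key Lemma~\ref{lem:path-degree-2} directly to the NB chains of $X$: any $X$-chain of length $p$ carrying a nonzero entry of $\mathbf{v}$ forces $\lambda^{2p}=1$, hence $p$ is a multiple of $q$ ($q$ odd) or of $q/2$ ($q$ even), which is exactly membership in $\mathcal{C}_q$. Your treatment of the even case ($(q/2)\mid r(S_q)$, $q\nmid r(S_q)$ implies $\lambda^{r(S_q)}=-1$, then the bipartite correction from Theorem~\ref{thm:subdiv}) is correct and coincides with the paper's.
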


\begin{proof}
We claim that each eigenfunction of $\lambda$ must be supported on $\mathcal{S}_q$.
Recall the components $S_q$ of $\mathcal{S}_q$ were assembled using chains whose length is a multiple of $q$ or $q/2$, which guarantees that the distance between every pair of nodes with degrees greater than $2$ is a multiple of $q/2$.
Thus, every such node in $S_q$ is in $N(S_q, q)$ by Theorems \ref{pro:glue-subdiv} and \ref{pro:glue-cycle}, which in turn means we may conceive of $S_q$ as having been glued to $X \setminus S_q$ to form $X$ by a valid non-leaky gluing.
Therefore, every eigenfunction of $\lambda$ in $S_q$ is transferred to $X$, and there are no others (other than linear combinations thereof).

Since each $S_q$ is a subdivision, we may use \Cref{thm:subdiv} to compute the number of eigenfunctions transferred.
If $q$ is odd, each $S_q$ is always a $q$-subdivision of some other graph.
Thus, in this case the number of eigenfunctions transferred is $|E(S_q)| - |V(S_q)| + 1$.

Now assume $q$ is even.
In this case, each $S_q$ may be a $q$-subdivision or not.
If it is not, then it must be a $q/2$-subdivision.
This is because $S_q$ is assembled from chains whose lengths devide $q$ or $q/2$.

If $S_q$ is a $q$-subdivision, or equivalently if $q | r(S_q)$, the $q$th roots of unity are eigenvalues of $S_q$ corresponding to the eigenvalue $+1$ in $\coll(S_q, q)$.
Again due to \Cref{thm:subdiv}, there are $E(\coll(S_q, q)) - V(\coll(S_q, q)) + 1$ of these eigenvalues.
This in turn equals $E(S_q) - V(S_q) + 1$, as taking subdivisions does not change the number of cycles in a graph.

If $S_q$ is not a $q$-subdivision, or equivalently if $q \nmid r(S_q)$, it must be a $q/2$-subdivision, and the $q$th roots of unity are eigenvalues of $S_q$ corresponding to the eigenvalue $-1$ in $\coll(S_q, q)$.
Using \Cref{thm:subdiv} again finishes the proof.
\end{proof}

\begin{remark}\label{rmk:main}
The Main \Cref{thm:main} does not claim that each $S_q$ is the support of an eigenfunction of $\lambda$.
This is in fact the case when $S_q$ has minimum degree at least $2$.
However, it is possible that some $S_q$ have nodes of degree $1$, for example if there is a ``dangling'' open NB chain.
In this case, the dangling chain is not part of the support of any eigenfunction, as the support must always have minimum degree at least $2$, see \Cref{lem:support-md2}.
Nevertheless, this case does not affect the formulas \eqref{eqn:main-odd} and \eqref{eqn:main-even} because such a dangling chain does not change the value of the expression $|E(S_q)| - |V(S_q)|$.
See the upcoming examples and \Cref{fig:examples}e for an illustration.
\end{remark}

\subsection{Examples}
\Cref{fig:examples} shows illustrative applications of the Main \Cref{thm:main} to several graphs.
For each row, the left-most column shows a graph $X$, while subsequent columns show all components $S_q$ for $q \in \{3,4,6,12\}$.
Each panel in each column (except for the first column) shows the geometric multiplicity of any unitary $\lambda$ with order $q$ (column) in $X$ (row). 
In each panel, gray nodes do not belong to any NB chain and golden nodes belong to multiple NB chains.
Other nodes are color-coded according to the unique NB chain they belong to.

\Cref{fig:examples}a shows a typical application of our results.
Note $X$ contains a copy of the complete graph on four nodes, $K_4$, glued to the golden node.
This subgraph could be replaced with any other graph $G$ and the results would be unchanged (unless of course $G$ contained further NB chains).

\Cref{fig:examples}b shows that a simple relocation of $K_4$ when compared to \Cref{fig:examples}a destroys many eigenfunctions but preserves others.
Even though both instances contain a cycle of length $12$, the chains that comprise the cycle have different lengths.

In \Cref{fig:examples}c, relocating $K_4$ again destroys some eigenfunctions and preserves others.
Note in this case $S_6$ is non-empty though it does not support any eigenfunctions for unitary eigenvalues of order $6$ because i) $q=6$ is even, ii) we have $r(S_6)=3$, iii) $q$ does not divide $r(S_6)$, and iv) $|E(S_6)| - |V(S_6)| = 0$.

The graph $X$ in \Cref{fig:examples}d has the same multiplicities as the graph $X$ in \Cref{fig:examples}a, even though the chains in these examples are different.
This is because even orders consider chains of length $q/2$.

\begin{figure}[!ht]
\centering
\includegraphics[width=0.985\textwidth]{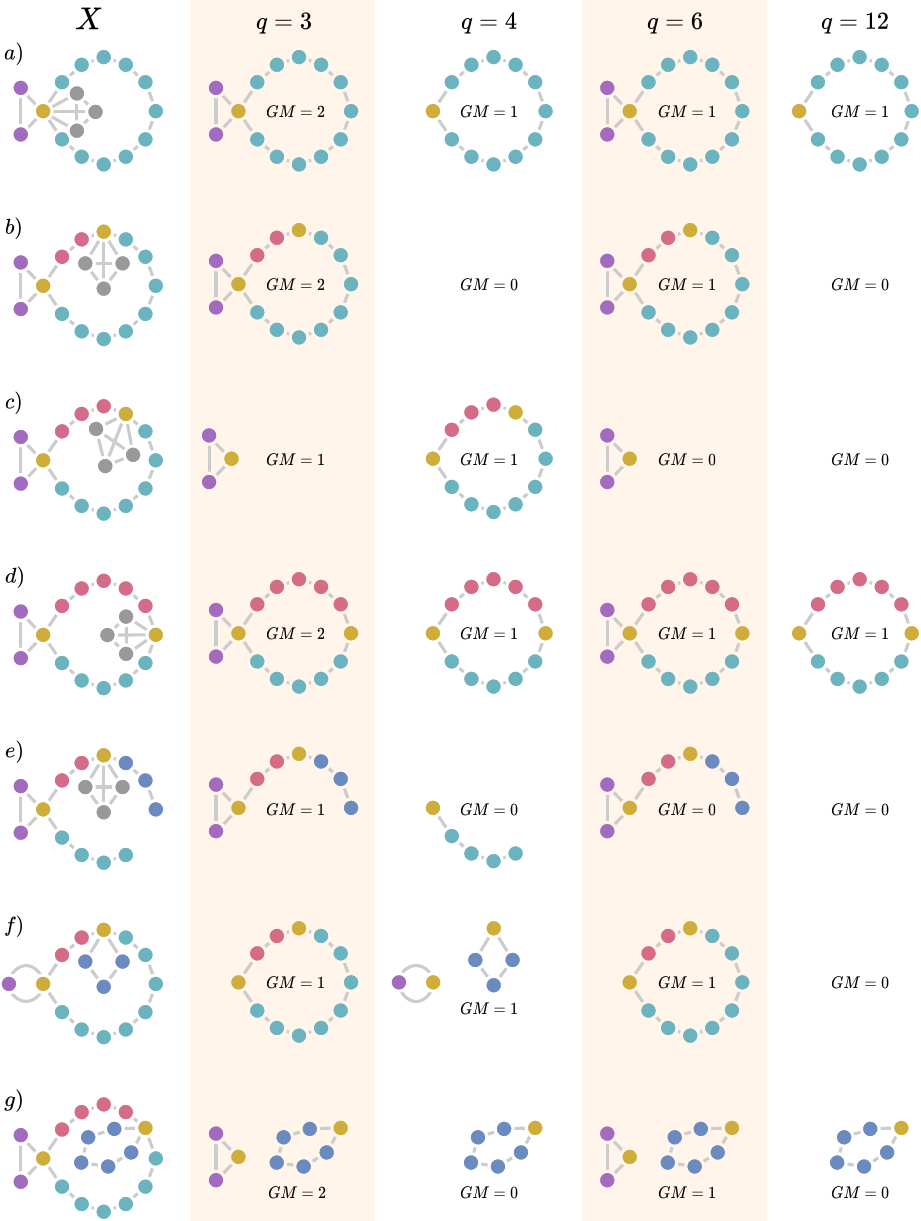}
\caption{\label{fig:examples}
Illustrative applications of the Main \Cref{thm:main}.}
\end{figure}

\Cref{fig:examples}e illustrates \Cref{rmk:main}.
Even when some $S_q$ contain nodes of degree $1$, the value of the expression $|E(S_q)| - |V(S_q)|$ never changes.

\Cref{fig:examples}f illustrates the case of multiple edges.
If two nodes share multiple edges and both nodes have degrees greater than $2$ (not pictured), then each of these multiple edges is not part of any NB chain with length greater than $1$ and thus does not affect the multiplicities of nonreal unitary eigenvalues.
If two nodes share exactly two edges, and furthermore one of those nodes has degree $2$, as pictured in \Cref{fig:examples}f, then we have a single closed NB chain of length $2$.
This chain appears as one of the components $S_4$.
However, this component does not support any eigenfunction since i) $q=4$ is even, ii) we have $r(S_4) = 2$, iii) $q$ does not divide $r(S_q)$, and iv) $|E(S_4)| - |V(S_4)| = 0$.
In this example, there is another component $S_4'$ which does support an eigenfunction. % because we have $r(S_4') = 4$ and $|E(S_4')| - |V(S_4')| + 1 = 1$.
The case of self-loops is similar (not pictured for space reasons): each self-loop counts as a closed NB chain of length $1$ and therefore does not affect the multiplicities of nonreal unitary NB eigenvalues.

\Cref{fig:examples}g is similar to \Cref{fig:examples}f though it shows that the case when there are multiple components for a single $q$ with different $r(S_q)$ is not unique to the existence of multi-edges.
Furthermore, the graph $X$ in \Cref{fig:examples}g also contains NB chains of length $5$ and $7$.
This means that $S_q$ is non-empty for $q \in \{ 5, 7, 10, 14 \}$.
However the expression $|E(S_q)| - |V(S_q)| + 1$ is always zero in these cases.

\subsection{Algorithm}

Note the formulas in \Cref{thm:main} do not involve a single matrix computation nor in fact computing the NB matrix itself.
\Cref{alg:main} computes these formulas efficiently.

\begin{algorithm}
\caption{Compute geometric multiplicity of unitary eigenvalue}
\label{alg:main}
\begin{algorithmic}[1]

\Function{ComputeGeometricMultiplicity}{$X, q$}

\State $\mathcal{C}_q \leftarrow$ \Call{GetChains}{$X, q$} \Comment Stage 1: get chains and initialization
\State $\overline{\mathcal{C}}_q \leftarrow$ graph whose edge set is $\mathcal{C}_q$
\State $\mathcal{S}_q \leftarrow$ \Call{ConnectedComponents}{$\overline{\mathcal{C}}_q$}
\State $m \leftarrow 0$
\If{$q$ is odd} \Comment Stage 2a: compute $m$ when $q$ is odd
\ForAll{$S_q \in \mathcal{S}_q$}
\State $m = m + |E(S_q)| - |V(S_q)| + 1$
\EndFor
\State \textbf{return} $m$
\Else \Comment Stage 2b: compute $m$ when $q$ is even
\ForAll{$S_q \in \mathcal{S}_q$}
\State $r(S_q) \leftarrow$ \Call{MaximumSubdivisionNumber}{$S_q$}
\If{$q$ divides $r(S_q)$}
\State $m = m + |E(S_q)| - |V(S_q)| + 1$
\Else \Comment $q/2$ divides $r(S_q)$
\State coll$(S_q, q) \leftarrow$ \Call{Collapse}{$S_q, q$}
\If{\Call{IsBipartite}{coll$(S_q, q)$}}
\State $m = m + |E(S_q)| - |V(S_q)| + 1$
\Else
\State $m = m + |E(S_q)| - |V(S_q)|$
\EndIf
\EndIf
\EndFor
\State \textbf{return} $m$
\EndIf
\EndFunction
\end{algorithmic}
\end{algorithm}

To discuss the time complexity of \Cref{alg:main}, let us first discuss the auxiliary functions used.
The function \textproc{ConnectedComponents}($X$) returns a collection of connected components of $X$, while the function \textproc{IsBipartite}($X$) returns $\textproc{True}$ if $X$ is bipartite and $\textproc{False}$ otherwise.
Both these algorithms are well known and can be computed in a single depth-first traversal of the graph.

The function \textproc{GetChains}($X, q$) returns a collection of NB chains of $X$ whose length is a multiple of $q$ or $q/2$.
This can be achieved in a single depth-first traversal of the graph, by keeping track of the current number of consecutive times that a node of degree $2$ was observed.
When a node of degree greater than $2$ is observed, the length of the current chain, if any, is checked for divisibility against $q$ and $q/2$.
If the check is successful, the current chain is stored, otherwise it is discarded.

The function \textproc{MaximumSubdivisionNumber}($X$) computes the maximum integer $r$ such that $X$ is the $r$-subdivision of some graph.
This is done by first obtaining the NB chains in $X$ while keeping track of the lengths of each chain found, and then computing the greatest common divisor of all lengths.

The function \textproc{Collapse}($X, q$) returns $\coll(X, q)$, the graph whose $q$-subdivision is $X$.
This is done in a single depth-first traversal of $X$, by adding a new edge to $\coll(X, q)$ for every $q$ edges of $X$ visited.

\begin{theorem}
Fix a graph $X$ with $m$ edges and $n$ nodes, and an integer $q>2$.
Let $k$ be the number of NB chains in $X$ whose lengths divide $q$ or $q/2$.
\Cref{alg:main} computes formulas \eqref{eqn:main-odd} and \eqref{eqn:main-even} in $O(m + k \log^2 m)$ time.
\end{theorem}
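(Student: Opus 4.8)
The plan is to bound the cost of each of the three stages of \Cref{alg:main} separately and then add the bounds. Throughout I would use that $X$ is connected, so $m \ge n-1$ and any $O(m+n)$ term is $O(m)$; I would also use that every chain counted by $k$ has length at most $m$, hence is an integer with $O(\log m)$ bits, and that $\mathcal{S}_q$ has at most $k$ components because each component of $\overline{\mathcal{C}}_q$ contains at least one chain.

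For Stage 1 I would argue that \textproc{GetChains}$(X,q)$ is one depth-first traversal spending $O(1)$ per edge — maintaining the length of the current run of degree-$2$ nodes, and emitting a chain after a divisibility check against $q$ and $q/2$ whenever a node of degree greater than $2$, or the closing point of a closed chain, is reached — so it costs $O(m)$; that $\overline{\mathcal{C}}_q$ is the union of the edge sets of the chains in $\mathcal{C}_q$ and hence has at most $m$ edges, so building it and running \textproc{ConnectedComponents} on it are each $O(m)$. Thus Stage 1 runs in $O(m)$.

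For Stage 2a ($q$ odd) I would note the algorithm makes a single pass over $\mathcal{S}_q$, and since the components of $\overline{\mathcal{C}}_q$ partition its vertex and edge sets, $\sum_{S_q}(|E(S_q)| - |V(S_q)| + 1) = O(m+k) = O(m)$, so this stage is $O(m)$ and the returned value is formula \eqref{eqn:main-odd}. For Stage 2b ($q$ even) the same bookkeeping is $O(m)$; additionally \textproc{Collapse} and \textproc{IsBipartite} are single traversals of graphs with $O(|E(S_q)|)$ edges, so they contribute $O(m)$ in total. The one genuinely new cost is \textproc{MaximumSubdivisionNumber}: for a component assembled from $k_i$ chains it amounts to $k_i-1$ pairwise greatest-common-divisor computations on integers at most $m$, and by the Euclidean algorithm each such gcd uses $O(\log m)$ division steps on $O(\log m)$-bit integers, i.e. $O(\log^2 m)$ bit operations; the test $q \mid r(S_q)$ is one more division of the same cost. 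Since $\sum_i k_i = k$, these arithmetic operations sum to $O(k\log^2 m)$, and the grand total is $O(m) + O(m) + O(m + k\log^2 m) = O(m + k\log^2 m)$, giving formula \eqref{eqn:main-even}.

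The hard part — really the only non-routine point — will be justifying the claim that \textproc{GetChains} can be realised as a single linear-time traversal that correctly enumerates \emph{all} NB chains of length a multiple of $q$ or $q/2$: this requires handling closed chains, length-$1$ chains between two high-degree neighbours, and vertices lying on several chains (the golden vertices of \Cref{fig:examples}), while recording for each chain the length data later consumed by \textproc{MaximumSubdivisionNumber} and \textproc{Collapse}. Everything after that is a straightforward accounting of a constant number of depth-first traversals plus one gcd per chain. I would also remark that the factor $\log^2 m$ rather than $\log m$ comes solely from charging bit-complexity to the Euclidean algorithm, and that the bound improves to $O(m + k\log m)$ in a word-RAM model where integers up to $m$ fit in a single machine word.
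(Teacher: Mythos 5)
Your proposal is correct and follows essentially the same route as the paper: a stage-by-stage accounting in which every graph operation (\textproc{GetChains}, \textproc{ConnectedComponents}, \textproc{Collapse}, \textproc{IsBipartite}, and the $|E(S_q)|,|V(S_q)|$ bookkeeping) is a constant number of depth-first traversals costing $O(m)$, while the per-chain integer arithmetic (divisibility tests and gcds via the Euclidean algorithm) contributes $O(k\log^2 m)$. The only cosmetic difference is that the paper charges the $O(\log^2 m)$ divisibility checks already inside \textproc{GetChains} (so Stage 1 is $O(m+k\log^2 m)$ rather than $O(m)$), whereas you fold all arithmetic into Stage 2b; this does not change the final bound, and your closing remark about the word-RAM improvement matches the paper's.
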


\begin{proof}
Let $p_1, \ldots, p_k$ be the lengths of the chains in $X$ whose lengths divide $q$ or $q/2$.
In the following we make the natural assumptions $q < m$ and $n < m$, and note $p_j < \sum_i p_i < m$ for each $j$.
Let us analyze \Cref{alg:main} line by line.

Line 2 requires a depth-first traversal, whose worst case scenario time complexity is $O(m + n)$.
Additionally, Line 2 performs two integer divisions per chain.
Using the Euclidean algorithm for integer division on a chain with length $p$, this is upper bounded by $O\left( (\log p + \log q)^2 \right)$; see \cite{knuth2014art,mathworld}.
Under our assumptions, this is $O(\log^2 m)$.
Thus the total complexity of Line 2 is upper bounded by $O(m + n + k \log^2 m)$.

Line 4 performs a depth-first traversal of $\overline{\mathcal{C}}_q$, which is always faster than a traversal of $X$ and is thus upper bounded by $O(m + n)$.

Line 6 takes constant time, as determining whether an integer is even can be done by looking at its last binary digit.

Lines 8, 14, 18, and 20 compute $|E(S_q)|$ and $|V(S_q)|$, both of which can be computed in a single graph traversal taking $O(|E(S_q)| + |V(S_q)|)$ time.
Since each of these lines is executed only once for each $S_q$, when the algorithm finishes running, the total time it takes to run these lines equals $O(|E(\overline{\mathcal{C}}_q)| + |V(\overline{\mathcal{C}}_q)|)$, upper bounded again by $O(m + n)$.

Line 12 computes the chains in $S_q$, taking $O(|E(S_q)| + |V(S_q)|)$ time, and then computes the greatest common divisor among the lengths of said chains.
As it must perform this once for each $S_q$, the total time taken is upper bounded by $O(m + n + \sum_i (\log p_i + \log q)^2) = O(m + n + k \log^2 m)$.

Line 13 takes $O((\log r(S_q) + \log q)^2)$.
Accounting for each component $S_q$, this yields $O(k \log^2 m)$.

Lines 16 and 17 take $O(|E(S_q)| + |V(S_q)|)$ time each, for a total of $O(m + n)$ when the algorithm finishes running.

All other lines take constant time.
The assumption $n < m$ yields the result.
\end{proof}

\begin{remark*}
Note that if the numbers $q, p_i, m$ fit in a single-precision variable, the time complexity for integer division becomes upper bounded by $O(\log m)$ instead of $O(\log^2 m)$.
Furthermore, when the number of nodes of degree $2$ in $X$ is small compared to $m$, or equivalently when $k \ll m$, the time complexity is essentially linear for large enough $m$.
\end{remark*}

%%%%%%%%%%%%%%%%%%%%%%%%%%%%%%%%%%%%%%%%%%%%%%%%%%%%%%%%%%%%%%%%
\appendix
\numberwithin{equation}{section}
\section{Proof of equation (4.1)}
\label{app:indefinite}

Here we prove the forward direction of Equation \eqref{eqn:bbt} using the theory of indefinite linear algebra \cite{gohberg2005indefinite}. Throughout, let $G$ be a graph with NB matrix $\mathbf{B}$ with $\mathbf{B} \mathbf{v} = \lambda \mathbf{v}$ where $\lambda$ is nonreal.

\subsection*{The flip matrix and indefinite inner product}

The NB matrix $\mathbf{B}$ possesses a particular symmetry. There exists an operator $\mathbf{P}$ such that $\mathbf{B}\mathbf{P} = \mathbf{P}\mathbf{B}^*$.  We call $\mathbf{P}$ \emph{the flip operator}. The flip is defined as $(\mathbf{P}\mathbf{v})_{k \to l} = \mathbf{v}_{l \to k}$, for any function $\mathbf{v}$.  In other words, $\mathbf{P}$ acts by flipping the orientation of each directed edge. When the rows and columns of $\mathbf{B}$ are ordered in the usual way (first $m$ components corresponding to one orientation, second $m$ components to the opposite orientation), we can write
\begin{equation}
\mathbf{P} = \begin{pmatrix}
\mathbf{0} & \mathbf{I} \\
\mathbf{I} & \mathbf{0}
\end{pmatrix},
\end{equation}
where $\mathbf{0}$ and $\mathbf{I}$ are the $m \times m$ zero and the identity matrix, respectively.

Direct computation shows that $\mathbf{P}$ is involutory, invertible, Hermitian, and unitary, which implies that it defines a so-called indefinite inner product.  For two functions $\mathbf{x}, \mathbf{y} \in \mathbb{C}^{2m}$,
we have
\begin{equation}
[\mathbf{x}, \mathbf{y}]_\mathbf{P} := \mathbf{x}^{\top} \mathbf{P} \overline{\mathbf{y}}
= \sum_{i \to j} \mathbf{x}_{i \to j} \, \overline{\mathbf{y}_{j \to i}}.
\end{equation}

The following result is Theorem 4.2.4 of \cite{gohberg2005indefinite}.

\begin{theorem}[Gohberg-Lancaster-Rodman]
\label{thm:glr}
Let $\mathbf{A}$ be an $\mathbf{H}$-selfadjoint matrix (i.e., $\mathbf{A}^* =
\mathbf{H}^{-1}\mathbf{A}\mathbf{H}$ for some invertible Hermitian $\mathbf{H}$) and let
$\lambda, \mu$ be two distinct eigenvalues of $\mathbf{A}$. Then the root subspaces
corresponding to $\lambda$ and $\overline{\mu}$ are $\mathbf{H}$-orthogonal. In
particular, if $\lambda$ is nonreal, its root subspace is $\mathbf{H}$-neutral.
\end{theorem}

Since $\mathbf{B}\mathbf{P} = \mathbf{P}\mathbf{B}^*$, the matrix $\mathbf{B}$ is
$\mathbf{P}$-selfadjoint. Therefore, Theorem \ref{thm:glr} applies with $\mathbf{H} =
\mathbf{P}$.

\subsection*{The key lemma}

\begin{lemma}
\label{lem:key}
Let $\mathbf{B}\mathbf{v} = \lambda \mathbf{v}$ with nonreal $\lambda$. Then
\begin{equation}
\left( \left|\lambda\right|^2 - 1 \right) \mathbf{v}^* \mathbf{v}
= \sum_j \left|\vec{\mathbf{v}}^j\right|^2 \left( d_j - 2 \right).
\end{equation}
\end{lemma}

\begin{proof}
Since $\lambda$ is nonreal, Theorem \ref{thm:glr} implies that
$[\mathbf{v}, \mathbf{v}]_\mathbf{P} = 0$. We have
\begin{equation}
\label{eq:star}
0 = [\mathbf{v}, \mathbf{v}]_\mathbf{P}
= \mathbf{v}^\top \mathbf{P} \overline{\mathbf{v}}
= \sum_{i \to j} \mathbf{v}_{i \to j} \overline{\mathbf{v}}_{j \to i}
=: \star.
\end{equation}

The eigenvalue equation $\mathbf{B}\mathbf{v} = \lambda \mathbf{v}$, gives $\lambda \mathbf{v}_{k \to l} = \vec{\mathbf{v}}^k - \mathbf{v}_{l \to k}$.  We can now rewrite $\star$ as
\begin{align}
\star
= \sum_{i \to j} \overline{\mathbf{v}}_{j \to i}
   \left( \vec{\mathbf{v}}^j - \lambda \mathbf{v}_{j \to i} \right) 
= \sum_j \vec{\mathbf{v}}^j \sum_i a_{ij} \overline{\mathbf{v}}_{j \to i}
   - \lambda \sum_{i \to j} \overline{\mathbf{v}}_{j \to i} \mathbf{v}_{j \to i} 
= \sum_j \vec{\mathbf{v}}^j \overline{\vec{\mathbf{v}}_j}
   - \lambda \mathbf{v}^* \mathbf{v}.
\end{align}

Taking the complex conjugate and observing that $\mathbf{v}^* \mathbf{v} \in \mathbb{R}$,
we obtain
\begin{equation}
\label{eq:starstar}
0 = \sum_j \overline{\vec{\mathbf{v}}^j} \vec{\mathbf{v}}_j
  - \overline{\lambda} \mathbf{v}^* \mathbf{v}.
\end{equation}

By Lemma 4.2, we have $(d_j - 1) \vec{\mathbf{v}}^j = \lambda \vec{\mathbf{v}}_j$ for
each node $j$. Substituting into equation \eqref{eq:starstar} yields
\begin{equation}
0 = \sum_j \overline{\vec{\mathbf{v}}^j} \lambda^{-1} (d_j - 1) \vec{\mathbf{v}}^j
  - \overline{\lambda} \mathbf{v}^* \mathbf{v}
= \lambda^{-1} \sum_j \left|\vec{\mathbf{v}}^j\right|^2 (d_j - 1)
  - \overline{\lambda} \mathbf{v}^* \mathbf{v}.
\end{equation}

On the other hand, we can also rewrite equation \eqref{eq:star} as
\begin{align}
\star
&= \sum_{i \to j} \overline{\mathbf{v}}_{j \to i} \lambda^{-1}
   \left( \vec{\mathbf{v}}^i - \mathbf{v}_{j \to i} \right) \\
&= \lambda^{-1} \sum_i \vec{\mathbf{v}}^i \sum_j a_{ij} \overline{\mathbf{v}}_{j \to i}
   - \lambda^{-1} \sum_{i \to j} \overline{\mathbf{v}}_{j \to i} \mathbf{v}_{j \to i} \\
&= \lambda^{-1} \sum_i \vec{\mathbf{v}}^i \overline{\vec{\mathbf{v}}^i}
   - \lambda^{-1} \mathbf{v}^* \mathbf{v}.
\end{align}

Therefore,
\begin{equation}
\label{eq:sum-into}
\sum_i \left|\vec{\mathbf{v}}^i\right|^2 = \mathbf{v}^* \mathbf{v}.
\end{equation}

Multiplying by $\lambda$ and subtracting \eqref{eq:sum-into}, we obtain
\begin{equation}
0 = \sum_j \left|\vec{\mathbf{v}}^j\right|^2 (d_j - 1)
  - |\lambda|^2 \mathbf{v}^* \mathbf{v}
  - \sum_j \left|\vec{\mathbf{v}}^j\right|^2
  + \mathbf{v}^* \mathbf{v},
\end{equation}
which simplifies to the desired result.
\end{proof}

\subsection*{Proof of equation (4.1)}

\begin{proposition}
\label{prop:unitary-nonleaky}
Let $\mathbf{B}\mathbf{v} = \lambda \mathbf{v}$ with $|\lambda| = 1$. Then
$\mathbf{B}^*\mathbf{B}\mathbf{v} = \mathbf{B}\mathbf{B}^*\mathbf{v} = \mathbf{v}$.
\end{proposition}

\begin{proof}
By Lemma \ref{lem:key}, we have
\begin{equation}
0 = \left( |\lambda|^2 - 1 \right) \mathbf{v}^* \mathbf{v}
  = \sum_j \left|\vec{\mathbf{v}}^j\right|^2 (d_j - 2).
\end{equation}

Since each term in the sum is non-negative, we must have
$\left|\vec{\mathbf{v}}^j\right|^2 (d_j - 2) = 0$ for each node $j$. Therefore,
$(d_j - 2) \vec{\mathbf{v}}^j = 0$ for all $j$.

By Lemma 4.1, we have
\begin{equation}
(\mathbf{B}^*\mathbf{B}\mathbf{v})_{k \to l}
= (d_l - 2) \vec{\mathbf{v}}^l + \mathbf{v}_{k \to l}
= 0 + \mathbf{v}_{k \to l}
= \mathbf{v}_{k \to l}.
\end{equation}

Similarly, using Lemma 4.2, which shows that $(d_k - 2) \vec{\mathbf{v}}^k = 0$ implies
$(d_k - 2) \vec{\mathbf{v}}_k = 0$, and again by Lemma 4.1, we have
$\mathbf{B}\mathbf{B}^*\mathbf{v} = \mathbf{v}$.
\end{proof}

%%%%%%%%%%%%%%%%%%%%%%%%%%%%%%%%%%%%%%%%%%%%%%%%%%%%%%%%%%%%%%%%

%%%%%%%%%%%%%%%%%%%%%%%%%%%%%%%%%%%%%%%%%%%%%%%%%%%%%%%%%%%%%%%%
\section*{Acknowledgments}
This work started while the author was at Northeastern University’s Network Science Institute and supported in part by NSF IIS-1741197.
It continued while the author was at the Max Planck Institute for Mathematics in the Sciences.
The author thanks Gabor Lippner and Tina Eliassi-Rad for many invaluable conversations, and Blevmore Labs for consulting services regarding the construction of Figures \ref{fig:nbm-doodle} and \ref{fig:bbt}.

%%%%%%%%%%%%%%%%%%%%%%%%%%%%%%%%%%%%%%%%%%%%%%%%%%%%%%%%%%%%%%%%
% \cleardoublepage
% \phantomsection
\addcontentsline{toc}{section}{References}
\bibliographystyle{plain}
\bibliography{references}
\end{document}